\newtheorem{teo}{Theorem}[section]
\newtheorem{prop}[teo]{Proposition}
\newtheorem{corol}[teo]{Corollary}
\theoremstyle{definition}
\newtheorem{defi}[teo]{Definition}
\theoremstyle{remark}
\newtheorem{oss}[teo]{Remark}
\newcommand{\R}{\mathbb{R}}
\newcommand{\elle}{\mathcal{L}}
\DeclareMathOperator{\spann}{span}
\DeclareMathOperator{\ext}{Ext}
\lbrace\begin{array}{@{}l@{}}}%
\begin{document}
\selectlanguage{english}%

\title{\bf{Nonlocal Venttsel' diffusion in fractal-type domains: regularity results and numerical approximation}}
\author{ Massimo Cefalo\footnote{Dipartimento di Ingegneria Informatica, Automatica e Gestionale,
	Universit\`{a} degli Studi di Roma "La Sapienza", Via Ariosto 25, 00185 Roma, Italy. E-mail: cefalo.m@gmail.com}, Simone Creo\thanks{Dipartimento di Scienze di Base e Applicate per l'Ingegneria,  Sapienza, Universit\`{a}  di Roma
    Via A. Scarpa 16, 00161 Roma, Italy.
   E-mail: simone.creo@sbai.uniroma1.it, mariarosaria.lancia@sbai.uniroma1.it},
Maria Rosaria Lancia$^\dag$, Paola Vernole\footnote{ Dipartimento di Matematica, Universit\`{a} degli Studi di Roma "La Sapienza",
    P.zale Aldo Moro 2,   00185 Roma, Italy. E-mail vernole@mat.uniroma1.it}
		}
\date{}
\maketitle

\begin{abstract}

\noindent We study a nonlocal Venttsel' problem in a non-convex bounded domain with a Koch-type boundary. Regularity results of the strict solution are proved in weighted Sobolev spaces. The numerical approximation of the problem is carried out and optimal a priori error estimates are obtained.
\end{abstract}

\noindent\textbf{Keywords:} Nonlocal problems, Venttsel' boundary conditions, Koch snowflake domain, Regularity results, Weighted Sobolev spaces, Finite Element Method, Numerical approximation.\\

\noindent{\textbf{AMS Subject Classification:} 35K05, 65M60, 65M15, 35K20.}

 \pagestyle{myheadings} \thispagestyle{plain} \markboth{M. CEFALO, S. CREO, M. R. LANCIA AND P. VERNOLE}{Nonlocal Venttsel' diffusion in fractal-type domains}

\section*{Introduction}

\noindent In this paper we study a nonlocal Venttsel' problem in a non-convex bounded domain with a Koch-type boundary.\\
Recently there has been an increasing interest towards the study of Venttsel' problems in fractal domains due to the different framework in
which they appear, e.g. engineering problems of idraulic fracturing, water wave theory as well as  models of heat transfer    (see 	\cite{canmey}, \cite{sanpal}, \cite{Kor} and \cite{Shim}).\\
In the framework of heat propagation, a Venttsel' problem models the heat flow across highly conductive thin boundaries (see \cite{Fa-La-Le-Ma}). From the point of view of applications it is important to consider models in which the surface effects are enhanced with respect to the surrounding volume. Fractal boundaries or interfaces can be a useful tool to describe such situation. It has to be pointed out that many physical and industrial processes lead to the formation of irregular surfaces or occur across them which can be conveniently modeled as prefractal interfaces.\\
The literature on linear and quasi-linear Venttsel' problems in both regular and irregular domains is huge (see e.g. \cite{Ar-Me-Pa-Ro}, \cite{Fa-G-G-Ro}, \cite{velezjfa14}, \cite{WAR12-2}, \cite{lanver2} and the references listed in). In all these papers it is addressed the study of existence and uniqueness properties of the solution by a semigroup approach. Only recently in \cite{lanver2} and in \cite{nostro} the asymptotic behavior of the solution of local linear and quasi-linear prefractal Venttsel' problems to the limit fractal ones has been investigated (respectively).\\
A Venttsel' problem is described mathematically by a heat equation in the bulk coupled with a heat equation on the boundary. Due to this
unusual boundary condition, Venttsel' problems are also known as BVPs with dynamical boundary conditions since the time derivative appears also in the boundary condition.\\
Recently there has been a growing attention on the study of nonlocal Venttsel' problems both in regular and irregular domains in $\R^n$ (see \cite{WAR12}, \cite{AVS-WAR}, \cite{velezjfa15}, \cite{LVSV} and the references listed in). The presence of the nonlocal term in the boundary condition, in the framework of heat flow, accounts for a non-constant conductivity $K(x,y)$ on the boundary which scales according to a certain law:

$$ \frac{k^{-1}}{|x-y|^{n+2s}}\leq K(x,y)\leq \frac{k}{|x-y|^{n+2s}}$$
where $s\in [0,1]$ and $k$ is a positive constant (see \cite{bass-Levin} for more details and for a probabilistic interpretation of the associated process).\\
It has to be pointed out that a nonlocal term appears already in the original paper of Venttsel' \cite{vent59}. In any case a nonlocal term is important in all those diffusion models in which one wants to emphasize the interaction between the boundary and the bulk such as e.g. in the diffusion of sprays in the lungs.\\
In this paper we consider a parabolic nonlocal Venttsel' problem $(P)$ in a two-dimensional domain $\Omega$ with a Koch-type prefractal boundary and its numerical approximation, see \eqref{astratto}.\\
As far as we know, this is the first example of a numerical approximation of a nonlocal Venttsel' boundary value problem in a prefractal domain. The numerical approximation of second order transmission problems across prefractal interfaces have been considered in \cite{lancef} and in \cite{haodong}. This type of problems exhibits on the interface a local Venttsel'-type boundary condition. In all the mentioned cases, once the existence and uniqueness of the strict solution is proved, the regularity (of the solution) plays a key role in the error estimates as well as a suitable refined mesh near the singular vertices. Indeed, since the prefractal domains are not convex due to the presence of the reentrant angles, the regularity of the solution is deteriorated.\\
In the nonlocal case, in order to prove key regularity results for the strict solution of problem $(P)$, a completely different approach (with respect to the case of local type Venttsel' boundary conditions) must be used due to the presence of the nonlocal term, which can be regarded as a sort of "regional" fractional Laplacian of order $\frac{1}{2}$.\\
Namely, we prove that the solution belongs to a suitable weighted Sobolev space. This allows us to use a convenient mesh algorithm developed in \cite{cefalolancia} compliant to suitable conditions introduced by Grisvard in \cite{grisvard} which allow us to achieve an optimal rate of convergence.\\
The numerical approximation of problem $(P)$ is carried out in two steps. We first triangulate the domain $\Omega$ with our mesh algorithm. We construct the semi-discrete problem by discretizing in space. Secondly the fully discrete problem is obtained by applying the $\theta$-method to the time-dependent variables.\\
We obtain stability and convergence results as in the classical case, where the solution has $H^2$ regularity. It is worthwhile noticing that our results can be straightforward extended to the more general case of domains with boundaries of Koch type fractal mixtures as in \cite{haodong}.\\
We achieve also some preliminary numerical results. We study the heat flow across a prefractal boundary where the nonlocal term is active only on a portion of its. As shown in Figures \ref{fig:streamlines} and \ref{fig:streamlines3D}, the nonlocal term is responsible of a larger heat flux in the part of the boundary where it is active. From the point of view of the applications, this fact turns out to be important to drain or increase the heat in \textsl{a priori} fixed areas.

\noindent The plan of the paper is the following. In Section \ref{sec1} we recall the preliminaries on the Koch curve and the main functional spaces. In Section \ref{sec2} we state the main properties of the energy functional. In Section \ref{secapriori} we prove a priori estimates in weighted Sobolev spaces. In Section \ref{sec3} we consider the abstract Cauchy problem and we prove existence and uniqueness results for the strict solution of the nonlocal Venttsel' problem in the prefractal domain $\Omega$. In Section \ref{sec4} we prove regularity results in fractional Sobolev spaces. In Section \ref{sec5} we prove a priori error estimates for the semi-discrete and fully discrete problem. Finally, in Section \ref{sec6} we present some numerical results and conclusions.

\section{Preliminaries}\label{sec1}
\setcounter{equation}{0}

\noindent In the paper  we denote by $x=(x_1,x_2)$ points in $\R^2$. Let $A_1$, $A_3$ and $A_5$ be the vertices of a regular triangle with unit side length, i.e. $|A_1-A_3|=|A_1-A_5|=|A_3-A_5|=1$. We define a family $\Psi^{1}$ of four suitable contractions $\psi_{1}^{(1)},...,\psi_{4}^{(1)}$, with respect to the same ratio $\frac{1}{3}$ (see \cite{freiberg}). Let $V_0^{(1)}:=\lbrace A_1,A_3 \rbrace$, $\psi_{i_1\dots i_n}:=\psi_{i_1}\circ\dots\circ\psi_{i_n}$, $V_{i_1\dots i_n}^{(1)}:=\psi_{i_1\dots i_n}^{(1)}(V_0^{(1)})$ and
	\begin{center}
	$V_n^{(1)}:=\bigcup\limits_{i_1\dots i_n=1}^4 V_{i_1\dots i_n}^{(1)}$.
	\end{center}
Now let $K_1^{(0)}$ denote the unit segment whose endpoints are $A_1$ and $A_3$ and let $K^{(1)}_{i_1,\dots i_n}:=\psi_{i_1\dots i_n} (K_1^{(0)})$. For $n>0$, we denote
\begin{center}
$\displaystyle F^n_{(1)}=\left\{\psi_{i_1\dots i_n} (K_1^{(0)})\,,\,i_1,\dots,i_n=1,\dots,4\right\}$.
\end{center}
Now we set $\displaystyle K_1^{(1)}=\bigcup_{i=1}^4 \psi_i(K_1^{(0)})$, $\displaystyle K_1^{(n+1)}=\bigcup_{M\in F^n_{(1)}}\bigcup_{i=1}^4 \psi_i(M)$, where $M$ denotes a segment of the $(n+1)$-th generation, $K_1^{(n+1)}$ the polygonal curve and $V_{n+1}^{(1)}$ the set of its vertices.\\
We can repeat this argument on the unit segments $K_2^{(0)}$ having as endpoints $A_3$ and $A_5$ and $K_3^{(0)}$ having as endpoints $A_1$ and $A_5$. In the following we denote by
\begin{equation}\label{eq:3bitris}
K_{n+1}=\displaystyle\bigcup_{i=1}^3 K_i^{(n+1)}
\end{equation}
the closed polygonal curve approximating the Koch snowflake at the $(n+1)$-th step. For the properties of the Koch snowflake we refer to \cite{falconer}.
\begin{figure}
	\centering
		\includegraphics[width=0.60\textwidth]{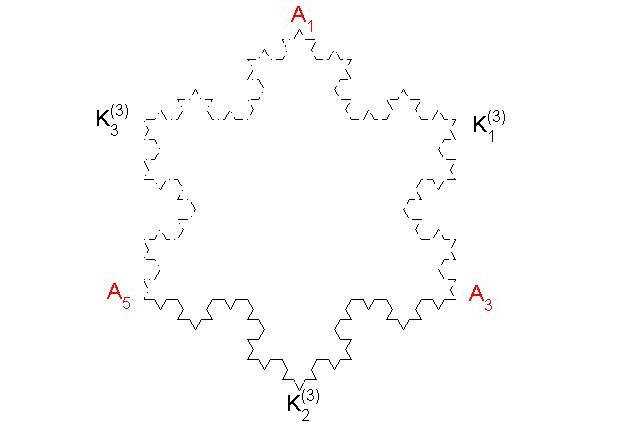}
	\caption{The domain $\Omega_n$ for $n=3$.}
	\label{dominio}
\end{figure}
\noindent By $\Omega_n$ we denote the open bounded non-convex domain in $\mathbb{R}^2$ with polygonal boundary $\partial\Omega_n=K_n$ with vertices $P_j$ for $j=1,\dots,3\mathcal{N}$, where $\mathcal{N}=4^n$. Since $n$ in this paper will be fixed, from now on we will omit the subscript $n$ in $\Omega_n$.\\
For each fixed $P_j$, we denote by $\eta_j$ the interior angle of $\Omega$ at $P_j$. Let $R=\{1\leq j\leq 3\mathcal{N} \,:\,\eta_j>\pi\}$. The set $\{P_j\}_{j\in R}$ is the subset of vertices whose angles are "reentrant".

\medskip
\noindent In the following we will denote the set of continuous functions on $\Omega$ by $C^0 (\Omega)$, the set of infinitely differentiable functions with compact support in $\Omega$ by $C^{\infty}_0 (\Omega)$ and by $L^2(\cdot)$ the usual Lebesgue space of square summable functions.\\
We define the Sobolev spaces on $K_n$. By $H^s(K_n)$, for $0<s\leq 1$, we denote the Sobolev space on $K_n$ defined by local Lipschitz charts as in~\cite{necas}. For $s\geq 1$, we define the Sobolev space $H^s(K_n)$ by using the characterization given by Brezzi-Gilardi in~\cite{bregil}:
\begin{center}
$H^s (K_n)=\{v\in C^0 (K_n)\,|\, v|_{\overset{\circ}{M}}\in H^s (\overset{\circ}{M})\}$,
\end{center}
where $M$ denotes a side of $K_n$ and $\overset{\circ}{M}$ denotes the corresponding open segment. In the following we denote by $dx$ the bidimensional Lebesgue measure and by $ds$ the arc length on $K_n$.\\
We define the trace of $u$ on $K_n$ in the following way.
\begin{defi}
Let $D\subset\mathbb{R}^2$ be an open set and $f\in H^s (D)$. We define the trace operator $\gamma_0$ as
\begin{center}
$\displaystyle\gamma_0 f(x):=\lim_{r\to 0} \frac{1}{|B_r (x)\cap D|}\,\int_{B_r (x)\cap D} f(y)\,dy$,
\end{center}
where $B_r (x)$ denotes the Euclidean ball in $\mathbb{R}^2$.
\end{defi}
\noindent It is known that the above limit exists at \emph{quasi every} $x\in\overline{D}$ with respect to the $(s,2)$-capacity (see~\cite{adhei}).

\noindent In the following we will make use of this result (see Theorem 2.4 in~\cite{bregil}).
\begin{prop}\label{traccia}
Let $s>1/2$. Then $H^{s-\frac{1}{2}} (K_n)$ is the trace space of $H^s (\Omega)$, i.e.:
\begin{enumerate}
\item $\gamma_0$ is a linear and continuous operator from $H^s (\Omega)$ in $H^{s-\frac{1}{2}} (K_n)$;
\item there exists a linear and continuous operator $\ext$ from $H^{s-\frac{1}{2}} (K_n)$ in $H^s (\Omega)$ such that $\gamma_0\circ\ext$ is the identity operator of $H^{s-\frac{1}{2}} (K_n)$.
\end{enumerate}
\end{prop}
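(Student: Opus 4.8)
The plan is to prove both assertions by the standard localization-and-flattening technique, reducing the global trace problem on the polygonal curve $K_n$ to two model situations: the interior of a straight side, and a neighborhood of a single vertex $P_j$. Since $K_n$ consists of finitely many segments, I would fix a finite open cover of $\overline{\Omega}$ by one family of ``edge'' patches, each meeting $\partial\Omega$ in the interior of a single side $\overset{\circ}{M}$, and one family of ``vertex'' patches, each containing exactly one $P_j$, together with interior patches that do not touch the boundary. Choosing a subordinate partition of unity $\{\phi_k\}$, it suffices to prove continuity of $\gamma_0$ and to construct a bounded right inverse on each patch and then reassemble, since the $\phi_k$ are smooth and multiplication by a fixed $C^\infty$ function is bounded on all the $H^s$ scales involved.

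For the first assertion, on an edge patch a rigid motion maps the side into a segment of the $x_1$-axis and $\Omega$ locally onto the half-plane $\{x_2>0\}$; the classical half-space trace theorem then gives that $\gamma_0$ is bounded from $H^s$ into $H^{s-\frac12}(\overset{\circ}{M})$ for every $s>\tfrac12$. On a vertex patch $\Omega$ is locally a plane sector of opening $\eta_j$, and the trace onto the two rays issuing from $P_j$ again lands in $H^{s-\frac12}$ on each ray; when $s-\frac12>\frac12$ the one-dimensional Sobolev embedding forces these one-sided traces to be continuous up to $P_j$ and to share the same value there, which is exactly the matching condition encoded in the Brezzi--Gilardi description $H^{s-\frac12}(K_n)=\{v\in C^0(K_n): v|_{\overset{\circ}{M}}\in H^{s-\frac12}(\overset{\circ}{M})\}$. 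Summing over $k$ yields $\gamma_0\colon H^s(\Omega)\to H^{s-\frac12}(K_n)$ bounded. To see that this operator coincides with the averaged-limit $\gamma_0$ of the Definition above, I would invoke that for $u\in H^s(\Omega)$ the Lebesgue averages converge $(s,2)$-quasi everywhere (Adams--Hedberg), and that on the flat model this q.e.\ limit agrees with the classical boundary value.

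For the second assertion I would build $\ext$ by the same localization. Given $g\in H^{s-\frac12}(K_n)$, decompose $g=\sum_k\chi_k g$ with a partition of unity $\{\chi_k\}$ on $K_n$; on each flat piece lift $\chi_k g$ to a local $H^s$ function on the half-plane by the bounded right inverse of the half-space trace theorem, and near each vertex use a sector-adapted lifting. The continuity (and, for larger $s$, the higher-order compatibility) built into the definition of $H^{s-\frac12}(K_n)$ is precisely what guarantees that the two one-sided lifts at a vertex can be glued into a single function of class $H^s$ across $P_j$; patching with $\{\phi_k\}$ then produces $\ext g\in H^s(\Omega)$ with $\gamma_0\ext g=g$ and $\|\ext g\|_{H^s(\Omega)}\lesssim\|g\|_{H^{s-1/2}(K_n)}$, which gives $\gamma_0\circ\ext=\id$.

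The main obstacle is the behavior at the vertices, and in particular at the reentrant ones with $\eta_j>\pi$, where $\Omega$ is non-convex. Away from the corners everything is the flat half-space theory; the whole difficulty is to check that the trace space on the sector is exactly $H^{s-\frac12}$ on each side subject to the correct matching at $P_j$ --- neither larger nor smaller --- and that an extension can be produced that remains $H^s$ across the corner despite the opening exceeding $\pi$. This is the technical core, where compatibility conditions switch on as $s-\frac12$ crosses the thresholds $\frac12,\frac32,\dots$; it is exactly the content of Theorem 2.4 in \cite{bregil}, on which I would ultimately rely.
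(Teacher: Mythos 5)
Your outline is sound, but the paper does not actually prove this proposition: it is quoted verbatim from Theorem 2.4 in \cite{bregil}, which is exactly the reference you fall back on for the technical core, so your proposal and the paper ultimately rest on the same result. The localization-and-flattening sketch you give (edge patches via the half-space trace theorem, vertex patches via sector analysis and the matching condition in the Brezzi--Gilardi characterization of $H^{s-\frac{1}{2}}(K_n)$) is the standard route to that theorem and contains no gap at the level of detail presented.
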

\noindent We will use the same symbol to denote $u$ and its trace $\gamma_0 u$. The interpretation will be left to the context.\\
We now recall the Friedrichs inequality, see \cite[page 24]{mazya} for more details.
\begin{prop} Let $u\in H^1(\Omega)$. There exists a positive constant $C_p$ such that
\begin{equation}\label{poincare}
\|u\|^2_{L^2(\Omega)}\leq C_p \left(\|\nabla u\|^2_{L^2(\Omega)}+\|u\|^2_{L^2(K_n)}\right).
\end{equation}
\end{prop}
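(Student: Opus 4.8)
\noindent The plan is to argue by contradiction, reducing the estimate to a compactness argument that combines the Rellich--Kondrachov theorem with the continuity of the trace operator supplied by Proposition \ref{traccia}.

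First I would negate the claim: if \eqref{poincare} were false for every constant, then the choice $C_p=m$ would produce, for each $m\in\N$, a function $u_m\in H^1(\Omega)$ with
\begin{equation*}
\|u_m\|^2_{L^2(\Omega)} > m\left(\|\nabla u_m\|^2_{L^2(\Omega)}+\|u_m\|^2_{L^2(K_n)}\right).
\end{equation*}
Since the right-hand side is nonnegative, $u_m\neq 0$, so I may normalise $v_m:=u_m/\|u_m\|_{L^2(\Omega)}$, obtaining $\|v_m\|_{L^2(\Omega)}=1$ together with $\|\nabla v_m\|^2_{L^2(\Omega)}+\|v_m\|^2_{L^2(K_n)}<1/m$. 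In particular $\{v_m\}$ is bounded in $H^1(\Omega)$, while both $\|\nabla v_m\|_{L^2(\Omega)}\to 0$ and $\|v_m\|_{L^2(K_n)}\to 0$.

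Next I would exploit compactness. As $\Omega$ is a bounded domain whose boundary $K_n$ is a polygonal (hence Lipschitz) curve, the embedding $H^1(\Omega)\hookrightarrow L^2(\Omega)$ is compact; I extract a subsequence $v_{m_k}$ converging weakly in $H^1(\Omega)$ and strongly in $L^2(\Omega)$ to some $v\in H^1(\Omega)$. Strong $L^2$ convergence gives $\|v\|_{L^2(\Omega)}=1$, so $v\neq 0$, and the weak lower semicontinuity of the gradient norm together with $\|\nabla v_m\|_{L^2(\Omega)}\to 0$ forces $\nabla v=0$. Since $\Omega$ is connected, $v$ is therefore a nonzero constant.

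Finally I would rule this out using the boundary term. Applying Proposition \ref{traccia} with $s=1$, the trace operator $\gamma_0\colon H^1(\Omega)\to H^{1/2}(K_n)\hookrightarrow L^2(K_n)$ is linear and continuous, hence weakly continuous, so $\gamma_0 v_{m_k}\rightharpoonup\gamma_0 v$ in $L^2(K_n)$; by lower semicontinuity of the norm, $\|\gamma_0 v\|_{L^2(K_n)}\leq\liminf_k\|v_{m_k}\|_{L^2(K_n)}=0$. But the trace of the nonzero constant $v$ equals that constant on $K_n$, whose $L^2(K_n)$ norm is $|v|\,(\text{length of }K_n)^{1/2}>0$, a contradiction. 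Hence \eqref{poincare} holds for some $C_p>0$. The main obstacle is securing the two functional-analytic inputs on this specific geometry: the compact embedding, valid because $\partial\Omega=K_n$ is Lipschitz, and, more delicately, the trace continuity, which is precisely the content of Proposition \ref{traccia}. As an alternative avoiding compactness, I could split $u=\bar u+(u-\bar u)$ with $\bar u$ the mean of $u$ over $\Omega$, control $u-\bar u$ by $\|\nabla u\|_{L^2(\Omega)}$ via the Poincar\'e--Wirtinger inequality, and bound $|\bar u|$ by testing against the trace on $K_n$; this route yields \eqref{poincare} with an explicit constant.
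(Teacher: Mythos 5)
Your argument is correct. Note first that the paper does not actually prove this proposition: it simply cites Maz'ya's book for the Friedrichs inequality, so there is no in-text proof to compare against. Your contradiction-plus-compactness argument is the standard (Peetre--Tartar style) proof of inequalities of this type, and every ingredient you invoke is available in this setting: $\Omega$ is a bounded, connected domain with polygonal, hence Lipschitz, boundary $K_n$, so the Rellich--Kondrachov embedding $H^1(\Omega)\hookrightarrow L^2(\Omega)$ is compact, and the trace operator $\gamma_0\colon H^1(\Omega)\to H^{1/2}(K_n)\hookrightarrow L^2(K_n)$ is bounded (Proposition \ref{traccia}), hence weak-to-weak continuous, which is exactly what you need to pass the vanishing boundary norm to the limit. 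The identification of the trace of the constant limit $v$ with that constant is immediate from the Lebesgue-point definition of $\gamma_0$ given in the paper. The only features worth flagging are that the argument is non-constructive, so $C_p$ is not explicit and a priori depends on the domain (here harmless, since $n$ is fixed throughout the paper), and that connectedness of $\Omega$ is genuinely used to conclude that $\nabla v=0$ implies $v$ is constant; your sketched alternative via the Poincar\'e--Wirtinger decomposition $u=\bar u+(u-\bar u)$ would remedy the first point by producing an explicit constant.
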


\noindent Let $r$ be the distance from the set of vertices. For $\gamma\in\R$, we denote by $H^2_\gamma (\Omega)$ the weighted Sobolev space of functions such that the norm
\begin{center}
$\displaystyle\|u\|_{H^2_\gamma (\Omega)}=\left(\sum_{|\alpha|\leq 2} \int_{\Omega} r^{2(\gamma-2+|\alpha|)} |D^{\alpha} u(x)|^2\,dx\right)^{\frac{1}{2}}$
\end{center}
is finite, and by the space $H^{\frac{3}{2}}_\gamma (K_n)$ the trace space of $H^2_\gamma (\Omega)$ equipped with the norm
\begin{center}
$\displaystyle\|u\|_{H^{\frac{3}{2}}_\gamma (K_n)}=\inf_{v=u\,\text{on}\,K_n}\,\|v\|_{H^2_\gamma (\Omega)}$.
\end{center}
For the details see (2.17), Chapter 2 in \cite{nazplam}.\\ 
We denote by $L^2(\Omega,m)$ the Lebesgue space with respect to the measure $m$ with
\begin{center}
$dm=dx+ds$.
\end{center}
We define, for $\sigma\in\R$, the composite space
$$V^2_\sigma(\Omega,K_n):=\{u\in H^1 (\Omega)\,:\, r^\sigma D^2 u\in L^2(\Omega),\,\gamma_0 u\in H^2(K_n)\}.$$

\section{The energy functional}\label{sec2}
\setcounter{equation}{0}

\noindent Throughout the paper $C$ will denote possibly different constants.\\
Let $b$ be a positive continuous function on $\overline{\Omega}$. We set $\theta_2\colon H^{\frac{1}{2}} (K_n)\to H^{-\frac{1}{2}} (K_n)$ as follows: for every $u,v\in H^{\frac{1}{2}} (K_n)$
\begin{center}
$\displaystyle\langle\theta_2 (u),v\rangle_{H^{-\frac{1}{2}} (K_n), H^{\frac{1}{2}} (K_n)}=\iint_{K_n\times K_n}\frac{(u(x)-u(y))(v(x)-v(y))}{|x-y|^2}\,ds(x)\,ds(y)$.
\end{center}
From now on $\langle\cdot,\cdot\rangle$ will denote the duality pairing between $H^{-\frac{1}{2}} (K_n)$ and $H^{\frac{1}{2}} (K_n)$.\\
We define now the energy form $E$ as
\begin{equation}\label{defforma}
E[u]=E_{\Omega} [u]+E_{K_n} [u]+\int_{K_n} b\,|u|^2\,ds+\langle\theta_2 (u),u\rangle
\end{equation}
with domain
\begin{center}
$V(\Omega, K_n)=\{u\in H^1 (\Omega)\,:\, \gamma_0 u\in H^1 (K_n)\}$,
\end{center}
where $$E_{\Omega} [u]=\int_{\Omega}|\nabla u|^2\,dx$$ and $$E_{K_n}[u]=\int_{K_n}|\nabla_s u|^2\,ds.$$ Here $\nabla_s$ denotes the tangential derivative on $K_n$.\\
$V(\Omega,K_n)$ is a Hilbert space equipped with the norm
\begin{center}
$\displaystyle\|u\|_{V(\Omega,K_n)}=\left(\int_{\Omega}|\nabla u|^2\,dx+\int_{K_n}|\nabla_s u|^2\,ds+\|u\|^2_{L^2(\Omega,m)}\right)^{\frac{1}{2}}$.
\end{center}
We point out that the space $V(\Omega, K_n)$ is non-trivial.\\
In order to prove the coercivity of the energy form $E$, we introduce an equivalent norm on $V(\Omega,K_n)$, which is defined as
\begin{equation}\label{normeq}
|||u|||_{V(\Omega,K_n)}:=\left(\|\nabla u\|^2_{L^2(\Omega)}+\Phi(u)\right)^{\frac{1}{2}},
\end{equation}
where $\Phi(u):=\|u\|^2_{H^1(K_n)}+\langle\theta_2(u),u\rangle$.
\begin{prop} The norms $\|\cdot\|_{V(\Omega,K_n)}$ and $|||\cdot|||_{V(\Omega,K_n)}$ are equivalent.
\end{prop}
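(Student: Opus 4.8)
The plan is to expand both norms termwise and observe that they share every summand but one. With $dm=dx+ds$, the squared norm $\|u\|^2_{V(\Omega,K_n)}$ equals $\|\nabla u\|^2_{L^2(\Omega)}+\|\nabla_s u\|^2_{L^2(K_n)}+\|u\|^2_{L^2(\Omega)}+\|u\|^2_{L^2(K_n)}$, whereas by \eqref{normeq}, $|||u|||^2_{V(\Omega,K_n)}=\|\nabla u\|^2_{L^2(\Omega)}+\|u\|^2_{H^1(K_n)}+\langle\theta_2(u),u\rangle$ with $\|u\|^2_{H^1(K_n)}=\|u\|^2_{L^2(K_n)}+\|\nabla_s u\|^2_{L^2(K_n)}$. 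Hence the two expressions coincide on $\|\nabla u\|^2_{L^2(\Omega)}$, $\|\nabla_s u\|^2_{L^2(K_n)}$ and $\|u\|^2_{L^2(K_n)}$, and differ only in that $\|\cdot\|_V$ carries the bulk term $\|u\|^2_{L^2(\Omega)}$ while $|||\cdot|||_V$ carries the nonlocal form $\langle\theta_2(u),u\rangle$. The whole argument thus reduces to bounding each of these two terms by the opposite norm.

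For the estimate $|||u|||_{V(\Omega,K_n)}\le C\|u\|_{V(\Omega,K_n)}$ the only quantity to control is $\langle\theta_2(u),u\rangle$. By definition this quadratic form is precisely the squared Gagliardo seminorm of $u$ of order $\tfrac12$ on the one-dimensional curve $K_n$, the kernel $|x-y|^{-2}$ being exactly $|x-y|^{-(1+2\cdot 1/2)}$, so that $\langle\theta_2(u),u\rangle\le\|u\|^2_{H^{1/2}(K_n)}$. Using the continuous embedding $H^1(K_n)\hookrightarrow H^{1/2}(K_n)$ — equivalently the continuity of $\theta_2$ from $H^{1/2}(K_n)$ to $H^{-1/2}(K_n)$ combined with $\langle\theta_2(u),u\rangle\le\|\theta_2(u)\|_{H^{-1/2}(K_n)}\|u\|_{H^{1/2}(K_n)}$ — I obtain $\langle\theta_2(u),u\rangle\le C\|u\|^2_{H^1(K_n)}$. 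Since all the other summands of $|||u|||^2_V$ already appear in $\|u\|^2_V$, this direction follows at once.

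For the reverse bound $\|u\|_{V(\Omega,K_n)}\le C|||u|||_{V(\Omega,K_n)}$ the only term to recover is the bulk $\|u\|^2_{L^2(\Omega)}$, which is absent from $|||\cdot|||_V$. Here I would invoke the Friedrichs inequality \eqref{poincare}, namely $\|u\|^2_{L^2(\Omega)}\le C_p\bigl(\|\nabla u\|^2_{L^2(\Omega)}+\|u\|^2_{L^2(K_n)}\bigr)$. Both terms on the right are dominated by $|||u|||^2_V$: the first is literally one of its summands, and for the second I use that the nonlocal form is non-negative, $\langle\theta_2(u),u\rangle\ge 0$, so that $\|u\|^2_{L^2(K_n)}\le\|u\|^2_{H^1(K_n)}\le\Phi(u)\le|||u|||^2_V$. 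Combining these gives $\|u\|^2_{L^2(\Omega)}\le C|||u|||^2_V$, and since the remaining summands of $\|u\|^2_V$ are again controlled by $\Phi(u)$ (once more via $\langle\theta_2(u),u\rangle\ge 0$), the claim is proved.

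The delicate step is this second inequality: the norm $|||\cdot|||_V$ records no information on $u$ inside $\Omega$ beyond its gradient, so the bulk $L^2$-mass must be reconstructed from the gradient and the boundary trace. This is exactly what the Friedrichs inequality \eqref{poincare} supplies, and the non-negativity of $\theta_2$ is what lets me discard the nonlocal contribution when passing from $\Phi(u)$ to $\|u\|^2_{H^1(K_n)}$. The first inequality, by contrast, is a soft consequence of the continuity of $\theta_2$ and presents no real obstacle.
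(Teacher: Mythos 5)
Your proof is correct and follows essentially the same route as the paper: the bound $\|u\|^2_{V(\Omega,K_n)}\le C\,|||u|||^2_{V(\Omega,K_n)}$ via the Friedrichs inequality \eqref{poincare}, and the reverse bound via $\langle\theta_2(u),u\rangle\le\|u\|^2_{H^{1/2}(K_n)}\le C\|u\|^2_{H^1(K_n)}$. Your version merely spells out the termwise comparison of the two squared norms, which the paper leaves implicit.
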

\begin{proof} We note that $\|u\|^2_{V(\Omega, K_n)}\leq C_2 |||u|||^2_{V(\Omega,K_n)}$ thanks to \eqref{poincare}. To prove that $\|u\|^2_{V(\Omega,K_n)}\geq C_1 |||u|||^2_{V(\Omega,K_n)}$ we note that
\begin{center}
$\displaystyle\langle\theta_2 (u),u\rangle\leq\|u\|^2_{H^{\frac{1}{2}} (K_n)}\leq\,C\|u\|^2_{H^1(K_n)}$.
\end{center}
\end{proof}
\noindent We now prove some properties of the form $E$.
\begin{prop}\label{coer} The form $E[u]$ defined in \eqref{defforma} is continuous and coercive on $V(\Omega,K_n)$.
\end{prop}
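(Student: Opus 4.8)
The plan is to prove continuity and coercivity by examining each of the four terms of the energy form $E$ separately and relating them to the equivalent norm $|||\cdot|||_{V(\Omega,K_n)}$ introduced in \eqref{normeq}. Since equivalence of the two norms has just been established, it suffices to estimate $E[u]$ against $|||u|||^2_{V(\Omega,K_n)}$ from above (continuity) and from below (coercivity). The nonlocal term $\langle\theta_2(u),u\rangle$ is the only genuinely new ingredient, so the key is to handle it alongside the standard bulk and surface Dirichlet forms.

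For continuity I would proceed as follows. The bulk term $E_\Omega[u]=\|\nabla u\|^2_{L^2(\Omega)}$ and the surface term $E_{K_n}[u]=\|\nabla_s u\|^2_{L^2(K_n)}$ are each bounded by $|||u|||^2_{V(\Omega,K_n)}$ directly, since $\|\nabla_s u\|^2_{L^2(K_n)}\leq\|u\|^2_{H^1(K_n)}\leq\Phi(u)$. The zeroth-order boundary term is controlled by the continuity of $b$ on the compact set $\overline{\Omega}$, giving $\int_{K_n}b\,|u|^2\,ds\leq\|b\|_{L^\infty}\|u\|^2_{L^2(K_n)}\leq C\,\Phi(u)$. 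For the nonlocal term one uses the bound already recorded in the proof of the previous proposition, namely
\begin{equation*}
\langle\theta_2(u),u\rangle\leq\|u\|^2_{H^{\frac{1}{2}}(K_n)}\leq C\,\|u\|^2_{H^1(K_n)}\leq C\,\Phi(u).
\end{equation*}
Summing the four estimates yields $E[u]\leq C\,|||u|||^2_{V(\Omega,K_n)}$, which by norm equivalence gives continuity with respect to $\|\cdot\|_{V(\Omega,K_n)}$.

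Coercivity is the more delicate direction, and I expect it to be the main point requiring care. The crucial observation is that $\theta_2$ is a nonnegative quadratic form: by its very definition as a double integral of $(u(x)-u(y))^2/|x-y|^2$, one has $\langle\theta_2(u),u\rangle\geq 0$. Consequently this term can simply be discarded when bounding $E[u]$ from below, and it never fights against coercivity. Thus
\begin{equation*}
E[u]\geq\|\nabla u\|^2_{L^2(\Omega)}+\|\nabla_s u\|^2_{L^2(K_n)}\geq C\bigl(\|\nabla u\|^2_{L^2(\Omega)}+\|u\|^2_{H^1(K_n)}+\langle\theta_2(u),u\rangle\bigr),
\end{equation*}
where the second inequality needs the missing $L^2(K_n)$ contribution. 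The genuine obstacle is therefore controlling $\|u\|^2_{L^2(K_n)}$ from below by the energy: one cannot in general bound the $L^2$ norm of the trace by the two Dirichlet energies alone. I would overcome this by using the term $\int_{K_n}b\,|u|^2\,ds$, exploiting that $b$ is strictly positive and continuous on the compact $\overline{\Omega}$, hence bounded below by some $b_0>0$; this supplies $\int_{K_n}b\,|u|^2\,ds\geq b_0\|u\|^2_{L^2(K_n)}$ and closes the estimate for $\|u\|^2_{H^1(K_n)}$. Once the full $|||u|||^2_{V(\Omega,K_n)}$ is recovered below $C^{-1}E[u]$, a final appeal to the norm equivalence and the Friedrichs inequality \eqref{poincare} gives coercivity with respect to $\|\cdot\|_{V(\Omega,K_n)}$, completing the proof.
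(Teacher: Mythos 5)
Your proposal is correct and follows essentially the same route as the paper: continuity via $\langle\theta_2(u),u\rangle\leq C\|u\|^2_{H^1(K_n)}$ and the continuity of $b$, and coercivity by using $\min_{\overline\Omega}b>0$ to supply the $L^2(K_n)$ contribution before passing through the equivalent norm $|||\cdot|||_{V(\Omega,K_n)}$ and the Friedrichs inequality. The only cosmetic difference is that you discard the nonnegative nonlocal term and reinsert it afterwards via its $H^1(K_n)$ bound, whereas the paper simply carries it along; both are valid.
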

\begin{proof} We start by proving the continuity of $E$ on $V(\Omega,K_n)$. Since $b$ is continuous on $\overline{\Omega}$, we have
\begin{center}
$\displaystyle E[u]\leq\|\nabla u\|^2_{L^2(\Omega)}+\|\nabla_s u\|^2_{L^2(K_n)}+\max_{K_n}\,(b)\,\|u\|^2_{L^2(K_n)}+\langle\theta_2 (u),u\rangle\leq\|u\|^2_{H^1(\Omega)}+c_1\,\|u\|^2_{H^1(K_n)}+\langle\theta_2 (u),u\rangle\leq\|u\|^2_{H^1(\Omega)}+c_2\|u\|^2_{H^1(K_n)}\leq\max\{1,c_2\}\,\|u\|^2_{V(\Omega, K_n)}$.
\end{center}
We prove the coercivity. By using again the continuity of $b$, we have
\begin{center}
$\displaystyle E[u]\geq\|\nabla u\|^2_{L^2(\Omega)}+\|\nabla u\|^2_{L^2(K_n)}+\min_{K_n}\,(b)\,\|u\|^2_{L^2(K_n)}+\langle\theta_2 (u),u\rangle\geq\|\nabla u\|^2_{L^2(\Omega)}+\min\{1,\min_{K_n}\,(b)\}\|u\|^2_{H^1(K_n)}+\iint_{K_n\times K_n}\frac{|u(x)-u(y)|^2}{|x-y|^2}\,ds(x)\,ds(y)\geq\bar{C}\|u\|^2_{V(\Omega,K_n)}$,
\end{center}
where $\bar{C}$ depends on $b$ and $C_1$.
\end{proof}

\begin{prop} The energy form $E[u]$ is closed in $L^2(\Omega,m)$, i.e. for every Cauchy sequence $\{u_k\}\subseteq V(\Omega,K_n)$ there exists $u\in V(\Omega,K_n)$ such that
\begin{center}
$\displaystyle E[u_k-u]+\|u_k-u\|_{L^2(\Omega,m)}\to 0\quad$ for $k\to +\infty$.
\end{center}
\end{prop}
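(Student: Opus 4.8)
The plan is to establish closedness of the form $E$ by showing that the equivalent norm $|||\cdot|||_{V(\Omega,K_n)}$ introduced in \eqref{normeq}, together with the $L^2(\Omega,m)$ norm, makes $V(\Omega,K_n)$ complete, and that $E$ controls this norm. The key observation is that, by Proposition \ref{coer}, there exist positive constants with
\begin{equation*}
\bar C\,\|u\|^2_{V(\Omega,K_n)}\le E[u]\le \max\{1,c_2\}\,\|u\|^2_{V(\Omega,K_n)},
\end{equation*}
so that $E[u]+\|u\|^2_{L^2(\Omega,m)}$ is equivalent to $\|u\|^2_{V(\Omega,K_n)}$. Consequently, a sequence $\{u_k\}$ that is Cauchy with respect to the quantity $E[u_k-u_j]+\|u_k-u_j\|^2_{L^2(\Omega,m)}$ is precisely a Cauchy sequence in the Hilbert-space norm $\|\cdot\|_{V(\Omega,K_n)}$. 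Thus closedness reduces to completeness of $(V(\Omega,K_n),\|\cdot\|_{V(\Omega,K_n)})$, which was already asserted when the norm was introduced.

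First I would take an arbitrary sequence $\{u_k\}\subseteq V(\Omega,K_n)$ that is Cauchy in the sense of the statement, and use the coercivity and continuity bounds to deduce that $\{u_k\}$ is Cauchy for $\|\cdot\|_{V(\Omega,K_n)}$. Next, since $V(\Omega,K_n)$ is a Hilbert space (as stated in Section \ref{sec2}), there exists $u\in V(\Omega,K_n)$ with $\|u_k-u\|_{V(\Omega,K_n)}\to 0$. Finally, I would run the continuity estimate once more, applied to the difference $u_k-u$, to conclude
\begin{equation*}
E[u_k-u]+\|u_k-u\|_{L^2(\Omega,m)}\le C\,\|u_k-u\|^2_{V(\Omega,K_n)}+\|u_k-u\|_{L^2(\Omega,m)}\to 0,
\end{equation*}
where the $L^2(\Omega,m)$ term is dominated by $\|u_k-u\|_{V(\Omega,K_n)}$ by definition of the latter norm. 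This yields exactly the claimed convergence.

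The main obstacle is not any single estimate but rather the careful bookkeeping needed to confirm that the Cauchy condition stated with the quadratic form $E[u_k-u]$ (which uses the \emph{bilinearization} of $E$ on differences) genuinely matches the norm-Cauchy condition. Because $E$ is a quadratic form built from the symmetric bilinear forms $E_\Omega$, $E_{K_n}$, the boundary integral, and the symmetric pairing $\langle\theta_2(\cdot),\cdot\rangle$, evaluating $E$ on $u_k-u_j$ is legitimate, and its nonnegativity is guaranteed by the coercivity inequality. Once this is in place, the argument is essentially the standard equivalence-of-norms completeness argument, and no delicate fractal-geometric input beyond the Friedrichs inequality \eqref{poincare} is required.
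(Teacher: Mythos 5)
Your argument is correct, but it follows a genuinely different route from the paper's. You reduce closedness to two facts: the two-sided bound $\bar C\,\|u\|^2_{V(\Omega,K_n)}\le E[u]\le C\,\|u\|^2_{V(\Omega,K_n)}$ from Proposition \ref{coer}, and the completeness of $\bigl(V(\Omega,K_n),\|\cdot\|_{V(\Omega,K_n)}\bigr)$, which the paper asserts when the norm is introduced. Granting that assertion, your three steps (form-Cauchy $\Leftrightarrow$ norm-Cauchy, extract the limit in the Hilbert space, run continuity on $u_k-u$) close the argument cleanly and more briefly than the paper does. The paper instead works term by term: it first extracts the limit $u$ only in $L^2(\Omega,m)$, observes that each nonnegative summand of $E[u_k-u_j]$ must vanish separately, deduces that $\{\nabla u_k\}$ is Cauchy in $L^2$, and then identifies its limit $w$ with $\nabla u$ via the closedness of the distributional gradient (Remark 4, Chapter 9 in \cite{brezis}), before checking the $b$-term and the $\theta_2$-term by hand. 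The trade-off is this: under the norm equivalence you invoke, ``$E$ is closed'' and ``$V(\Omega,K_n)$ is complete'' are essentially the same statement, so your proof outsources the real mathematical content to an unproved sentence of Section \ref{sec2}, whereas the paper's term-by-term argument is precisely the verification of that completeness (the key step being $w=\nabla u$ a.e., which shows the $L^2$-limit actually lies in $V(\Omega,K_n)$). Your proof is acceptable as written only if the Hilbert-space claim is taken as an established prerequisite; if you want a self-contained argument you should add the short verification that a $\|\cdot\|_{V(\Omega,K_n)}$-Cauchy sequence has its limit in $V(\Omega,K_n)$ --- i.e.\ exactly the weak-gradient identification and the trace identification that the paper carries out. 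One further small point in your favor: your remark that $E[u_k-u_j]$ makes sense and is nonnegative because $E$ is the quadratic form of a symmetric bilinear form is correct and is implicitly used, not stated, in the paper.
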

\begin{proof} Let $\{u_k\}$ be a Cauchy sequence in $V(\Omega,K_n)$, i.e. a sequence such that
\begin{center}
$E[u_k-u_j]+\|u_k-u_j\|_{L^2(\Omega,m)}\to 0\quad$ for $k,j\to +\infty$.
\end{center}
We observe that in particular $\{u_k\}$ is a Cauchy sequence in $L^2(\Omega,m)$ and, since $L^2(\Omega,m)$ is a Banach space, there exists an element $u\in L^2(\Omega,m)$ such that
\begin{center}
$\|u_k-u\|_{L^2(\Omega,m)}\xrightarrow[k\to +\infty]{} 0$.
\end{center}
We have to prove that
\begin{center}
$\displaystyle E[u_k-u]\xrightarrow[k\to +\infty]{} 0$.
\end{center}
We note that from $E[u_k-u_j]\to 0$ when $k,j\to +\infty$, it follows that each term in \eqref{defforma} vanishes (because they are all non negative terms).\\
Since $\displaystyle\int_{\Omega} |\nabla (u_k-u_j)|^2\,dx\to 0$, it follows that $\{\nabla u_k\}$ is a Cauchy sequence in $L^2 (\Omega)$, and the same holds for the terms in $L^2(K_n)$. Then $\{\nabla u_k\}$ is a Cauchy sequence in $L^2 (\Omega,m)$, hence there exists an element $w\in L^2(\Omega,m)$ such that $\nabla u\to w$ in $L^2 (\Omega,m)$. From Remark 4, Chapter 9 in~\cite{brezis}, we know that $w=\nabla u$ a.e., so we have that $u\in V(\Omega,K_n)$.\\
It is trivial that $\displaystyle\int_{K_n} b|u_k-u|^2\,ds\to 0$ because $b$ is a continuous function on $\bar{\Omega}$. It remains to study the term $\theta_2$:
\begin{center}
$\displaystyle\langle\theta_2 (u_k-u),u_k-u\rangle_{H^{-\frac{1}{2}} (K_n), H^{\frac{1}{2}} (K_n)}=\iint_{K_n\times K_n}\frac{|u_k(x)-u(x)-(u_k(y)-u(y))|^2}{|x-y|^2}\,ds(x)\,ds(y)\leq\|u_k-u\|^2_{H^{\frac{1}{2}} (K_n)}\leq\,C\|u_k-u\|^2_{H^1(K_n)}$
\end{center}
and the last term tends to 0 when $k\to +\infty$ because we know that $u_k\to u$ in $V(\Omega,K_n)$.
\end{proof}

\begin{teo}\label{dirform} The energy form $E[u]$ with its domain $V(\Omega,K_n)$ is a Dirichlet form on $L^2(\Omega,m)$.
\end{teo}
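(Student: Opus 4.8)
The plan is to verify the three defining properties of a Dirichlet form on $L^2(\Omega,m)$ for the pair $(E,V(\Omega,K_n))$: that $E$ is a densely defined, symmetric, non-negative closed bilinear form, and that it is Markovian, i.e. that the unit contraction property holds. Symmetry and bilinearity (via polarization) are immediate from \eqref{defforma}, since each of the four terms $E_\Omega$, $E_{K_n}$, $\int_{K_n} b\,|u|^2\,ds$ and $\langle\theta_2(u),u\rangle$ is the quadratic form of a symmetric bilinear form; non-negativity follows since all four are non-negative (using $b>0$). Closedness has just been established in the previous proposition, while coercivity and continuity are guaranteed by Proposition \ref{coer}. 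Finally, density of $V(\Omega,K_n)$ in $L^2(\Omega,m)$ follows from the density of smooth functions, so that $(E,V(\Omega,K_n))$ is a densely defined closed symmetric form. It then remains only to prove the Markovian property.

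To this end, given $u\in V(\Omega,K_n)$ I would consider its unit contraction $\tilde u:=(0\vee u)\wedge 1$ and show that $\tilde u\in V(\Omega,K_n)$ with $E[\tilde u]\leq E[u]$. The membership uses the standard fact that truncation preserves $H^1(\Omega)$, with $\nabla\tilde u=\chi_{\{0<u<1\}}\,\nabla u$ almost everywhere; the analogous statement on the curve $K_n$, together with the compatibility $\gamma_0\tilde u=(0\vee\gamma_0 u)\wedge 1$ (the trace commutes with the truncation), then gives $\gamma_0\tilde u\in H^1(K_n)$, so that $\tilde u$ indeed belongs to the domain.

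For the energy inequality I would treat the four terms of \eqref{defforma} separately. For the two Dirichlet integrals, the identity $\nabla\tilde u=\chi_{\{0<u<1\}}\,\nabla u$ and its tangential counterpart on $K_n$ yield $|\nabla\tilde u|\leq|\nabla u|$ and $|\nabla_s\tilde u|\leq|\nabla_s u|$ pointwise, whence $E_\Omega[\tilde u]\leq E_\Omega[u]$ and $E_{K_n}[\tilde u]\leq E_{K_n}[u]$. For the potential term, a pointwise check over the three cases $u<0$, $0\leq u\leq 1$, $u>1$ shows $|\tilde u|\leq|u|$, so that $\int_{K_n} b\,|\tilde u|^2\,ds\leq\int_{K_n} b\,|u|^2\,ds$ since $b>0$. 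For the nonlocal term I would exploit that the map $t\mapsto(0\vee t)\wedge 1$ is a normal contraction, i.e. it is $1$-Lipschitz and fixes $0$; consequently $|\tilde u(x)-\tilde u(y)|\leq|u(x)-u(y)|$ for all $x,y\in K_n$, and therefore
\begin{center}
$\displaystyle\langle\theta_2(\tilde u),\tilde u\rangle=\iint_{K_n\times K_n}\frac{|\tilde u(x)-\tilde u(y)|^2}{|x-y|^2}\,ds(x)\,ds(y)\leq\iint_{K_n\times K_n}\frac{|u(x)-u(y)|^2}{|x-y|^2}\,ds(x)\,ds(y)=\langle\theta_2(u),u\rangle$.
\end{center}
Summing the four inequalities gives $E[\tilde u]\leq E[u]$, which is precisely the Markovian property and completes the verification that $E$ is a Dirichlet form.

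The routine local estimates aside, the point requiring the most care is the boundary: one must justify that the truncation commutes with the trace operator $\gamma_0$ and maps $H^1(K_n)$ into itself on the non-smooth polygonal curve $K_n$, so that all three boundary terms are simultaneously controlled. The nonlocal term, by contrast, is handled effortlessly once one observes that the unit contraction is a normal contraction. Density of $V(\Omega,K_n)$ in $L^2(\Omega,m)$ is the other ingredient which, although standard, should be spelled out, since the reference measure $dm=dx+ds$ charges both the bulk $\Omega$ and the boundary $K_n$.
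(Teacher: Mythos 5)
Your proof is correct and follows essentially the same route as the paper: both establish the Markov property by showing that the unit contraction $u\mapsto(0\vee u)\wedge 1$ maps $V(\Omega,K_n)$ into itself and decreases each of the four terms of \eqref{defforma}, the only difference being that you invoke the truncation identity $\nabla\tilde u=\chi_{\{0<u<1\}}\nabla u$ directly where the paper approximates the truncation map by $C^1$ functions $T_\varepsilon$ with $|T_\varepsilon'|\leq 1$. Your explicit treatment of the nonlocal term via the normal-contraction inequality $|\tilde u(x)-\tilde u(y)|\leq|u(x)-u(y)|$ is exactly the step the paper leaves to the reference \cite{farkas}, so no gap remains.
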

\begin{proof} We have to prove that $E[u]$ is markovian. Since we know that $E[u]$ is closed, we can prove a sufficient condition for having markovianity, i.e. $\forall\,u\in V(\Omega,K_n)$
\begin{center}
$v:=(u\vee 0)\wedge 1\in V(\Omega, K_n)$ and $E[v]\leq E [u]$,
\end{center}
where $\displaystyle (u\vee v)(x)=\max\,\{u(x), v(x)\}$ and $\displaystyle (u\wedge v)(x)=\min\,\{u(x), v(x)\}$.\\
Let us consider the map $T\colon\mathbb{R}\to\mathbb{R}$ defined as $T(r)=((r\vee 0)\wedge 1)$, then we set $v(x):=T(u(x))$. Now we approximate $T$ with functions $T_{\varepsilon}\in C^1 (\mathbb{R})$ such that
\begin{center}
$|T_{\varepsilon} (r)-T(r)|<\varepsilon$ and $\displaystyle \left|{\frac{dT_{\varepsilon}}{dr}}\right|\leq 1$.
\end{center}
Since $T_{\varepsilon}\in C^1 (\mathbb{R})$ and $u\in V(\Omega, K_n)$, it follows that $T_{\varepsilon} (u(x))\in V(\Omega, K_n)$, then $T(u(x))=v(x)\in V(\Omega, K_n)$. Now\newline\newline
$\begin{array}{lll}
 \hspace{1.5 cm}\displaystyle\frac{dT_{\varepsilon} (u(x_1(s),x_2(s)))}{ds}=\frac{\partial T_{\varepsilon}}{\partial u}\,\frac{\partial u}{\partial x_1}\,\frac{dx_1(s)}{ds}+\frac{\partial T_{\varepsilon}}{\partial u}\,\frac{\partial u}{\partial x_2}\,\frac{dx_2(s)}{ds}\\[5mm]
 \hspace{3.6 cm}=\displaystyle\frac{\partial T_{\varepsilon}}{\partial u}\,(\nabla u, z(s))=\frac{\partial T_{\varepsilon}}{\partial u}\,\nabla_s u,
\end{array}$\\\newline
where $z(s)=(x'_1(s),x'_2(s))$. Using the properties of $T_{\varepsilon}$, it follows that
\begin{center}
$\displaystyle\left|{\frac{dT_{\varepsilon} (u(x_1(s),x_2(s)))}{ds}}\right|^2\leq\left|{\frac{\partial T_{\varepsilon}}{\partial u}}\right|^2\,|\nabla_s u(x_1(s),x_2(s))|^2\leq |\nabla_s u(x_1(s),x_2(s))|^2$.
\end{center}
Hence we have that
\begin{center}
$\displaystyle E_{K_n} [v]=E_{K_n} [T(u)]\leq\int_{K_n} |\nabla_s u(x(s))|^2\,ds=E_{K_n} [u]$.
\end{center}
We can repeat the same argument on $\displaystyle E_{\Omega} [u]$ to prove that $E_{\Omega}[v]\leq E_{\Omega}[u]$. It can be easily seen that this holds also for the other terms in \eqref{defforma} (see e.g. \cite{farkas}). Hence $E[u]$ is markovian, then $E[u]$ with its domain $V(\Omega,K_n)$ is a Dirichlet form on $L^2(\Omega,m)$.
\end{proof}

\noindent For the main properties of Dirichlet forms, see~\cite{fukush}.\\
\noindent Now we define the bilinear form associated to the energy form $E[u]$ as follows: for every $u,v\in V(\Omega,K_n)$
\begin{equation}
E(u,v)=\int_{\Omega} \nabla u\,\nabla v\,dx+\int_{K_n} \nabla_s u\,\nabla_s v\,ds+\int_{K_n} b\,u\,v\,ds+\langle\theta_2 (u),v\rangle.
\end{equation}
\begin{prop} For all $u,v\in V(\Omega,K_n)$, $E(u,v)$ is a closed symmetric bilinear form on $L^2(\Omega,m)$. Then there exists a unique self-adjoint non-positive operator $A$ on $L^2(\Omega,m)$ such that
\begin{equation}\label{corr}
E(u,v)=(-Au,v)_{L^2(\Omega,m)}\quad\forall\,u\in D(A),\forall\,v\in V(\Omega,K_n),
\end{equation}
where $D(A)\subset V(\Omega,K_n)$ is the domain of $A$ and it is dense in $L^2(\Omega,m)$.
\end{prop}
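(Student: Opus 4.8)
The plan is to recognize $E(u,v)$ as the polarization of the quadratic form $E[u]$ already studied, so that every structural property follows from the results established above together with the abstract representation theorem for closed symmetric forms. First I would check that $E(u,v)$ is a symmetric bilinear form on $V(\Omega,K_n)$. Bilinearity is immediate from the linearity of the integrals and of the duality pairing $\langle\cdot,\cdot\rangle$. For symmetry I would treat the four terms separately: the two Dirichlet integrals $\int_\Omega\nabla u\cdot\nabla v\,dx$ and $\int_{K_n}\nabla_s u\cdot\nabla_s v\,ds$ and the zero-order term $\int_{K_n} b\,u\,v\,ds$ are manifestly symmetric, while the nonlocal term is symmetric since, by its very definition, $\langle\theta_2(u),v\rangle=\iint_{K_n\times K_n}\frac{(u(x)-u(y))(v(x)-v(y))}{|x-y|^2}\,ds(x)\,ds(y)$ is invariant under the exchange of $u$ and $v$. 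Setting $u=v$ recovers $E[u]$, so $E(u,v)$ is exactly the bilinear form associated to the energy functional.

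Next I would verify that the form is densely defined and closed on $L^2(\Omega,m)$. Closedness has in substance already been obtained: by Proposition \ref{coer} the form norm $E_1(u,u):=E[u]+\|u\|^2_{L^2(\Omega,m)}$ is equivalent to $\|\cdot\|^2_{V(\Omega,K_n)}$, coercivity supplying the lower bound and continuity the upper bound, and since $(V(\Omega,K_n),\|\cdot\|_{V(\Omega,K_n)})$ is a Hilbert space, $D(E)=V(\Omega,K_n)$ endowed with $E_1$ is complete. Equivalently, this is the closedness of $E[u]$ proved above, transferred to its polarization; in particular $E$ is a Dirichlet form by Theorem \ref{dirform}, hence a fortiori a closed form. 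For dense definiteness I would note that $V(\Omega,K_n)$ is dense in $L^2(\Omega,m)$: since $dm=dx+ds$ decomposes over the disjoint supports $\Omega$ and $K_n=\partial\Omega$, one approximates the bulk component using functions compactly supported in $\Omega$ (of zero trace) and the boundary component using extensions of functions on $K_n$, both of which belong to $V(\Omega,K_n)$.

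Finally, with $E(u,v)$ symmetric, nonnegative, densely defined and closed on the Hilbert space $L^2(\Omega,m)$, I would invoke the classical representation theorem for such forms (see \cite{fukush}): there exists a unique nonpositive self-adjoint operator $A$ on $L^2(\Omega,m)$, characterized by \eqref{corr}, with $u\in D(A)$ if and only if $u\in V(\Omega,K_n)$ and there is $f\in L^2(\Omega,m)$ with $E(u,v)=(f,v)_{L^2(\Omega,m)}$ for all $v\in V(\Omega,K_n)$, in which case $-Au=f$; the density of $D(A)$ in $L^2(\Omega,m)$ is automatic, being part of the self-adjointness of $A$. The few genuinely technical points are the density of $V(\Omega,K_n)$ in $L^2(\Omega,m)$ and the identification of the form norm with the $V$-norm; once these are secured, existence and uniqueness of $A$ are a direct consequence of the abstract theory, so no essential obstacle remains beyond the verifications outlined above.
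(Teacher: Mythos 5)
Your proposal is correct and takes essentially the same route as the paper, which gives no proof of its own but simply cites Kato's representation theorem for densely defined, closed, symmetric, nonnegative forms: you verify exactly those hypotheses (symmetry and bilinearity termwise, closedness via the equivalence of the form norm with the $V(\Omega,K_n)$-norm supplied by Proposition \ref{coer} and the completeness of $V(\Omega,K_n)$, density of $V(\Omega,K_n)$ in $L^2(\Omega,m)$) and then invoke the abstract theorem. The one point the paper leaves entirely implicit, the density of $V(\Omega,K_n)$ in $L^2(\Omega,m)$, is correctly identified by you as the only genuinely technical ingredient, and your sketch of it (approximating the bulk component by compactly supported functions and the boundary component by extensions from $K_n$) is sound.
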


\noindent For the proof see~\cite{kato}.\\
In Theorem \ref{dirform} we proved that $(E_{K_n}, H^1_0 (K_n))$ is a closed bilinear form on $L^2 (K_n)$. Then, there exists a unique self-adjoint, non positive operator $\Delta_{K_n}$ on $L^2 (K_n)$ (with domain $D(\Delta_{K_n})$ dense in $L^2 (K_n)$) such that
\begin{center}
$\displaystyle E_{K_n} (u,v)=-\int_{K_n} (\Delta_{K_n}\,u)\,v\,ds,\quad u\in D(\Delta_{K_n}),v\in H^1_0(K_n)$
\end{center}
(see Chap. 6, Theorem 2.1 in~\cite{kato}). Let now $H^{-1}(K_n)$ be the dual space of $H^1_0(K_n)$. We can also introduce the Laplace operator on $K_n$ as a variational operator $$\Delta_{K_n}\colon H^1_0(K_n)\to H^{-1}(K_n)$$ by
\begin{equation}\label{var}
E_{K_n} (z,w)=-\langle\Delta_{K_n}\,z,w\rangle_{H^{-1}(K_n), H^1_0(K_n)}
\end{equation}
for $z\in H^1_0(K_n)$ and for all $w\in H^1_0(K_n)$. We will use the same symbol $\Delta_{K_n}$ to define the Laplace operator both as a self-adjoint operator and as a variational operator. It will be clear from the context to which case we refer.
\begin{oss} As it will be clear in \eqref{pbforte2}, $\Delta_{K_n}$ will be the piecewise tangential Laplacian with domain $D(\Delta_{K_n})=H^2(K_n)$
\end{oss}
\begin{teo} The self-adjoint non positive operator $A$ associated to the Dirichlet form $E[u]$ is the generator of a strongly continuous analytic contraction semigroup $\{T_t, t\geq 0\}$ on $L^2(\Omega,m)$.
\end{teo}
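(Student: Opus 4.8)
The plan is to exploit the spectral structure of the operator $A$, which by the preceding proposition is self-adjoint and non-positive on $L^2(\Omega,m)$ and arises from the closed, symmetric, coercive form $E$ (coercivity having been established in Proposition \ref{coer}). Since the generation of an analytic contraction semigroup is a purely spectral matter once self-adjointness and a one-sided bound on the spectrum are in hand, the argument is essentially a verification rather than a construction, and it requires none of the geometry of $\Omega$ or $K_n$.

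First I would record that, by the preceding proposition together with Kato's representation theorem (Chapter 6, Theorem 2.1 in \cite{kato}), $-A$ is a non-negative self-adjoint operator, so by the spectral theorem its spectrum $\sigma(A)$ is contained in $(-\infty,0]$. Writing the spectral resolution $-A=\int_0^{+\infty}\lambda\,dE_\lambda$, I would define $T_t:=e^{tA}=\int_0^{+\infty}e^{-t\lambda}\,dE_\lambda$ for $t\geq 0$ via the functional calculus. The semigroup law $T_tT_\tau=T_{t+\tau}$ and $T_0=I$ then follow directly; the contraction property $\|T_t\|\leq 1$ is immediate from $0\leq e^{-t\lambda}\leq 1$ for $\lambda\geq 0$; and strong continuity $T_tu\to u$ as $t\to 0^+$ follows by dominated convergence applied to $\|T_tu-u\|^2_{L^2(\Omega,m)}=\int_0^{+\infty}|e^{-t\lambda}-1|^2\,d(E_\lambda u,u)$.

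The only step demanding slightly more attention is analyticity, which I would obtain from a sectorial resolvent estimate read off from self-adjointness. For every $\lambda\in\CC\setminus(-\infty,0]$ the resolvent $(\lambda I-A)^{-1}$ exists and, by the spectral theorem, $\|(\lambda I-A)^{-1}\|\leq \dist(\lambda,\sigma(A))^{-1}$. Since $\sigma(A)\subseteq(-\infty,0]$, for any fixed $\theta\in(0,\pi/2)$ and every $\lambda$ in the sector $\Sigma_\theta=\{\lambda\neq 0:|\arg\lambda|<\pi-\theta\}$ one has $\dist(\lambda,(-\infty,0])\geq|\lambda|\sin\theta$, whence $\|\lambda(\lambda I-A)^{-1}\|\leq(\sin\theta)^{-1}$. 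This is exactly the sectoriality condition ensuring that $A$ generates a bounded analytic semigroup on a sector of half-angle $\pi/2-\theta$; letting $\theta\to 0$ yields analyticity of angle $\pi/2$.

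Finally I would assemble these facts: $\{T_t\}_{t\geq 0}$ is a strongly continuous contraction semigroup whose generator is $A$, and the sectorial estimate upgrades it to an analytic semigroup on $L^2(\Omega,m)$. The main (and only mild) obstacle is precisely the analyticity, but it reduces entirely to the elementary distance estimate from the spectrum; alternatively, one may invoke the general theory of symmetric Dirichlet forms in \cite{fukush}, where the associated semigroup is automatically Markovian and, by symmetry of the form, analytic.
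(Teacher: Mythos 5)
Your proof is correct, but it follows a genuinely different route from the paper's. The paper disposes of the three properties by citing three separate results: analyticity is deduced from the coercivity of $E$ established in Proposition 2.3 (via Theorem 6.2, Chapter 4 in \cite{showal}), the contraction property from the Lumer--Phillips theorem (Theorem 4.3, Chapter 1 in \cite{pazy}), and strong continuity from Theorem 1.3.1 in \cite{fukush}. You instead run a single unified argument through the spectral theorem: since $A$ is self-adjoint with $\sigma(A)\subseteq(-\infty,0]$, the functional calculus hands you the semigroup, the contraction bound, strong continuity by dominated convergence, and the sectorial resolvent estimate $\|\lambda(\lambda I-A)^{-1}\|\leq(\sin\theta)^{-1}$ on $\{|\arg\lambda|<\pi-\theta\}$ all at once (your distance computation $\dist(\lambda,(-\infty,0])\geq|\lambda|\sin\theta$ is correct). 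What your approach buys is self-containedness and sharpness: you need only non-positivity and self-adjointness of $A$ --- not the coercivity of $E$, which the paper's cited analyticity theorem relies on --- and you obtain the optimal analyticity angle $\pi/2$ without verifying the dissipativity and range conditions of Lumer--Phillips. What the paper's route buys is generality: the form-based argument via coercive sesquilinear forms extends to non-symmetric perturbations of $E$, where the spectral theorem is unavailable, whereas your argument is confined to the self-adjoint case. One minor point you should make explicit is that the generator of the semigroup $T_t=\int_0^{+\infty}e^{-t\lambda}\,dE_\lambda$ you construct is indeed the operator $A$ with its domain $D(A)$; this is standard spectral theory but is the step that ties your construction back to the statement.
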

\begin{proof} The analyticity of $\{T_t\}$ follows from Proposition \ref{coer} (see Theorem 6.2, Chapter 4 in~\cite{showal}). The contraction property follows from Lumer-Phillips Theorem (see Theorem 4.3, Chapter 1 in~\cite{pazy}). The strong continuity follows from Theorem 1.3.1 in~\cite{fukush}.
\end{proof}

\section{A priori estimates in weighted Sobolev spaces}\label{secapriori}
\setcounter{equation}{0}

\noindent In this section we prove a priori estimates for the solution of problem \label{pbformale}. We stress the fact that the key issue is to prove that $u\in H^2(K_n)$, which does not follow as in the case of local Venttsel' problem (see \cite{lanver2}).\\
We consider the problem formally stated as follows: for every $t\in [0,T]$
\begin{equation}\label{pbformale}
\begin{cases}
\frac{du}{dt}=\Delta u+f &\text{in $\Omega$,}\\[2mm]
-\Delta_{K_n} u=-\frac{\partial u}{\partial\nu}-bu-\theta_2(u)+f-\frac{du}{dt}\quad &\text{on $K_n$},\\[2mm]
u(0,x)=u_0(x) &\text{in $\overline\Omega$}.
\end{cases}
\end{equation}
where $f$ and $u_0$ are given functions in suitable spaces.

\begin{teo}\label{aprioriest} Let $f\in L^2(\Omega,m)$ and $b$ and $\theta_2 (u)$ be as defined above. For every $t\in [0,T]$ let $u\in V^2_\sigma(\Omega,K_n)$ be a solution of problem \eqref{pbformale}. Then there exists a positive constant $C=C(\sigma)$ such that 
\begin{equation}\label{stima5}
\|u\|^2_{H^1(\Omega)}+\|r^\sigma D^2 u\|^2_{L^2(\Omega)}+\|u\|^2_{H^2(K_n)}\leq C(\sigma)\left(\|u\|^2_{L^2(K_n)}+\|f\|^2_{L^2(\Omega,m)}+\left\|\frac{du}{dt}\right\|^2_{L^2(\Omega,m)}\right),
\end{equation}
provided 
\begin{equation}\label{boundsigma}
\frac{1}{4}<\sigma<\frac{1}{2}.
\end{equation}
\end{teo}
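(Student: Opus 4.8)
The plan is to bound the three terms on the left of \eqref{stima5} separately, exploiting the bulk equation, the Venttsel' condition on $K_n$, and weighted elliptic/trace theory tailored to the reentrant corners of $\partial\Omega=K_n$. Throughout, $t$ is fixed and all constants are independent of it.

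First I would obtain the $H^1$ bounds by an energy argument. Testing the weak form of \eqref{pbformale} with $u$ itself and using the coercivity of $E$ from Proposition \ref{coer} gives
\[
\bar C\,\|u\|^2_{V(\Omega,K_n)}\le E(u,u)=\left(f-\tfrac{du}{dt},\,u\right)_{L^2(\Omega,m)}.
\]
By Cauchy--Schwarz and Young's inequality, absorbing $\|u\|^2_{L^2(\Omega,m)}$ into $\bar C\,\|u\|^2_{V(\Omega,K_n)}$ (and invoking the Friedrichs inequality \eqref{poincare} to pass from $L^2(\Omega)$ to $\|\nabla u\|^2_{L^2(\Omega)}+\|u\|^2_{L^2(K_n)}$), this controls $\|u\|^2_{H^1(\Omega)}$ and $\|u\|^2_{H^1(K_n)}$ by the data. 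This settles the first summand and supplies the $H^1(K_n)$ bound needed below.

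Next I would bound $\|r^\sigma D^2u\|^2_{L^2(\Omega)}$. The bulk equation in \eqref{pbformale} gives $\Delta u=\frac{du}{dt}-f\in L^2(\Omega)$ with $\|\Delta u\|_{L^2(\Omega)}\le\|\frac{du}{dt}\|_{L^2(\Omega,m)}+\|f\|_{L^2(\Omega,m)}$. Since the reentrant vertices of the prefractal polygon $\Omega$ have interior angle $\eta=\frac{4\pi}{3}$, the leading corner exponent is $\lambda=\pi/\eta=\frac34$, so that $u\sim r^{3/4}$ near those vertices and $r^\sigma D^2u\in L^2(\Omega)$ exactly when $\sigma>1-\lambda=\frac14$. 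The weighted elliptic regularity theory for corner domains (see \cite{nazplam}, \cite{grisvard}) then yields $\|r^\sigma D^2u\|_{L^2(\Omega)}\le C(\sigma)\bigl(\|\Delta u\|_{L^2(\Omega)}+\|u\|_{H^1(\Omega)}\bigr)$ in this range, which is the source of the lower bound in \eqref{boundsigma}; combined with the previous step it controls the second summand.

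The crux is the $H^2(K_n)$ bound, where the nonlocal term forces a route different from the local case. Solving the Venttsel' condition in \eqref{pbformale} for the tangential Laplacian gives $\Delta_{K_n}u=\frac{\partial u}{\partial\nu}+b\,u+\theta_2(u)-f+\frac{du}{dt}$, so it suffices to control the $L^2(K_n)$ norm of each summand and then use $D(\Delta_{K_n})=H^2(K_n)$ with $\|u\|_{H^2(K_n)}\le C\bigl(\|\Delta_{K_n}u\|_{L^2(K_n)}+\|u\|_{L^2(K_n)}\bigr)$ (this last elliptic inequality is what introduces the $\|u\|^2_{L^2(K_n)}$ term on the right of \eqref{stima5}). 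The contributions $b\,u$, $f$ and $\frac{du}{dt}$ are immediate from the continuity of $b$ and from $dm=dx+ds$; the term $\theta_2(u)$, being up to a constant the regional fractional Laplacian of order $\frac12$ on $K_n$, is an operator of order one and hence satisfies $\|\theta_2(u)\|_{L^2(K_n)}\le C\|u\|_{H^1(K_n)}$, already controlled by the first step. \emph{The main obstacle is the normal-derivative term} $\frac{\partial u}{\partial\nu}$: because $u\notin H^2(\Omega)$ near the reentrant vertices, its trace is not handled by the ordinary trace theorem, and I would instead invoke a weighted trace inequality of Hardy type, $\|\frac{\partial u}{\partial\nu}\|_{L^2(K_n)}\le C(\sigma)\bigl(\|r^\sigma D^2u\|_{L^2(\Omega)}+\|\nabla u\|_{L^2(\Omega)}\bigr)$, valid only for $\sigma<\frac12$, which is the source of the upper bound in \eqref{boundsigma}. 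Assembling the three steps gives \eqref{stima5}, the admissible range $\frac14<\sigma<\frac12$ arising precisely as the overlap of the weighted interior estimate (lower bound) and the weighted normal trace (upper bound).
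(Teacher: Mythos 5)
Your overall architecture matches the paper's: an elliptic estimate on $K_n$ for the boundary equation, a weighted elliptic estimate in $\Omega$ valid for $\sigma>\frac14$, and a trace estimate for $\frac{\partial u}{\partial\nu}$ valid for $\sigma<\frac12$; you also identify correctly where the two bounds in \eqref{boundsigma} come from. But there is a genuine gap in your second step that then propagates to the ``assembly''. The inequality $\|r^\sigma D^2u\|_{L^2(\Omega)}\le C(\sigma)\bigl(\|\Delta u\|_{L^2(\Omega)}+\|u\|_{H^1(\Omega)}\bigr)$ is false: knowing $\Delta u\in L^2(\Omega)$ and $u\in H^1(\Omega)$ gives no second-derivative control near $K_n$ without information on the boundary values (take $\Delta u=0$ with $u|_{K_n}\in H^{1/2}(K_n)\setminus H^{3/2}_\sigma(K_n)$). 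The correct Kondrat'ev/Nazarov--Plamenevsky estimate carries the $H^{3/2}_\sigma(K_n)$ norm of the Dirichlet datum on the right, i.e.\ (after subtracting a corrector $U$, linear near the vertices, and invoking Hardy's inequality to handle the corner compatibility) a multiple of $\|u\|_{H^2(K_n)}$ --- this is the paper's \eqref{stimaU+v}. Consequently your three bounds are not independent: the $H^2(K_n)$ estimate needs $\|\frac{\partial u}{\partial\nu}\|_{L^2(K_n)}$, which by your own weighted trace inequality needs $\|r^\sigma D^2u\|_{L^2(\Omega)}$, which in turn needs $\|u\|_{H^2(K_n)}$. ``Assembling the three steps'' therefore leaves $\|u\|^2_{H^2(K_n)}$ on both sides with an unspecified constant, and the inequality does not close. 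The paper's proof is built precisely around closing this circle (the ``Munchhausen trick''): the normal-derivative trace is split by a cutoff $\eta_\delta$ near the vertices into a compact part plus a part of size $\delta^{\frac12-\sigma}$, so that the $\|u\|^2_{H^2(K_n)}$ contribution reappears with a factor $\varepsilon$ that can be absorbed, at the price of the $C(\varepsilon)\|u\|^2_{L^2(K_n)}$ term visible in \eqref{stima5}. Your proposal contains no absorption step, and without one the argument fails.

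A secondary point: your bound $\|\theta_2(u)\|_{L^2(K_n)}\le C\|u\|_{H^1(K_n)}$ is asserted, not proved. For $u\in H^1(K_n)$ the integral defining $\theta_2(u)$ pointwise is not absolutely convergent (the integrand is only $O(|x-y|^{-3/2})$), so the claim amounts to an $L^2$ bound for a principal-value singular integral on the non-smooth curve $K_n$ with the Euclidean kernel, which needs an argument. The paper takes a different and safer route: it shows $\theta_2(u)\in H^\beta(K_n)$ for $\beta<\frac12$ with norm controlled by $\|u\|_{H^2(K_n)}$ (via the Fourier description of the corner singularity $|x|$), and then uses the compact embedding $H^\beta(K_n)\hookrightarrow L^2(K_n)$ to produce an estimate of the form $\varepsilon\|u\|^2_{H^2(K_n)}+C(\varepsilon)\|u\|^2_{L^2(K_n)}$, again feeding the absorption. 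If your $H^1$ bound could be justified it would simplify this part, but as written it is an unproven claim sitting inside an argument whose main loop is already broken.
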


\begin{proof}
We adapt the proof of Theorem 2.1 in \cite{crelannazver} to our case. We use the Munchhausen trick. We start by assuming that $\frac{\partial u}{\partial\nu}$ and $\theta_2 (u)$ belong to $L^2(K_n)$, hence the right-hand side of the second equation in \eqref{pbformale} belongs to $L^2(K_n)$. Then the following estimate holds:
\begin{equation}\label{stima1}
\|u\|^2_{H^2(K_n)}\leq C\left(\left\|\frac{\partial u}{\partial\nu}\right\|^2_{L^2(K_n)} + \|u\|^2_{L^2(K_n)}+\|\theta_2 (u)\|^2_{L^2(K_n)}+\|f\|^2_{L^2(K_n)}+\left\|\frac{du}{dt}\right\|^2_{L^2(K_n)}\right).
\end{equation}
First we estimate $\|\theta_2(u)\|^2_{L^2(K_n)}$. We note that since $u\in H^2(K_n)$ it follows in particular that $\theta_2 (u)\in H^{\beta} (K_n)$ with $\beta<1/2$. This can be seen by using the definition of Sobolev space by the Fourier transform $\mathcal{F}$:
\begin{center}
$H^s (\mathbb{R}) =\{v\in\mathcal{S}'\,\mid\, (1+|\xi|^2)^{s/2}\mathcal{F}[v]\in L^2(\mathbb{R})\}$,
\end{center}
where $\mathcal{S}'$ is the space of tempered distributions.\\
We rectify the boundary $K_n$. Our function $u$ belongs to $H^2(K_n)$, then it is piecewise $H^2$ on each segment $M$ of $K_n$. Roughly speaking, the singularity in the corner can be described by a function like $|x|$ (in order to consider the Fourier transform of $|x|$, we have to mollify the function outside of a neighborhood of the singularity). The Fourier transform of $v(x):=|x|$ behaves like $\xi^{-2}$, hence it belongs to $H^\alpha (\R)$ if and only if $\alpha<3/2$. Since the functional $\theta_2(u)$ behaves like the fractional Laplacian $(-\Delta)^\frac{1}{2}$, then $\theta_2(v)$ belongs to $H^{\alpha-1} (\R)$ with $\alpha<3/2$. This implies that $\theta_2(u)\in H^\beta (K_n)$ with $\beta<1/2$ and
\begin{equation}\notag
\|\theta_2(u)\|^2_{H^{\beta}(K_n)}\leq C\|u\|^2_{H^2(K_n)},
\end{equation}
where $C$ depends on $\beta$.\\
We fix $\beta<1/2$. From the compact embedding of $H^{\beta} (K_n)$ in $L^2(K_n)$ we deduce that for every $\varepsilon>0$ there exists a constant $C(\varepsilon)$ such that
\begin{equation}\notag
\|\theta_2(u)\|^2_{L^2(K_n)}\leq\varepsilon\|\theta_2(u)\|^2_{H^{\beta} (K_n)}+C(\varepsilon)\|\theta_2(u)\|^2_{H^{-\frac{1}{2}}(K_n)},
\end{equation}
see Lemma 6.1, Chapter 2 in~\cite{necas}. Similarly, we have
\begin{equation}\notag
\|\theta_2(u)\|^2_{H^{-\frac{1}{2}}(K_n)}\leq C\|u\|^2_{H^\frac{1}{2}(K_n)}\leq\varepsilon\|u\|^2_{H^2(K_n)}+C(\varepsilon)\|u\|^2_{L^2(K_n)}.
\end{equation}
Therefore we obtain the following estimate using \eqref{stima1}:
\begin{equation}\notag
\|u\|^2_{H^2 (K_n)} \leq C\left(\left\|\frac{\partial u}{\partial\nu}\right\|^2_{L^2(K_n)} +\|f\|^2_{L^2(K_n)}+\left\|\frac{du}{dt}\right\|^2_{L^2(K_n)}+\varepsilon\|u\|^2_{H^{2}(K_n)}+C(\varepsilon)\|u\|^2_{L^2(K_n)}\right).
\end{equation}
By choosing $\varepsilon$ sufficiently small we obtain
\begin{equation}\label{intermedia}
\|u\|^2_{H^2(K_n)}\leq C\left(\left\|\frac{\partial u}{\partial\nu}\right\|^2_{L^2(K_n)} + \|u\|^2_{L^2(K_n)}+\|f\|^2_{L^2(K_n)}+\left\|\frac{du}{dt}\right\|^2_{L^2(K_n)}\right),
\end{equation}
with $C$ independent of $\varepsilon$.\\
We now estimate $\left\|\frac{\partial u}{\partial\nu}\right\|^2_{L^2(K_n)}$. 
We consider a smooth function $U$ on $\overline\Omega$ which is linear near the corners of $K_n$ and such that $(u-U)(P)=\nabla_s(u-U)(P)=0$ in every vertex $P$ of $K_n$. 

\noindent If we consider the function $v=u-U$, from Hardy inequality applied on each segment of $K_n$ (see~\cite{hardy}) we obtain that $v\in H^2_{\gamma=0} (K_n)$ and $v\in H^\frac{3}{2}_\sigma(K_n)$, with
\begin{equation}\label{stimav}
\|v\|_{H^\frac{3}{2}_\sigma(K_n)}\leq C\|u\|_{H^2(K_n)}.
\end{equation}

\noindent Now we consider $v$ as the solution of the Dirichlet problem
\begin{equation}\label{problemav}
\begin{cases}
-\Delta v=f-\frac{du}{dt}+\Delta U\in L^2 (\Omega),\\[2mm]
v|_{K_n}\in H^\frac{3}{2}_\sigma (K_n).
\end{cases}
\end{equation}
We note that, due to our hypothesis on $\sigma$, in particular $f-\frac{du}{dt}\in L^2_\sigma (\Omega)$. Hence, from Theorem 3.1, Chapter 2 in~\cite{nazplam} (with $l=0$) it follows that $v\in H^2_{\sigma} (\Omega)$ if $|\sigma-1|<3/4$ and, 
from the hypothesis on the function $U$ and \eqref{stimav}, the following estimate holds:
\begin{equation}\label{stimaU+v}
\begin{split}
\|u\|^2_{H^1(\Omega)}+\|r^\sigma D^2 u\|^2_{L^2(\Omega)}\leq C(\sigma)\left(\left\|r^\sigma\left(f-\frac{du}{dt}\right)\right\|^2_{L^2(\Omega)}+\|u\|^2_{H^2(K_n)}\right)\\
\leq C(\sigma)\left(\left\|\frac{du}{dt}\right\|^2_{L^2(\Omega)}+\|f\|^2_{L^2(\Omega)}+\|u\|^2_{H^2(K_n)}\right).
\end{split}
\end{equation}

\noindent Now we introduce the following trace operator:
\begin{equation}\notag
u\longrightarrow\frac{\partial u}{\partial\nu}\Big|_{K_n}=\eta_\delta\frac{\partial u}{\partial\nu}\Big|_{K_n}+(1-\eta_\delta)\frac{\partial u}{\partial\nu}\Big|_{K_n}=:\mathcal{K}_1(\delta)u+\mathcal{K}_2(\delta)u,
\end{equation}
where $\eta_\delta$ is a suitable cutoff function near the vertices. We remark that the operator $\mathcal{K}_1(\delta)\colon H^2_\sigma(\Omega)\to L^2(\partial\Omega)$ is compact and the following estimate holds for $\mathcal{K}_2(\delta)$:
\begin{equation}\notag
\|\mathcal{K}_2(\delta)u\|^2_{L^2(\partial\Omega)}\leq C(\sigma)\delta^{\frac{1}{2}-\sigma}\left(\left\|\frac{du}{dt}\right\|^2_{L^2(\Omega)}+\|f\|^2_{L^2(\Omega)}+\|u\|^2_{H^2(K_n)}\right).
\end{equation}
This in turn implies that $\sigma<1/2$. Hence, by choosing $\delta$ sufficiently small, it follows that
\begin{equation}\notag
\left\|\frac{\partial u}{\partial\nu}\right\|^2_{L^2(K_n)}\leq\varepsilon\left(\left\|\frac{du}{dt}\right\|^2_{L^2(\Omega)}+\|f\|^2_{L^2(\Omega)}+\|u\|^2_{H^2(K_n)}\right)+C(\varepsilon,\sigma)\|u\|^2_{L^2(K_n)}.
\end{equation}
Substituting the above inequality into \eqref{intermedia} we obtain
\begin{center}
$\displaystyle\|u\|^2_{H^2(K_n)}\leq C\left(\varepsilon\left(\left\|\frac{du}{dt}\right\|^2_{L^2(\Omega)}+\|f\|^2_{L^2(\Omega)}+\|u\|^2_{H^2(K_n)}\right)+C(\varepsilon,\sigma)\|u\|^2_{L^2(K_n)}+\|f\|^2_{L^2(K_n)}+\left\|\frac{du}{dt}\right\|^2_{L^2(K_n)}\right)$.
\end{center}
By choosing $\varepsilon$ sufficiently small we obtain
\begin{equation}\label{intermedia2}
\|u\|^2_{H^2(K_n)}\leq C(\sigma)\left(\|f\|^2_{L^2(\Omega,m)}+\|u\|^2_{L^2(K_n)}+\left\|\frac{du}{dt}\right\|^2_{L^2(\Omega,m)}\right)
\end{equation}
and, taking into account \eqref{stimaU+v}, we get the thesis.
\end{proof}

\section{Existence and uniqueness results}\label{sec3}
\setcounter{equation}{0}

\noindent We now consider the following abstract Cauchy problem, for $T>0$ fixed:
\begin{equation}\label{astratto}
(P)
\begin{cases}
u'(t)=Au(t)+f(t)\quad\text{for $t\in[0,T]$},\\
u(0)=u_0.
\end{cases}
\end{equation}
From semigroup theory we get the following existence and uniqueness result.
\begin{teo}\label{strict}
Let $\alpha\in (0,1)$, $f\in C^{0,\alpha} ([0,T];L^2 (\Omega,m))$ and $u_0\in D(A)$. We define
\begin{equation}\label{soluz}
 \displaystyle u(t)=T_t\,u_0+\int_0^t T_{t-\tau}\,f(\tau)\, d\tau.
\end{equation}
Then $u$ defined in \eqref{soluz} is the unique \emph{strict} solution of problem $(P)$, i.e. a function $u$ such that $u'(t)=Au(t)+f(t)$ for all $t\in [0,T]$, $u(0)=u_0$ and
\begin{center}
$u\in C^1 ([0,T]; L^2(\Omega,m))\cap C^0 ([0,T]; D(A))$.
\end{center}
Moreover the following estimate holds:
\begin{center}
$\|u\|_{C^1([0,T]; L^2(\Omega,m))}+\|u\|_{C^0([0,T];D(A))}\leq C\left(\|u_0\|_{D(A)}+\|f\|_{C^{0,\alpha}([0,T];L^2(\Omega,m))}\right)$,
\end{center}
where $C$ is a constant independent of $n$.
\end{teo}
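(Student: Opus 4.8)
The plan is to exploit the analytic-semigroup structure established in the previous theorem and to split the candidate \eqref{soluz} into its homogeneous and inhomogeneous parts, treating each separately; this is the classical inhomogeneous theory for analytic semigroups (as in Pazy). Write $u=v+w$ with $v(t)=T_t u_0$ and $w(t)=\int_0^t T_{t-\tau}f(\tau)\,d\tau$. The two facts about $\{T_t\}$ I would use throughout are that, by analyticity, $T_t$ maps $L^2(\Omega,m)$ into $D(A)$ for every $t>0$, and that there is a constant $C$ (depending only on the semigroup bounds, hence ultimately on the coercivity and continuity constants of $E$ from Proposition \ref{coer}) with $\|AT_t\|\leq C/t$ for $t\in(0,T]$. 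The $n$-independence of the final constant follows from the fact that these bounds are governed by the coercivity constant $\bar C$ of Proposition \ref{coer}, which can be taken uniform in $n$.

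For the homogeneous part, since $u_0\in D(A)$ the map $t\mapsto T_t u_0$ is continuously differentiable with $v'(t)=A T_t u_0=T_t A u_0$; because $Au_0\in L^2(\Omega,m)$ and $\{T_t\}$ is strongly continuous, both $v'(t)$ and $Av(t)=T_t Au_0$ are continuous in $t$. Hence $v\in C^1([0,T];L^2(\Omega,m))\cap C^0([0,T];D(A))$, with the corresponding norms bounded by $\|u_0\|_{D(A)}$.

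The inhomogeneous part is where the H\"older hypothesis on $f$ is essential, and this is the step I expect to be the main obstacle. The naive formula $Aw(t)=\int_0^t AT_{t-\tau}f(\tau)\,d\tau$ fails because $\|AT_{t-\tau}\|\sim(t-\tau)^{-1}$ is not integrable near $\tau=t$. The remedy is the subtraction trick: write
\begin{equation}\notag
Aw(t)=\int_0^t AT_{t-\tau}\bigl(f(\tau)-f(t)\bigr)\,d\tau+(T_t-I)f(t).
\end{equation}
By H\"older continuity and the smoothing estimate the first integrand is bounded in norm by $C[f]_\alpha(t-\tau)^{\alpha-1}$, which is integrable precisely because $\alpha>0$, while the second term uses the identity $\int_0^t AT_s\,ds=T_t-I$. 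One then checks that $w(t)\in D(A)$, that $t\mapsto Aw(t)$ is continuous, and, by differentiating \eqref{soluz}, that $w'(t)=Aw(t)+f(t)$ with $w'$ continuous. This yields $w\in C^1([0,T];L^2(\Omega,m))\cap C^0([0,T];D(A))$ and, tracking the constants above, a bound by $\|f\|_{C^{0,\alpha}([0,T];L^2(\Omega,m))}$.

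Combining the two parts, $u=v+w$ satisfies $u'(t)=Au(t)+f(t)$ for all $t\in[0,T]$, $u(0)=u_0$, and lies in the required regularity class, the a priori estimate following by summing the two contributions. Uniqueness is immediate: if $u_1,u_2$ are strict solutions, then $z=u_1-u_2$ solves $z'=Az$ with $z(0)=0$, and since $A$ is self-adjoint and non-positive the energy identity gives $\tfrac{d}{dt}\|z\|^2_{L^2(\Omega,m)}=2(Az,z)_{L^2(\Omega,m)}\leq 0$, whence $z\equiv 0$. Everything here is classical once analyticity is in hand; the only delicate point is the integrability near the diagonal, handled by the subtraction trick, which is exactly where the assumption $\alpha>0$ is used.
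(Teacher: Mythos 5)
Your argument is correct and is essentially the paper's own approach: the paper simply invokes Theorem 4.3.1 of Lunardi's monograph, and what you have written out (the splitting $u=v+w$, the smoothing bound $\|AT_t\|\leq C/t$, the subtraction trick exploiting the H\"older continuity of $f$, and uniqueness via the energy identity for the self-adjoint non-positive generator) is precisely the classical proof of that cited result. No gaps; the only remark is that the $n$-independence of the constant is even more direct than you suggest, since for a self-adjoint non-positive generator one has the universal spectral bound $\|AT_t\|\leq 1/(et)$.
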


\noindent For the proof see Theorem 4.3.1 in~\cite{lunardi}.
\begin{oss} If we suppose $u_0\in\overline{D(A)}$ in Theorem \ref{strict}, then the solution $u$ of equation \eqref{astratto} is \emph{classical}. In addition to that, we have that $u\in C^{1,\alpha} ([\varepsilon,T]; L^2(\Omega,m))\cap C^{0,\alpha} ([\varepsilon,T]; D(A))$ for every $\varepsilon\in (0,T)$.
\end{oss}

\noindent We now give the strong formulation of the abstract Cauchy problem $(P)$.
\begin{teo} Let $u$ be the unique strict solution of \eqref{astratto}. Then, for every $t\in [0,T]$, it holds that
\begin{equation}\label{pbforte}
\begin{cases}
\frac{du}{dt}(t,x)=\Delta u(t,x)+f(t,x) &\text{for a.e. $x=(x_1,x_2)\in\Omega$,}\\[2mm]
\frac{du}{dt}=-\frac{\partial u}{\partial\nu}+\Delta_{K_n} u-bu-\theta_2(u)+f\quad &\text{in $H^{-\frac{1}{2}} (K_n)$},\\[2mm]
u(0,x)=u_0(x) &\text{in $\overline\Omega$}.
\end{cases}
\end{equation}
\end{teo}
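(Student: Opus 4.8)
The plan is to unwind the abstract equation $u'(t)=Au(t)+f(t)$ back into the variational identity that defines $A$, and then to localise that identity, first in the interior of $\Omega$ and afterwards on the curve $K_n$. Since $u(t)\in D(A)$, the characterisation \eqref{corr} of $A$ gives, for every $v\in V(\Omega,K_n)$,
\begin{equation}\notag
(u'(t),v)_{L^2(\Omega,m)}=-E(u(t),v)+(f(t),v)_{L^2(\Omega,m)}.
\end{equation}
Writing out $dm=dx+ds$ and expanding the bilinear form, I would record the single master identity
\begin{equation}\notag
\int_\Omega u'v\,dx+\int_{K_n}u'v\,ds=-\int_\Omega\nabla u\cdot\nabla v\,dx-\int_{K_n}\nabla_s u\cdot\nabla_s v\,ds-\int_{K_n}buv\,ds-\langle\theta_2(u),v\rangle+\int_\Omega fv\,dx+\int_{K_n}fv\,ds,
\end{equation}
valid for all $v\in V(\Omega,K_n)$, from which both equations in \eqref{pbforte} are to be extracted by a suitable choice of test functions.

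To obtain the bulk equation, I would first restrict to $v\in C_0^{\infty}(\Omega)$, whose trace on $K_n$ vanishes, so that every boundary integral drops out. What remains is $\int_\Omega u'v\,dx=-\int_\Omega\nabla u\cdot\nabla v\,dx+\int_\Omega fv\,dx$, which says precisely that $-\Delta u=f-u'$ in the sense of distributions on $\Omega$. Because $u',f\in L^2(\Omega,m)$ and hence $u'-f\in L^2(\Omega)$, this identity both yields the pointwise equation $\frac{du}{dt}=\Delta u+f$ a.e. in $\Omega$ and certifies that $\Delta u\in L^2(\Omega)$, which is exactly what is needed to make sense of a conormal derivative on the irregular boundary.

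For the boundary equation I would return to the master identity with a general $v\in V(\Omega,K_n)$ and integrate by parts in the bulk via the generalised Green formula: since $u\in H^1(\Omega)$ with $\Delta u\in L^2(\Omega)$, the conormal derivative $\frac{\partial u}{\partial\nu}\in H^{-\frac12}(K_n)$ is defined by $\langle\frac{\partial u}{\partial\nu},v\rangle=\int_\Omega\nabla u\cdot\nabla v\,dx+\int_\Omega(\Delta u)v\,dx$, where the trace $v$ lives in $H^{\frac12}(K_n)$ by Proposition \ref{traccia}. Substituting this and the already-established interior equation, the two volume integrals cancel and one is left with an identity carried entirely by $K_n$. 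Invoking the variational definition \eqref{var} of $\Delta_{K_n}$ to rewrite $\int_{K_n}\nabla_s u\cdot\nabla_s v\,ds=-\langle\Delta_{K_n}u,v\rangle$—with no endpoint contributions, since $K_n$ is a closed curve—the surviving identity reads $\int_{K_n}u'v\,ds=-\langle\frac{\partial u}{\partial\nu},v\rangle+\langle\Delta_{K_n}u,v\rangle-\int_{K_n}buv\,ds-\langle\theta_2(u),v\rangle+\int_{K_n}fv\,ds$ for all admissible $v$, which is the second equation of \eqref{pbforte} interpreted in $H^{-\frac12}(K_n)$. The initial condition $u(0,x)=u_0(x)$ is immediate, being part of the definition of the strict solution of \eqref{astratto}.

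The main obstacle I expect is the rigorous justification of the Green formula on the non-convex prefractal domain $\Omega$ and the resulting definition of the normal derivative as an element of $H^{-\frac12}(K_n)$ rather than of $L^2(K_n)$: it is the regularity already secured in Theorem \ref{aprioriest}, namely $u\in V^2_\sigma(\Omega,K_n)$ with $\gamma_0 u\in H^2(K_n)$, that guarantees $\Delta u\in L^2(\Omega)$, $\Delta_{K_n}u\in L^2(K_n)$ and the finiteness of every pairing above, so that the cancellation of the interior terms is legitimate and each boundary term is well defined. A secondary point of care is that the master identity must be shown to hold for a class of $v$ rich enough that their traces exhaust a dense subset of $H^{\frac12}(K_n)$, which again follows from the trace theory of Proposition \ref{traccia}; once this is in place, the passage from the weak identity to the strong formulation \eqref{pbforte} is routine.
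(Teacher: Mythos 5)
Your proposal is correct and follows essentially the same route as the paper: test against $C_0^{\infty}(\Omega)$ to extract the interior equation and deduce $\Delta u\in L^2(\Omega)$, then apply the generalised Green formula for Lipschitz domains to define $\frac{\partial u}{\partial\nu}\in H^{-\frac{1}{2}}(K_n)$, subtract the interior part from the full variational identity, and conclude by density of the traces in $H^{\frac{1}{2}}(K_n)$. The only cosmetic difference is that you package everything into a single ``master identity'' before specialising the test functions, and your closing appeal to Theorem \ref{aprioriest} is unnecessary, since $\Delta u=\frac{du}{dt}-f\in L^2(\Omega)$ already follows from the interior equation itself, exactly as you note earlier.
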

\begin{proof} For every fixed $t\in[0,T]$, we multiply the first equation in \eqref{astratto} by a test function $\varphi\in C^{\infty}_0 (\Omega)$ and then we integrate on $\Omega$. Then, by using \eqref{corr}, we obtain
\begin{center}
$\displaystyle\int_\Omega \frac{du}{dt}\,\varphi\,dx=\int_{\Omega} Au\,\varphi\,dx+\int_{\Omega} f\,\varphi\,dx=-E(u,\varphi)+\int_{\Omega} f\,\varphi\,dx$.
\end{center}
Since $\varphi$ has compact support in $\Omega$, after integrating by parts, we get
\begin{equation}\label{forte}
\frac{du}{dt}=\Delta u+f\quad\text{in $(C_0^{\infty} (\Omega))'$},
\end{equation}
then, by density, equation \eqref{forte} holds in $L^2(\Omega)$, so it holds for a.e. $x\in\Omega$. We remark that from this it follows that, for each fixed $t\in [0,T]$, $u\in V(\Omega):=\{u\in H^1 (\Omega)\,|\,\Delta u\in L^2(\Omega)\}$, where $\Delta u$ has to be intended in the distributional sense. Hence, we can apply Green formula for Lipschitz domains (see~\cite{baiocap}) which yields in particular that $\frac{\partial u}{\partial\nu}\in H^{-\frac{1}{2}} (K_n)$:
\begin{center}
$\displaystyle\int_{\Omega}\nabla u\,\nabla v\,dx=\left\langle\frac{\partial u}{\partial\nu},v\right\rangle-\int_{\Omega}\Delta u\,v\,dx$.
\end{center}
We now come to the dynamical boundary condition. Let $v\in V(\Omega,K_n)$. We take the scalar product in $L^2(\Omega,m)$ between the first equation in \eqref{astratto} and $v$, so we obtain
\begin{equation}\label{formvariaz}
\left(\frac{du}{dt},v\right)_{L^2(\Omega,m)}=(Au,v)_{L^2(\Omega,m)}+(f,v)_{L^2(\Omega,m)}.
\end{equation}
Then, by using again \eqref{corr}, we have that
\begin{center}
$\displaystyle\int_\Omega \frac{du}{dt}\,v\,dx+\int_{K_n} \frac{du}{dt}\,v\,ds=-\int_{\Omega}\nabla u\,\nabla v\,dx-\int_{K_n}\nabla_s u\,\nabla_s v\,ds-\int_{K_n} b\,u\,v\,ds-\langle\theta_2 (u),v\rangle+\int_{\Omega} f\,v\,dx+\int_{K_n} f\,v\,ds$.
\end{center}
Now, using Green formula for Lipschitz domains and the fact that equation \eqref{forte} holds a.e. in $\Omega$, we obtain $\forall\,v\in V(\Omega,K_n)$ and for each $t\in [0,T]$
\begin{equation}\label{bordo}
\int_{K_n} \frac{du}{dt}\,v\,ds=-\left\langle\frac{\partial u}{\partial\nu},v\right\rangle-\int_{K_n}\nabla_s u\,\nabla_s v\,ds-\int_{K_n} b\,u\,v\,ds-\langle\theta_2 (u),v\rangle+\int_{K_n} f\,v\,ds.
\end{equation}
Since $H^1(K_n)$ is dense in $H^{\frac{1}{2}} (K_n)$ (see \cite{baiocap}), we deduce that the boundary condition
\begin{equation}\label{bordo2}
\frac{du}{dt}-\Delta_{K_n}\,u=-\frac{\partial u}{\partial\nu}-bu-\theta_2(u)+f
\end{equation}
holds in $H^{-\frac{1}{2}} (K_n)$.
\end{proof}

\noindent We now prove a better regularity in space of the solution of problem \eqref{pbforte}.
\begin{teo}\label{esistenzagiusta} Let $\sigma$, $f$, $b$ and $\theta_2 (u)$ be as in Theorem \ref{aprioriest}. Then for every $t\in [0,T]$ the solution of problem \eqref{pbforte} belongs to $V^2_\sigma (\Omega,K_n)$, and the following inequality holds:
\begin{equation}\label{regolarita}
\|u\|^2_{H^1(\Omega)}+\|r^\sigma D^2 u\|^2_{L^2(\Omega)}+\|u\|^2_{H^2(K_n)}\leq C\left(\|f\|^2_{L^2(\Omega,m)}+\left\|\frac{du}{dt}\right\|^2_{L^2(\Omega,m)}\right),
\end{equation}
where $C$ depends on $\sigma$.
\end{teo}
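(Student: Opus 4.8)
The plan is to combine the conditional a priori estimate of Theorem \ref{aprioriest} with an energy estimate that controls the boundary $L^2$-norm by the data, so as to drop the term $\|u\|^2_{L^2(K_n)}$ from the right-hand side of \eqref{stima5}. First I would check that the strict solution actually lies in $V^2_\sigma(\Omega,K_n)$, so that Theorem \ref{aprioriest} is applicable. By Theorem \ref{strict} we have $u(t)\in D(A)\subset V(\Omega,K_n)$ for every $t$, so $u\in H^1(\Omega)$ and $\gamma_0 u\in H^1(K_n)$; from the strong formulation \eqref{pbforte} the bulk equation gives $\Delta u=\frac{du}{dt}-f\in L^2(\Omega)$ (both $\frac{du}{dt}$ and $f$ restrict to $L^2(\Omega)$), hence $u\in V(\Omega)$. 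There remain the two defining conditions of $V^2_\sigma(\Omega,K_n)$: the interior bound $r^\sigma D^2 u\in L^2(\Omega)$ follows from the weighted elliptic regularity for the Dirichlet problem \eqref{problemav} (Theorem 3.1, Chapter 2 in \cite{nazplam}), exactly as in the proof of Theorem \ref{aprioriest}; the boundary regularity $\gamma_0 u\in H^2(K_n)$ is obtained from the dynamical condition \eqref{bordo2}, writing $-\Delta_{K_n}u=-\frac{\partial u}{\partial\nu}-bu-\theta_2(u)+f-\frac{du}{dt}$ and showing, via the Munchhausen bootstrap of Theorem \ref{aprioriest}, that the right-hand side lies in $L^2(K_n)$; since $D(\Delta_{K_n})=H^2(K_n)$, this gives $\gamma_0 u\in H^2(K_n)$ and thus $u\in V^2_\sigma(\Omega,K_n)$.

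With membership secured, Theorem \ref{aprioriest} yields \eqref{stima5}, and it only remains to bound $\|u\|^2_{L^2(K_n)}$ by the data. For this I would exploit $u(t)\in D(A)$ together with $Au=\frac{du}{dt}-f$: testing the representation \eqref{corr} with $v=u$ gives
\begin{equation*}
E[u]=\left(-Au,u\right)_{L^2(\Omega,m)}=\left(f-\tfrac{du}{dt},u\right)_{L^2(\Omega,m)}\leq\left\|f-\tfrac{du}{dt}\right\|_{L^2(\Omega,m)}\,\|u\|_{L^2(\Omega,m)}.
\end{equation*}
By the coercivity of Proposition \ref{coer}, $E[u]\geq\bar C\,\|u\|^2_{V(\Omega,K_n)}\geq\bar C\,\|u\|^2_{L^2(\Omega,m)}$; dividing through by $\|u\|_{L^2(\Omega,m)}$, squaring, and using $\|f-\tfrac{du}{dt}\|^2\leq 2\|f\|^2+2\|\tfrac{du}{dt}\|^2$, I obtain
\begin{equation*}
\|u\|^2_{L^2(K_n)}\leq\|u\|^2_{L^2(\Omega,m)}\leq\frac{2}{\bar C^2}\left(\|f\|^2_{L^2(\Omega,m)}+\left\|\tfrac{du}{dt}\right\|^2_{L^2(\Omega,m)}\right),
\end{equation*}
where I used $dm=dx+ds$ to dominate the $L^2(K_n)$-norm by the $L^2(\Omega,m)$-norm.

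Substituting this bound into \eqref{stima5} replaces $\|u\|^2_{L^2(K_n)}$ by a multiple of $\|f\|^2_{L^2(\Omega,m)}+\|\tfrac{du}{dt}\|^2_{L^2(\Omega,m)}$, and collecting the constants gives precisely \eqref{regolarita}. I expect the genuine difficulty to lie not in this absorption, which is routine, but in the boundary regularity $\gamma_0 u\in H^2(K_n)$ underlying the applicability of Theorem \ref{aprioriest}: the nonlocal term $\theta_2(u)$, which behaves like a regional fractional Laplacian of order $\tfrac12$, must be controlled as an $L^2(K_n)$-perturbation, and it is exactly here that the gain $\theta_2(u)\in H^\beta(K_n)$ with $\beta<\tfrac12$ and the compact-embedding interpolation of Theorem \ref{aprioriest} are indispensable.
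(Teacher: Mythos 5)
Your second step --- eliminating $\|u\|^2_{L^2(K_n)}$ from the right-hand side of \eqref{stima5} by testing \eqref{corr} with $v=u$ and using the coercivity of $E$ to get $\|u\|^2_{L^2(\Omega,m)}\leq\frac{2}{\bar C^2}\bigl(\|f\|^2_{L^2(\Omega,m)}+\|\tfrac{du}{dt}\|^2_{L^2(\Omega,m)}\bigr)$ --- is correct and is a legitimate way to pass from \eqref{stima5} to \eqref{regolarita}; the paper leaves this absorption implicit. The problem is in your first step. Theorem \ref{aprioriest} is an \emph{a priori} estimate: its proof begins by \emph{assuming} that $u\in V^2_\sigma(\Omega,K_n)$, i.e.\ that $\frac{\partial u}{\partial\nu}$ and $\theta_2(u)$ already lie in $L^2(K_n)$ and that $\gamma_0u\in H^2(K_n)$, and the Munchhausen trick then absorbs the resulting finite quantities. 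You cannot invoke that same bootstrap to \emph{establish} the membership: to show $\frac{\partial u}{\partial\nu}\in L^2(K_n)$ you need $u\in H^2_\sigma(\Omega)$, which via the Dirichlet problem \eqref{problemav} needs the trace in $H^{\frac32}_\sigma(K_n)$, which in the paper is derived from $\gamma_0u\in H^2(K_n)$ --- precisely the regularity you are trying to prove. As written, your argument for $u\in V^2_\sigma(\Omega,K_n)$ is circular.

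What the paper actually does is rewrite \eqref{pbforte} as the elliptic system \eqref{pbforteell} with datum $f-\frac{du}{dt}\in L^2(\Omega,m)$ (which is available because $u$ is a strict solution) and then invoke the elliptic regularity theorem of \cite{crelannazver} (Theorem 3.3 there). That theorem supplies the missing existence-of-regularity ingredient: one introduces a family of operators $\mathcal{L}_\mu$ interpolating between the decoupled problem $\mathcal{L}_0$ (boundary heat equation solved first, then an interior Dirichlet problem, each of which is regular by classical theory) and the full problem $\mathcal{L}_1$; the a priori estimates of Theorem \ref{aprioriest} show that $\mathcal{L}_\mu-\mathcal{L}_0$, i.e.\ $\mu\bigl(0,\frac{\partial u}{\partial\nu}+\theta_2(u)\bigr)$, is a compact perturbation, and since $\ker\mathcal{L}_1$ is trivial by the uniqueness of Theorem \ref{strict}, the Fredholm alternative gives invertibility of $\mathcal{L}_1$ on $V^2_\sigma(\Omega,K_n)$. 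Some such continuation or approximation argument is indispensable here; the a priori estimate alone only converts membership into a quantitative bound, it does not produce the membership. Your proposal is missing exactly this idea.
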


\begin{proof} We rewrite problem \eqref{pbforte} as
\begin{equation}\label{pbforteell}
\begin{cases}
-\Delta u=f-\frac{du}{dt} &\text{in $\Omega$,}\\[2mm]
-\Delta_{K_n} u=-\frac{\partial u}{\partial\nu}-bu-\theta_2(u)+f-\frac{du}{dt}\quad &\text{on $K_n$}.
\end{cases}
\end{equation}
We note that, for every $t\in [0,T]$, $f-\frac{du}{dt}\in L^2(\Omega,m)$. Hence, from elliptic regularity results of Theorem 3.3 in \cite{crelannazver}, we deduce that for every $t\in [0,T]$ $u\in V^2_\sigma (\Omega,K_n)$ and \eqref{regolarita} holds.
\end{proof}


\section{Regularity results in fractional Sobolev spaces}\label{sec4}
\setcounter{equation}{0}

\noindent We now prove some regularity results for the strict solution $u$ of \eqref{pbforte}. 


\begin{teo}\label{regolare} Let $u$ be the solution of problem \eqref{pbforte}. Then, for every fixed $t\in [0,T]$, $u\in H^s (\Omega)$ for $s<\frac{7}{4}$.
\end{teo}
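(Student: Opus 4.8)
The plan is to convert the weighted second-order bound of Theorem~\ref{esistenzagiusta} into genuine fractional Sobolev regularity and then to optimize the weight exponent. By Theorem~\ref{esistenzagiusta}, for every fixed $t\in[0,T]$ and every $\sigma$ satisfying \eqref{boundsigma} we have $u\in V^2_\sigma(\Omega,K_n)$; in particular $u\in H^1(\Omega)$ and $r^\sigma D^2u\in L^2(\Omega)$, where $r$ is the distance to the vertex set. The crux is therefore the embedding
\[
\{u\in H^1(\Omega):\ r^\sigma D^2u\in L^2(\Omega)\}\hookrightarrow H^s(\Omega),\qquad s<2-\sigma .
\]
Granting this, the theorem follows by letting $\sigma\downarrow\frac14$ in \eqref{boundsigma}, since then $2-\sigma\uparrow\frac74$.

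First I would fix a partition of unity on $\overline\Omega$ adapted to the vertices $P_j$: one cutoff supported away from all vertices and one supported in a small neighbourhood of each $P_j$. On the region away from the vertices the weight $r^\sigma$ is bounded above and below by positive constants, so there $D^2u\in L^2$ and the corresponding piece of $u$ belongs to $H^2(\Omega)\subset H^s(\Omega)$; only the pieces localized at the vertices can lower the regularity, the binding ones being the reentrant vertices.

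The decisive estimate is local near a single vertex, which I place at the origin. I would split its neighbourhood into dyadic annuli $A_k=\{2^{-k-1}\le r\le 2^{-k}\}$ and rescale each to a fixed reference sector by $x=2^{-k}\hat x$. Writing $s=1+\tau$ with $\tau\in(0,1)$, the $H^s$-seminorm of $u$ coincides with the $H^\tau$ Gagliardo seminorm of $\nabla u$. On the reference sector, subtracting from the rescaled gradient its mean and combining the Poincaré inequality with the embedding $H^1\hookrightarrow H^\tau$ yields a scale-invariant bound of the gradient seminorm by $\|\widehat{D^2u}\|_{L^2}$. Restoring the scaling factors and using $\|D^2u\|_{L^2(A_k)}\approx 2^{k\sigma}\|r^\sigma D^2u\|_{L^2(A_k)}$, the diagonal part of the seminorm (pairs of points in the same annulus) is controlled by a series
\[
\sum_k 2^{-k(4-2s-2\sigma)}\,c_k^2,\qquad \sum_k c_k^2=\|r^\sigma D^2u\|^2_{L^2(\Omega)}<\infty,
\]
which converges exactly when $s\le 2-\sigma$.

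The main obstacle will be the off-diagonal part of the Gagliardo double integral, namely the pairs of points lying in different annuli $A_k,A_j$, which the rescaling above does not see. I would dispose of these by the standard splitting into adjacent and well-separated annuli: for $|k-j|\le 1$ the contribution is comparable to the diagonal one, while for $|k-j|$ large the decay of the kernel $|x-y|^{-(2+2\tau)}$ makes the double series summable, so that the full seminorm stays finite in the same range $s<2-\sigma$. Reassembling the localized pieces gives $u\in H^s(\Omega)$ for every $s<2-\sigma$ and every $\sigma$ in \eqref{boundsigma}, whence $u\in H^s(\Omega)$ for all $s<\frac74$. As a consistency check, $\frac74=1+\pi/\eta$ for the reentrant interior angle $\eta=\frac{4\pi}{3}$ of the Koch prefractal, whose corner singularity $\sim r^{3/4}$ is precisely what forces the restriction $\sigma>\frac14$ in the weighted estimate.
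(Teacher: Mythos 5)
Your strategy is sound and reaches the right exponent, but it takes a genuinely different route from the paper, so let me compare. The paper does not start from the weighted bound of Theorem \ref{esistenzagiusta} at all: it splits $u=\hat w+w$ into a harmonic part \eqref{aux1} carrying the boundary datum and a Dirichlet part \eqref{aux2} carrying the right-hand side $f-\frac{du}{dt}\in L^2(\Omega)$. For $w$ it invokes Kondrat'ev's corner estimates \cite{kond} to get $\delta^{\mu}D^{2}w\in L^2(\Omega)$ for $\mu>1/4$ (with $\delta$ the distance to the boundary), and for $\hat w$ it first subtracts an $H^2(\Omega)$ extension of the trace $u|_{K_n}\in H^{3/2}(K_n)$ --- this is where the boundary regularity $u\in H^2(K_n)$ established in Section \ref{secapriori} enters --- and then treats the difference as another Dirichlet problem with $L^2$ datum. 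In both cases the passage from the weighted $H^2$ bound to $H^{2-\mu}(\Omega)$ is delegated to Proposition 4.15 of Jerison--Kenig \cite{jerison}, which is precisely the embedding you set out to prove by hand. Your route is shorter at the front end (you quote Theorem \ref{esistenzagiusta}, which already packages the Venttsel'-specific work of Theorem \ref{aprioriest}) and longer at the back end (you reprove the weighted-to-fractional embedding by dyadic rescaling); note that since the distance $r$ to the vertices dominates the distance $\delta$ to the boundary, $r^{\sigma}D^2u\in L^2$ implies $\delta^{\sigma}D^2u\in L^2$, so you could also have concluded directly by citing \cite{jerison}. What the paper's decomposition buys in addition is the auxiliary fact $\partial\hat w/\partial\nu\in L^2(K_n)$ from \cite{kenig}; what yours buys is self-containedness and a transparent identification of the critical exponent $s<2-\sigma\to\frac74$ as $\sigma\downarrow\frac14$.

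One point in your sketch needs tightening: for well-separated annuli $A_k$, $A_j$ the decay of the kernel alone does not make the double series converge, because the factor $|x-y|^{-(2+2\tau)}\lesssim 2^{\min(k,j)(2+2\tau)}$ must be paired with smallness of $\nabla u$ itself on the inner annuli, not merely with its oscillation inside a single annulus. The standard fix, which your weighted bound does support, is to write $\nabla u=(\nabla u-c_k)+c_k$ on $A_k$, with $c_k$ the mean of $\nabla u$ over $A_k$: the oscillation pieces satisfy $\|\nabla u-c_k\|_{L^2(A_k)}\lesssim 2^{-k(1-\sigma)}\|r^{\sigma}D^2u\|_{L^2(A_k)}$ by the rescaled Poincar\'e inequality, while the constants enter only through the telescoped differences $|c_k-c_j|\le\sum_{i}|c_i-c_{i+1}|$, each of which is again controlled by the weighted second derivatives on adjacent annuli. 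With that insertion the off-diagonal sums close in the same range $s<2-\sigma$, and your argument is complete.
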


\begin{proof} Let us consider for every fixed $t\in [0,T]$ the weak solutions $w$ and $\hat{w}$ in $H^1 (\Omega)$ of the following auxiliary problems:
\begin{equation}\label{aux1}
\begin{cases}
\Delta\hat{w}=0\quad &\text{in}\,\Omega\\
\hat{w}=u &\text{on}\, K_n,
\end{cases}
\end{equation}
\begin{equation}\label{aux2}
\begin{cases}
-\Delta w=-\frac{du}{dt}+f\quad &\text{in}\,\Omega\\
w=0 &\text{on}\, K_n.
\end{cases}
\end{equation}
We point out that the regularity of the solution $u$ of problem \eqref{pbforte} follows from the regularity of $\hat{w}$ and $w$ since
\begin{equation}\label{relaz}
u=\hat{w}+w.
\end{equation}
From Theorems 2 and 3 in~\cite{kenig}, it follows that
\begin{equation}\label{jerken}
\displaystyle\frac{\partial\hat{w}}{\partial\nu}\in L^2 (K_n).
\end{equation}
As to the solution $w$ of problem \eqref{aux2}, we remark that the right-hand side of the first equation belongs to $L^2 (\Omega)$. From Kondrat'ev regularity results for the solutions of elliptic problems in corners (see \cite{kond}), since $f-\frac{du}{dt}\in L^2_{\mu}(\Omega)$ for $\mu>1/4$ (taking into account that the angles in $\Omega$ have opening equal to $\frac{\pi}{3}$ or $\frac{4\pi}{3}$), we get 
\begin{equation}\label{sob}
\|\delta^{\mu}D^{\alpha} w\|^2_{L^2(\Omega)}\leq C(\mu,n)\left\|f-\frac{du}{dt}\right\|_{L^2(\Omega)}\quad\text{for $|\alpha|=2$ and $\mu>\frac{1}{4}$},
\end{equation}
where $\delta$ denotes the distance from the boundary. Now, by using Proposition 4.15 in~\cite{jerison}, we have
\begin{center}
$\displaystyle\|w\|_{H^{2-\mu}(\Omega)}\leq c\left\{\|\delta^{\mu}\sum_{|\alpha|=2} D^{\alpha} w\|^2_{L^2(\Omega)}+\|w\|_{H^1(\Omega)}\right\}^{\frac{1}{2}}$
\end{center}
and from \eqref{sob} it follows that $w\in H^s (\Omega)$ for $s<7/4$.\\
We now prove that $\hat{w}$ has the same regularity of $w$. Since $u\in H^2(K_n)$, in particular $u$ belongs to $H^{\frac{3}{2}}(K_n)$. Then from the trace theorem (Proposition \ref{traccia}) there exists a function $\tilde{u}$ which belongs to $H^2(\Omega)$ and such that $\gamma_0\tilde{u}=u$. If we consider then the function $\tilde{w}=\hat{w}-\tilde{u}$, this function belongs to $H^1(\Omega)$ and it is the weak solution of the auxiliary problem
\begin{equation}\label{aux3}
\begin{cases}
\Delta\tilde{w}=-\Delta\tilde{u} \quad &\text{in $\Omega$} \\
\tilde{w}=0 &\text{on $K_n$}.
\end{cases}
\end{equation}
Analogously, since $\Delta\tilde{u}$ belongs to $L^2(\Omega)$, we obtain that $\tilde{w}$ belongs to $H^2_{\mu}(\Omega)$ for $\mu>\frac{1}{4}$. This in particular implies that $\hat{w}$ belongs to $H^2_{\mu}(\Omega)$ for $\mu>\frac{1}{4}$, hence from Proposition 4.15 in~\cite{jerison} it follows that $u\in H^s(\Omega)$ for $s<\frac{7}{4}$.
\end{proof}

\begin{oss} By proceeding as in Theorem 4.2 in~\cite{lanmos}, with the obvious changes, we can prove that $u\in H^2_{\mu} (\Omega)$ for $\mu>1/4$, with weight given by the distance from the reentrant vertices (i.e. the vertices of the prefractal curve $K_n$).
\end{oss}

\begin{oss} From Theorem \ref{regolare}, we have that the solution of problem $(P)$ is the solution of the following problem: for every $t\in [0,T]$,
\begin{equation}\label{pbforte2}
\begin{cases}
\frac{du}{dt}=\Delta u+f &\text{in $L^2(\Omega)$,}\\[2mm]
\frac{du}{dt}=-\frac{\partial u}{\partial\nu}+\Delta_{K_n} u-bu-\theta_2(u)+f\quad &\text{in $L^2 (K_n)$},\\[2mm]
u(0,x)=u_0(x) &\text{a.e. $x\in\overline\Omega$}.
\end{cases}
\end{equation}
where $\Delta_{K_n}$ is the piecewise tangential Laplacian.
\end{oss}

\medskip
\noindent Since
\begin{equation}\label{eps}
u\in H^{\frac{7}{4}-\varepsilon} (\Omega)\quad\forall\,\varepsilon>0,
\end{equation}
from Sobolev embedding theorems (see Theorem 1.4.5.2 in~\cite{grisvard}), we have that
\begin{center}
$u\in C^{0,\delta} (\overline{\Omega})$ with $\delta=\frac{3}{4}-\varepsilon$.
\end{center}
We remark that, just knowing that $u\in H^1(K_n)$, from Sobolev embedding theorems we deduce that $u\in C^{0,\frac{1}{2}} (K_n)$. From Theorem \ref{regolare} we obtain a better regularity for the solution $u$ of problem \eqref{pbforte2}.\\
Moreover, since $u\in H^1(K_n)$ and $\nabla u\in H^{\frac{3}{4}-\varepsilon} (\Omega)$, from the trace theorem we have $\nabla u|_{K_n}\in H^{s_1} (K_n)$ for $0<s_1<\frac{1}{4}$.
\begin{oss}\label{sompropsol} For the reader's convenience, we now summarize the main regularity properties of the solution of problem \eqref{pbforte2} which will turn crucial in order to prove the a priori error estimates in Section \ref{sec5}: for every $t\in [0,T]$
\begin{center}
$\begin{cases}
\frac{du}{dt}=\Delta u+f &\text{in $L^2(\Omega)$,}\\[2mm]
\frac{du}{dt}=-\frac{\partial u}{\partial\nu}+\Delta_{K_n} u-bu-\theta_2(u)+f\quad &\text{in $L^2 (K_n)$},\\[2mm]
u(0,x)=u_0(x) &\text{a.e. $x\in\overline\Omega$}.
\end{cases}$
\end{center}
$u$ belongs to $H^2_\mu (\Omega)$ for $\mu>1/4$, it is H\"older continuous on $\overline\Omega$ with $\delta=\frac{3}{4}-\varepsilon$ and its trace belongs to $H^2(K_n)$.
\end{oss}

\section{A priori error estimates}\label{sec5}
\setcounter{equation}{0}

\noindent We start this section by proving some a priori estimates for the solution $u$ of problem \eqref{pbforte2}.

\begin{prop}\label{stabilita} Let $u$ be the solution of problem \eqref{pbforte2}. Then, for every fixed $t\in [0,T]$ it holds:
\begin{equation}\label{stima2}
\|u(t)\|^2_{L^2(\Omega,m)}+\int_0^t \|u(\tau)\|_{V(\Omega,K_n)}^2\,d\tau\leq C\left(\|u_0\|^2_{L^2(\Omega,m)}+\int_0^t \|f(\tau)\|^2_{L^2(\Omega,m)}\,d\tau\right),
\end{equation}
where $C$ is a constant depending on $t$ and on the coercivity constant of $E$.
\end{prop}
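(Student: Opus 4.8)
The plan is to perform a standard energy estimate: I would test the abstract equation against the solution itself in $L^2(\Omega,m)$ and then combine the coercivity of the energy form with Gronwall's lemma. Fix $t\in[0,T]$. Since $u$ is the strict solution, Theorem \ref{strict} gives $u\in C^1([0,T];L^2(\Omega,m))\cap C^0([0,T];D(A))$, so every term below is well defined. Taking the scalar product in $L^2(\Omega,m)$ of the first equation in \eqref{astratto} with $u(t)$ yields
\begin{equation}\notag
\left(\frac{du}{dt},u\right)_{L^2(\Omega,m)}=(Au,u)_{L^2(\Omega,m)}+(f,u)_{L^2(\Omega,m)}.
\end{equation}
The left-hand side equals $\tfrac{1}{2}\frac{d}{dt}\|u\|^2_{L^2(\Omega,m)}$, and by the correspondence \eqref{corr} we have $(Au,u)_{L^2(\Omega,m)}=-E[u]$, so that
\begin{equation}\notag
\frac{1}{2}\frac{d}{dt}\|u\|^2_{L^2(\Omega,m)}+E[u]=(f,u)_{L^2(\Omega,m)}.
\end{equation}

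Next I would invoke the coercivity of $E$ from Proposition \ref{coer}, namely $E[u]\geq\bar{C}\|u\|^2_{V(\Omega,K_n)}$, and bound the forcing term by Cauchy--Schwarz and Young's inequality as $(f,u)_{L^2(\Omega,m)}\leq\tfrac{1}{2}\|f\|^2_{L^2(\Omega,m)}+\tfrac{1}{2}\|u\|^2_{L^2(\Omega,m)}$. Using that the $V(\Omega,K_n)$-norm controls the $L^2(\Omega,m)$-norm, these combine into the differential inequality
\begin{equation}\notag
\frac{d}{dt}\|u\|^2_{L^2(\Omega,m)}+2\bar{C}\|u\|^2_{V(\Omega,K_n)}\leq\|f\|^2_{L^2(\Omega,m)}+\|u\|^2_{L^2(\Omega,m)}.
\end{equation}

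Finally, I would discard the non-negative coercivity term and apply Gronwall's lemma to obtain the pointwise bound
\begin{equation}\notag
\|u(t)\|^2_{L^2(\Omega,m)}\leq e^t\left(\|u_0\|^2_{L^2(\Omega,m)}+\int_0^t\|f(\tau)\|^2_{L^2(\Omega,m)}\,d\tau\right).
\end{equation}
Substituting this back into the integrated differential inequality controls $\int_0^t\|u(\tau)\|^2_{L^2(\Omega,m)}\,d\tau$ and hence the term $\int_0^t\|u(\tau)\|^2_{V(\Omega,K_n)}\,d\tau$, yielding the claimed estimate with a constant $C$ depending on $t$ and on the coercivity constant $\bar{C}$ of $E$.

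The computation is essentially routine. The only points deserving care are the justification of the identity $(\frac{du}{dt},u)_{L^2(\Omega,m)}=\tfrac{1}{2}\frac{d}{dt}\|u\|^2_{L^2(\Omega,m)}$, which is guaranteed by the strict-solution regularity, and the treatment of the nonlocal term $\theta_2$; the latter presents no obstacle here, since it enters only through $E[u]$, where it appears as the non-negative quantity $\langle\theta_2(u),u\rangle$ and is therefore already absorbed by the coercivity bound.
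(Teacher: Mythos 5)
Your proposal is correct and follows exactly the route the paper takes: test the weak formulation with $v=u(t)$, use the coercivity of $E$ together with Young's inequality, and conclude by Gronwall. The paper states this argument in one line without carrying out the computation, so your write-up is simply a fleshed-out version of the same proof.
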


\begin{proof} We write the weak formulation of problem \eqref{pbforte2}: for each $t\in [0,T]$,
\begin{center}
$\displaystyle \left(\frac{du}{dt}(t),v\right)_{L^2(\Omega,m)}+E(u(t),v)=(f(t),v)_{L^2(\Omega,m)}\quad\forall\,v\in V(\Omega,K_n)$.
\end{center}
If we choose $v=u(t)$, thanks to the coercivity of $E$ and Young and Gronwall inequalities, the thesis follows.
\end{proof}

\begin{teo} Let $u$ be the solution of problem \eqref{pbforte2}. Then it holds that
\begin{equation}\label{stima1}
\begin{split}
&\displaystyle\int_0^T \left\|\frac{d u}{dt}(\tau)\right\|^2_{L^2(\Omega,m)}\,d\tau+\sup_{t\in [0,T]}\|u(t)\|^2_{V(\Omega,K_n)}\\
&\leq\frac{1}{\min\{1,\bar{C}\}}\left(\tilde{C}\|u_0\|^2_{V(\Omega,K_n)}+\int_0^T \|f(\tau)\|^2_{L^2(\Omega,m)}\,d\tau\right),
\end{split}
\end{equation}
where $\bar{C}$ is the coercivity constant of $E$, while the constant $\tilde{C}$ depends on $n$.
\end{teo}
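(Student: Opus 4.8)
The plan is to run the classical parabolic energy argument, testing the equation against the time derivative of the solution and exploiting the symmetry of $E$ so that the quadratic form becomes an exact time derivative. I would start from the weak formulation used in the proof of Proposition \ref{stabilita}, namely $\left(\frac{du}{dt}(t),v\right)_{L^2(\Omega,m)}+E(u(t),v)=(f(t),v)_{L^2(\Omega,m)}$ for all $v\in V(\Omega,K_n)$, but instead of choosing $v=u(t)$ I would take $v=\frac{du}{dt}(t)$. Equivalently, since $u(t)\in D(A)$ and $-Au(t)=\frac{du}{dt}(t)-f(t)$, I would pair identity \eqref{corr} with $\frac{du}{dt}(t)$ in $L^2(\Omega,m)$; this only uses the Hilbert inner product and avoids inserting $u'(t)$ into the bilinear form directly.

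Using the symmetry of $E$ I would then write $E\left(u(t),\frac{du}{dt}(t)\right)=\frac{1}{2}\frac{d}{dt}E[u(t)]$, obtaining the pointwise-in-time identity $\left\|\frac{du}{dt}(t)\right\|^2_{L^2(\Omega,m)}+\frac{1}{2}\frac{d}{dt}E[u(t)]=\left(f(t),\frac{du}{dt}(t)\right)_{L^2(\Omega,m)}$. A Young inequality on the right-hand side absorbs half of the $\|u'\|^2$ term and yields the differential inequality $\left\|\frac{du}{dt}(t)\right\|^2_{L^2(\Omega,m)}+\frac{d}{dt}E[u(t)]\leq\|f(t)\|^2_{L^2(\Omega,m)}$. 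Note that no Gronwall lemma is needed: because $E$ is symmetric, the full form, including the lower order term $\int_{K_n}b|u|^2$ and the nonlocal term $\langle\theta_2(u),u\rangle$, collapses into the single exact derivative $\frac{d}{dt}E[u(t)]$.

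Integrating over $[0,t]$ gives $\int_0^t\left\|\frac{du}{dt}\right\|^2_{L^2(\Omega,m)}\,d\tau+E[u(t)]\leq E[u_0]+\int_0^t\|f\|^2_{L^2(\Omega,m)}\,d\tau$. I would then invoke Proposition \ref{coer}: coercivity gives $E[u(t)]\geq\bar{C}\|u(t)\|^2_{V(\Omega,K_n)}$, so the left-hand side is bounded below by $\min\{1,\bar{C}\}\left(\int_0^t\|u'\|^2_{L^2(\Omega,m)}\,d\tau+\|u(t)\|^2_{V(\Omega,K_n)}\right)$, while continuity of $E$ gives $E[u_0]\leq\tilde{C}\|u_0\|^2_{V(\Omega,K_n)}$, with $\tilde{C}$ the continuity constant (it involves $\max_{K_n}b$ and the trace and embedding constants on $K_n$, hence depends on $n$). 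Dividing by $\min\{1,\bar{C}\}$ and bounding $\int_0^t\|f\|^2\leq\int_0^T\|f\|^2$ yields, for every $t$, a uniform bound on $\int_0^t\|u'\|^2\,d\tau+\|u(t)\|^2_{V(\Omega,K_n)}$; taking $t=T$ for the dissipation integral and the supremum over $t\in[0,T]$ for the energy term delivers the claimed inequality.

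The delicate point is the rigorous justification of the energy identity, i.e. that $t\mapsto E[u(t)]$ is differentiable with $\frac{d}{dt}E[u(t)]=2E\left(u(t),\frac{du}{dt}(t)\right)$. The strict-solution regularity $u\in C^1([0,T];L^2(\Omega,m))\cap C^0([0,T];D(A))$ from Theorem \ref{strict} guarantees $\frac{du}{dt}(t)\in L^2(\Omega,m)$ but not a priori that $\frac{du}{dt}(t)\in V(\Omega,K_n)$, so testing the bilinear form with $u'(t)$ is not immediately legitimate. I would circumvent this by working first on $[\varepsilon,T]$, where the Remark following Theorem \ref{strict} provides the extra regularity $u\in C^{1,\alpha}([\varepsilon,T];L^2(\Omega,m))\cap C^{0,\alpha}([\varepsilon,T];D(A))$, and by replacing $A$ with its Yosida (or spectral Galerkin) approximation, for which all the above manipulations are licit; one then derives the estimate for the approximations, passes to the limit in the approximation parameter, and finally lets $\varepsilon\to 0^+$ using the continuity of $u$ at $t=0$ in $V(\Omega,K_n)$. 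This regularization step, rather than the algebra of the energy estimate, is where the real work lies.
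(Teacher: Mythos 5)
Your proof is correct and follows essentially the same route as the paper: the paper makes the energy identity $E\left(u,\frac{du}{dt}\right)=\frac{1}{2}\frac{d}{dt}E[u]$ rigorous via a Faedo--Galerkin approximation $u_N=\sum_j d_N^j\phi_j$ (the ``spectral Galerkin'' device you name), tests with $(d_N^j)'$, applies Young's inequality and coercivity, bounds $E[u_N(0)]\le\tilde C_2(n)\|u_0\|^2_{V(\Omega,K_n)}$ by the continuity of the form, and passes to the limit by weak convergence as in \cite{evans}. The detour through $[\varepsilon,T]$ and the Yosida approximation is unnecessary in this scheme, since the Galerkin approximants are smooth in time on all of $[0,T]$, but it is not wrong.
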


\begin{proof} In order to prove this estimate, we use the \emph{Faedo-Galerkin} method. Let $\{\phi_j\}_{j=1}^{\infty}$ be a complete orthonormal basis of $V(\Omega,K_n)$, and $V^N=\spann\{\phi_1,\dots,\phi_N\}$. We define $u_N\colon [0,T]\to V^N$ in the following way:
\begin{equation}\label{defu}
u_N(t):=\sum_{j=1}^N d_N^j (t)\,\phi_j,
\end{equation}
where we select the coefficients $d_N^j (t)$, for $t\in [0,T]$ and $j=1,\dots,N$ such that
\begin{equation}\label{fg1}
d_N^j (0)=(u_0,\phi_j)_{L^2(\Omega,m)}
\end{equation}
and
\begin{equation}\label{fg}
\left(\frac{d u_N}{dt}(t),\phi_j\right)_{L^2(\Omega,m)}+E(u_N (t),\phi_j)=(f(t),\phi_j)_{L^2(\Omega,m)}\quad\forall\,j=1,\dots,N.
\end{equation}
The existence and uniqueness of a function $u_N$ of the form \eqref{defu} follows from standard ODE theory.\\
Now we multiply equation \eqref{fg} by $(d_N^j)'(t)$. Then, by taking the sum on $j=1,\dots,N$, we obtain
\begin{equation}\label{fg2}
\left(\frac{d u_N}{dt}(t),\frac{d u_N}{dt}(t)\right)_{L^2(\Omega,m)}+E\left(u_N (t),\frac{d u_N}{dt}(t)\right)=\left(f(t),\frac{d u_N}{dt}(t)\right)_{L^2(\Omega,m)}.
\end{equation}
We now observe that, by using Cauchy-Schwartz and Young inequalities, we obtain
\begin{center}
$\displaystyle \left(f(t),\frac{d u_N}{dt}(t)\right)_{L^2(\Omega,m)}\leq\|f(t)\|_{L^2(\Omega,m)}\,\left\|\frac{d u_N}{dt}(t)\right\|_{L^2(\Omega,m)}\leq\frac{1}{2}\|f(t)\|^2_{L^2(\Omega,m)}+\frac{1}{2}\left\|\frac{d u_N}{dt}(t)\right\|^2_{L^2(\Omega,m)}$.
\end{center}
Next, we point out that
\begin{center}
$\displaystyle E\left(u_N,\frac{d u_N}{dt}\right)=\int_{\Omega} \nabla u_N\,\nabla\left(\frac{d u_N}{dt}\right)\,dx+\int_{K_n} \nabla_s u_N\,\nabla_s\left(\frac{d u_N}{dt}\right)\,ds+\int_{K_n} b\,u_N\,\frac{d u_N}{dt}\,ds+\iint_{K_n\times K_n}\frac{(u_N(x)-u_N(y))(\frac{d u_N}{dt}(x)-\frac{d u_N}{dt}(y))}{|x-y|^2}\,ds(x)\,ds(y)=\frac{1}{2}\frac{d}{dt} E[u_N]$.
\end{center}
Then, following these two remarks, \eqref{fg2} can be written in this way:
\begin{equation}\label{fg3}
\frac{1}{2}\left\|\frac{d u_N}{dt}(t)\right\|^2_{L^2(\Omega,m)}+\frac{1}{2}\frac{d}{dt} E[u_N]\leq\frac{1}{2}\|f(t)\|^2_{L^2(\Omega,m)}.
\end{equation}
Now, integrating \eqref{fg3} on $(0,t)$, using the coercivity of $E$ and taking the supremum on $[0,T]$ we obtain
\begin{equation}\label{stimaN}
\begin{split}
&\int_0^T \left\|\frac{d u_N}{dt}(\tau)\right\|^2_{L^2(\Omega,m)}\,d\tau+\sup_{t\in [0,T]}\|u_N(t)\|^2_{V(\Omega,K_n)}\\
&\leq\frac{1}{\min\{1,\bar{C}\}}\left(E[u_N(0)]+\int_0^T \|f(\tau)\|^2_{L^2(\Omega,m)}\,d\tau\right),
\end{split}
\end{equation}
where $\bar{C}$ is the coercivity constant of $E$. We now point out that, since $\displaystyle\langle\theta_2 (u),u\rangle\leq\tilde{C_1}(n)\|u\|^2_{H^1(K_n)}$, it follows that
\begin{center}
$\displaystyle E[u_N(0)]\leq\tilde{C}_2(n)\|u_0\|^2_{V(\Omega,K_n)}$.
\end{center}
Hence we get
\begin{equation}\label{stimaN2}
\begin{split}
&\int_0^T \left\|\frac{d u_N}{dt}(\tau)\right\|^2_{L^2(\Omega,m)}\,d\tau+\sup_{t\in [0,T]}\|u_N(t)\|^2_{V(\Omega,K_n)}\\
&\leq\frac{1}{\min\{1,\bar{C}\}}\left(\tilde{C}_2(n)\|u_0\|^2_{V(\Omega,K_n)}+\int_0^T \|f(\tau)\|^2_{L^2(\Omega,m)}\,d\tau\right),
\end{split}
\end{equation}
i.e. the thesis for $u_N$.\\
Now we want to prove the estimate for $u$. At first we observe that from \eqref{stimaN} it follows that $\frac{d u_N}{dt}\in L^2([0,T];L^2(\Omega,m))$ and $u_N\in L^{\infty} ([0,T];V(\Omega,K_n))$. Then, there exists a subsequence $\{\frac{d u_{N_l}}{dt}\}_{l=1}^{\infty}$ of $\{\frac{d u_N}{dt}\}$ such that $\frac{d u_{N_l}}{dt}\rightharpoonup\frac{du}{dt}$ in $L^2([0,T];L^2(\Omega,m))$ when $l\to+\infty$. Then we conclude as in Theorem 5, Section 7.1.3 in~\cite{evans}.
\end{proof}

\begin{corol} Let $u$ be the solution of problem \eqref{pbforte2}. Then it holds that
\begin{equation}\label{stima3}
\begin{split}
\int_0^T &\left(\left\|\frac{du}{dt}(\tau)\right\|^2_{L^2(\Omega,m)}+\|u(\tau)\|^2_{H^2_{\mu}(\Omega)}\right)\,d\tau+\sup_{t\in [0,T]}\|u(t)\|^2_{V(\Omega,K_n)}\\
& \leq C\left(\|u_0\|^2_{V(\Omega,K_n)}+\int_0^T \|f(\tau)\|^2_{L^2(\Omega,m)}\,d\tau\right),
\end{split}
\end{equation}
where $C$ is a constant depending on $\mu$, $n$, $T$ and the coercivity constant of $E$.
\end{corol}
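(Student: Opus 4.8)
The plan is to combine the two regularity and stability estimates already established in this section: the pointwise-in-time weighted $H^2_\mu$ bound from Remark \ref{sompropsol} (equivalently, the a priori estimate \eqref{regolarita} of Theorem \ref{esistenzagiusta}, which gives $\|u(t)\|^2_{H^2_\mu(\Omega)}\leq C(\|f(t)\|^2_{L^2(\Omega,m)}+\|\frac{du}{dt}(t)\|^2_{L^2(\Omega,m)})$) together with the time-integrated energy estimate \eqref{stima1}. The strategy is therefore to produce the $\int_0^T \|u(\tau)\|^2_{H^2_\mu(\Omega)}\,d\tau$ term by integrating the pointwise bound in time, and to control the resulting $\int_0^T\|\frac{du}{dt}(\tau)\|^2_{L^2(\Omega,m)}\,d\tau$ on the right-hand side using \eqref{stima1}.

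First I would fix $\mu>1/4$ and invoke the regularity result summarized in Remark \ref{sompropsol}: for each fixed $t\in[0,T]$ the solution satisfies $u(t)\in H^2_\mu(\Omega)$ with
\begin{equation}\notag
\|u(t)\|^2_{H^2_\mu(\Omega)}\leq C\left(\|f(t)\|^2_{L^2(\Omega,m)}+\left\|\frac{du}{dt}(t)\right\|^2_{L^2(\Omega,m)}\right),
\end{equation}
which is just \eqref{regolarita} (note $H^2_\mu$ here corresponds to the weighted $D^2$ control with $\sigma=\mu$, and the $H^1(\Omega)$ and $H^2(K_n)$ contributions only strengthen the bound). Integrating this inequality over $(0,T)$ yields
\begin{equation}\notag
\int_0^T \|u(\tau)\|^2_{H^2_\mu(\Omega)}\,d\tau\leq C\int_0^T\left(\|f(\tau)\|^2_{L^2(\Omega,m)}+\left\|\frac{du}{dt}(\tau)\right\|^2_{L^2(\Omega,m)}\right)\,d\tau.
\end{equation}

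Next I would add to this the entire left-hand side of \eqref{stima1}, namely $\int_0^T\|\frac{du}{dt}\|^2_{L^2(\Omega,m)}\,d\tau+\sup_{t}\|u(t)\|^2_{V(\Omega,K_n)}$, and then estimate the troublesome $\int_0^T\|\frac{du}{dt}(\tau)\|^2_{L^2(\Omega,m)}\,d\tau$ appearing on the right-hand side of the integrated bound by \eqref{stima1} itself, which controls it by $\frac{\tilde C}{\min\{1,\bar C\}}(\|u_0\|^2_{V(\Omega,K_n)}+\int_0^T\|f(\tau)\|^2_{L^2(\Omega,m)}\,d\tau)$. Collecting all terms and absorbing the various constants (which depend on $\mu$, $n$, $T$ and the coercivity constant $\bar C$ of $E$) into a single $C$ gives precisely \eqref{stima3}.

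The only delicate point, and the step I would be most careful about, is ensuring that the constant in the pointwise estimate \eqref{regolarita} is genuinely uniform in $t\in[0,T]$ so that the time integration is legitimate; this is guaranteed because that constant depends only on $\sigma=\mu$, on $b$, and on the fixed geometry of $K_n$, and not on $t$. Everything else is a matter of summing two inequalities and consolidating constants, so there is no substantive obstacle beyond this bookkeeping.
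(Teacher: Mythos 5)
Your proof is correct and follows essentially the same route as the paper, which simply combines the pointwise-in-time weighted regularity bound with the integrated energy estimate \eqref{stima1}. The only cosmetic difference is that you invoke \eqref{regolarita} from Theorem \ref{esistenzagiusta} for the pointwise $H^2_{\mu}$ control, while the paper cites the Kondrat'ev estimate \eqref{sob} together with \eqref{stima2}; these deliver the same bound, so the arguments coincide in substance.
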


\begin{proof} Estimate \eqref{stima3} follows from \eqref{sob}, \eqref{stima2} and \eqref{stima1}.
\end{proof}

\medskip

\noindent We now focus our attention on the numerical approximation of problem $(P)$. It will be carried out in two steps. In the former one we discretize by a Galerkin method the space variable only. We obtain an a priori error estimate for the semi-discrete solution. In the latter one we consider the fully discretized problem by a finite difference approach on the time variable.
\begin{figure}
	\centering
		\includegraphics[width=0.40\textwidth]{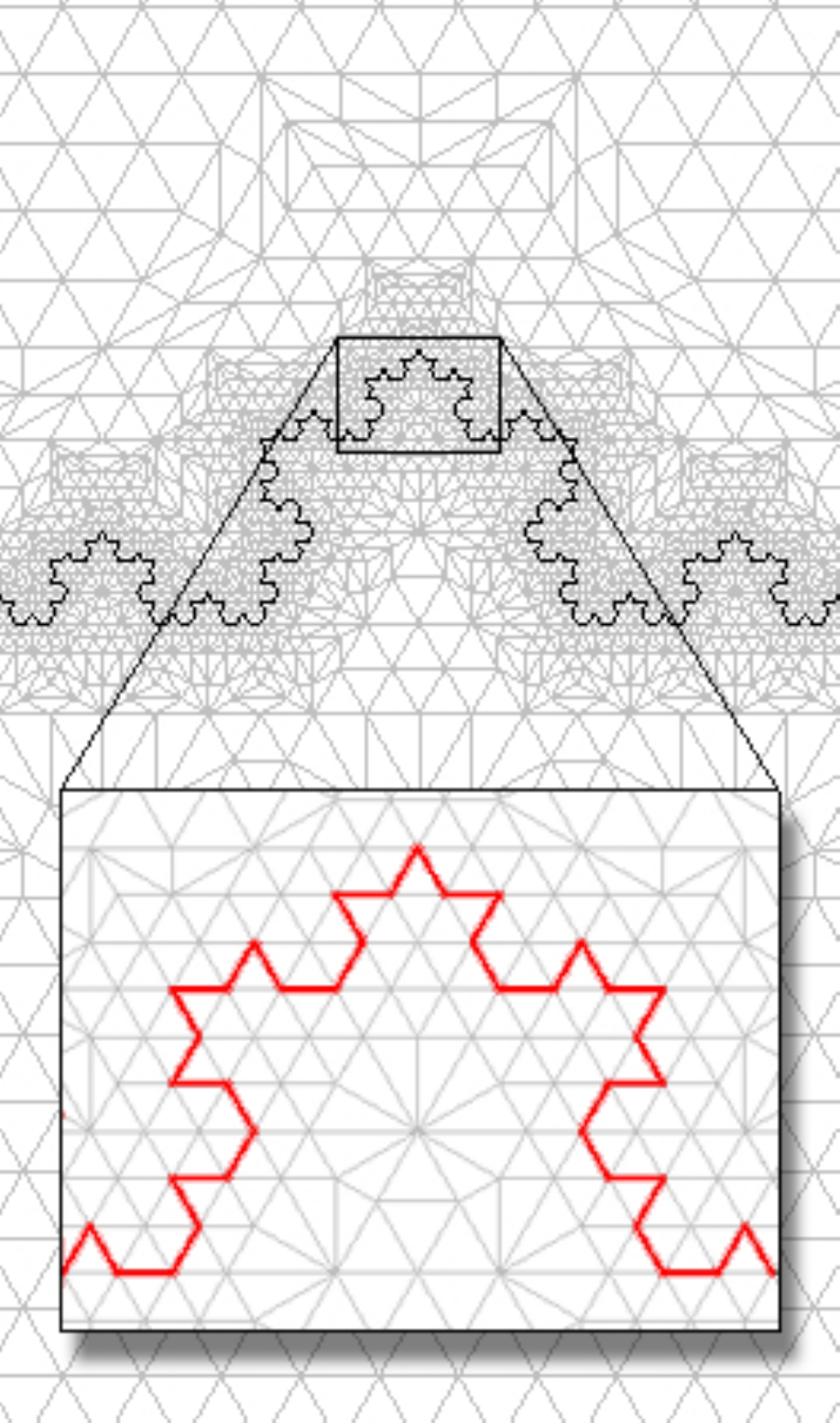}
	\caption{A zoom of the mesh produced by our algorithm.}
	\label{mesh}
\end{figure}
\noindent In order to obtain optimal a priori error estimates, we will use a suitable mesh (see \cite{cefalolancia} and \cite{lancef}) which is compliant with the so-called \emph{Grisvard conditions}. More precisely, since the solution $u$ is not in $H^2(\Omega)$, one has to use a suitable mesh refinement process which can guarantee an optimal rate of convergence. Here we do not give details on the mesh algorithm neither on its properties, we refer to \cite{cefalolancia} and \cite{lancef} for the details.\\
Our mesh refinement process generates a \emph{conformal} and \emph{regular} family of triangulations $\{T_{n,h}\}$, where $h=\max\{diam(S),\,S\in T_{n,h}\}$ is the size of the triangulation which is also compliant with the Grisvard conditions. We denote by $X_h:=\{v\in C^0 (\Omega)\,:\,v|_{S}\in\mathbb{P}_1\,\forall\, S\in T_{n,h}\}$, where $\mathbb{P}_1$ denotes the space of polynomial functions of degree one. Let $I_h\colon H^2_{\mu}(\Omega)\to X_h$, with $\mu>1/4$, be the $X_h$-interpolant operator, defined as:
\begin{center}
$I_h(u)|_{S}\in\mathbb{P}_1$ for every $S\in T_{n,h}$ and $I_h(u)=u$ at any vertex of any $S\in T_{n,h}$.
\end{center}
We note that $I_h$ is well defined since $u$ is in particular continuous on $\overline\Omega$ (see Remark \ref{sompropsol}). Moreover we suppose that the family of triangulations $\{T_{n,h}\}$ satisfies the hypothesis of Theorem 4.2 in~\cite{lancef}. We define the finite dimensional space of piecewise linear functions
\begin{center}
$X_{n,h}:=\{v\in C^0 (\overline{\Omega})\,:\,v|_{S}\in\mathbb{P}_1\,\forall\, S\in T_{n,h}\}$.
\end{center}
We set $V_{n,h}:=X_{n,h}\cap H^1 (\Omega)$. We have that $V_{n,h}\subset V(\Omega,K_n)$, it is a finite dimensional space of dimension $N_h$, where $N_h$ is the number of inner nodes of $T_{n,h}$. The semi-discrete approximation problem is the following:\\
for $f\in C^{0,\alpha} ([0,T];L^2 (\Omega,m))$, $u_h^0\in V_{n,h}$ such that $u_h^0\to u_0$ in $L^2(\Omega,m)$ and for each $t\in [0,T]$, find $u_{n,h}(t)\in V_{n,h}$ such that
\begin{equation}\label{semidiscreto}
(\overline{P_{n,h}})
\begin{cases}
\displaystyle\left(\frac{d u_{n,h}}{dt}(t),v_h\right)_{L^2(\Omega,m)}+E(u_{n,h}(t),v_h)=(f,v_h)_{L^2(\Omega,m)}\quad\forall\,v_h\in V_{n,h}\\[4mm]
u_{n,h} (0)=u_h^0.
\end{cases}
\end{equation}
\noindent The existence and uniqueness of the semi-discrete solution $u_{n,h}(t)\in V_{n,h}$ of problem $(\overline{P_{n,h}})$ follows since problem $(\overline{P_{n,h}})$ is a Cauchy problem for a system of first order linear ordinary differential equations with constant coefficients (see e.g. \cite{quarval}). By proceeding as in the proof of Proposition \ref{stabilita}, we can prove an estimate similar to \eqref{stima2}, hence we have the stability of the method.\\
\noindent We now recall some key estimates of the interpolation error (see Proposition 4, Lemma 1 and Theorem 5.1 in \cite{lancef} and the references listed in).

\begin{teo} Let $u(t)$ be the solution of problem \eqref{pbforte2} and let $I_h (u)$ be the interpolant polynomial of $u$. Then, for every $t\in [0,T]$ it holds that
\begin{equation}\label{eqstima}
\|\nabla(u(t)-I_h (u(t)))\|^2_{L^2(\Omega)}+\|u(t)-I_h (u(t))\|^2_{H^1(K_n)}\leq c\,h^2\left(\|u\|^2_{H^2_{\mu} (\Omega)}+\|u\|^2_{H^2 (K_n)}\right),
\end{equation}
where $c$ is a positive constant.
\end{teo}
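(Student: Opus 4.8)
The plan is to split the interpolation error into its two constituent pieces, the bulk gradient term over $\Omega$ and the trace term in $H^1(K_n)$, and to estimate each separately using standard finite element interpolation theory adapted to the weighted setting. The crucial structural fact, already established in Remark \ref{sompropsol} and Theorem \ref{regolare}, is that $u(t)\in H^2_\mu(\Omega)$ for $\mu>1/4$ with $\gamma_0 u\in H^2(K_n)$, and that $u$ is H\"older continuous on $\overline\Omega$, so that the $X_h$-interpolant $I_h(u)$ is well-defined. The triangulation $\{T_{n,h}\}$ is conformal, regular, and compliant with the Grisvard conditions; these are precisely the hypotheses under which the weighted interpolation estimates of \cite{lancef} apply.

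First I would treat the bulk term $\|\nabla(u(t)-I_h(u(t)))\|^2_{L^2(\Omega)}$. Since $u\in H^2_\mu(\Omega)$ is \emph{not} globally $H^2$ because of the reentrant corners, a uniform mesh would only yield a suboptimal rate. The point of the graded Grisvard-compliant mesh is to compensate for the weight $r^\mu$ (equivalently $\delta^\mu$) appearing in the second-derivative control \eqref{sob}: on triangles away from the singular vertices one applies the classical Bramble--Hilbert / C\'ea interpolation estimate, while on the small triangles clustered near the reentrant vertices the mesh grading is tuned so that the loss from the weight is exactly balanced by the smaller local mesh size. Summing the local contributions over all $S\in T_{n,h}$, the weighted norm reassembles into $\|u\|^2_{H^2_\mu(\Omega)}$ and one recovers the optimal factor $h^2$. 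This is exactly Proposition 4 and Theorem 5.1 of \cite{lancef}, which I would invoke.

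Next I would treat the trace term $\|u(t)-I_h(u(t))\|^2_{H^1(K_n)}$. Here one restricts to the polygonal boundary $K_n$: since $\gamma_0 u\in H^2(K_n)$, meaning $u$ is piecewise $H^2$ on each side $M$ of the prefractal curve, the interpolant on the induced one-dimensional mesh along $K_n$ satisfies the classical $H^1$ interpolation estimate for piecewise-linear approximation of an $H^2$ function, giving a local factor $h^2\|u\|^2_{H^2(\mathring M)}$ on each segment. Summing over the sides of $K_n$ produces $c\,h^2\|u\|^2_{H^2(K_n)}$; the commutation of the trace operator $\gamma_0$ with $I_h$ is guaranteed because the nodes of the triangulation include the vertices lying on $K_n$, so that $I_h(u)|_{K_n}$ is exactly the one-dimensional nodal interpolant of $\gamma_0 u$. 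This step uses Lemma 1 of \cite{lancef}. Combining the two bounds gives \eqref{eqstima}.

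The main obstacle is the bulk estimate near the reentrant vertices: the interplay between the weight exponent $\mu>1/4$, the opening angles $\tfrac{4\pi}{3}$, and the grading parameter of the mesh must be verified to lie in the admissible Grisvard range, so that the weighted second-derivative singularity is genuinely absorbed and the optimal rate $O(h)$ in the energy norm (hence $O(h^2)$ in the squared estimate) is not degraded. Since the paper explicitly constructs the mesh in \cite{cefalolancia,lancef} to be compliant with these conditions and states that the family $\{T_{n,h}\}$ satisfies the hypotheses of Theorem 4.2 in \cite{lancef}, I would cite those results rather than re-deriving the grading analysis, and the estimate then follows by the two-term decomposition above.
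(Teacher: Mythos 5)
Your proposal is correct and follows essentially the same route as the paper: the paper does not prove this theorem at all but simply recalls it from Proposition 4, Lemma 1 and Theorem 5.1 of \cite{lancef}, and your two-term decomposition (weighted Bramble--Hilbert estimate on the Grisvard-graded bulk mesh for the $H^2_\mu(\Omega)$ part, one-dimensional nodal interpolation on the conformal boundary mesh for the $H^2(K_n)$ part) is precisely the mechanism behind those cited results. Nothing is missing.
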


\begin{prop} Let $u(t)$ and $I_h(u(t))$ be as above. Then there exists a constant $C>0$ independent of the triangle $S$ such that
\begin{equation}\label{prop1}
\|u(t)-I_h(u(t))\|^2_{L^2(S)}\leq C\,h^4_S\frac{1}{\rho_S^{2\mu}}\,\sum_{|\alpha|=2}\int_S r(x)^{2\mu}\,|D^{\alpha} u(t)|^2\,dx,
\end{equation}
where $h_S$ is the diameter of the triangle $S\in T_{n,h}$ and $\rho_S$ is the radius of the biggest circle inscribed in $S$.
\end{prop}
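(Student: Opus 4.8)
The plan is to reduce the estimate to a fixed reference triangle by an affine change of variables and to prove there a weighted Bramble--Hilbert inequality adapted to the weight $r^{2\mu}$. Fix a reference triangle $\hat S$ and, for each $S\in T_{n,h}$, write the affine bijection $F_S(\hat x)=B_S\hat x+b_S$ with $F_S(\hat S)=S$. By the standard geometric estimates for a regular family one has $\|B_S\|\le C\,h_S$ and $\|B_S^{-1}\|\le C/\rho_S$, while the change of variables gives $\|u-I_h u\|_{L^2(S)}^2=|\det B_S|\,\|\hat u-\hat I\hat u\|_{L^2(\hat S)}^2$, where $\hat u:=u\circ F_S$ and $\hat I$ is the $\mathbb{P}_1$-Lagrange interpolant on $\hat S$. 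Here I use that interpolation commutes with the affine map, $(I_h u)\circ F_S=\hat I\hat u$, which is legitimate because $u$ is continuous on $\overline\Omega$ by Remark \ref{sompropsol}, so that the nodal values are well defined. I would then distinguish the triangles having one vertex of $\Omega$ as a node from those that stay away from the set of vertices.

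The heart of the argument is the following weighted interpolation estimate on the reference triangle, with the degenerate vertex placed at the origin: for $\mu$ in the admissible range $\tfrac14<\mu<1$ there is a constant $C=C(\mu,\hat S)$ such that
\begin{equation*}
\|\hat v-\hat I\hat v\|_{L^2(\hat S)}^2\le C\sum_{|\alpha|=2}\int_{\hat S}|\hat x|^{2\mu}\,|\hat D^\alpha\hat v|^2\,d\hat x
\end{equation*}
for every $\hat v$ for which the right-hand side is finite. This is a Deny--Lions/Bramble--Hilbert statement for the Muckenhoupt-admissible weight $|\hat x|^{2\mu}$ (admissible since $0<2\mu<2$): the weighted seminorm on the right vanishes exactly on $\mathbb{P}_1$, which $\hat I$ reproduces, and a compactness (Peetre--Tartar) argument upgrades the resulting quotient estimate to the stated inequality. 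The constraint $\mu>\tfrac14$ guarantees, through the weighted Sobolev embedding $H^2_\mu(\hat S)\hookrightarrow C^0(\overline{\hat S})$, that the nodal value of $\hat v$ at the degenerate vertex is controlled, so that $\hat I\hat v$ is well defined and bounded; this is precisely the compatibility with the range of $\mu$ already exploited in Remark \ref{sompropsol}.

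It then remains to transport the reference seminorm back to $S$. For a triangle having a vertex $P_j$ of $\Omega$ as a node I take $b_S=P_j$, so that $F_S^{-1}(x)=B_S^{-1}(x-P_j)$ and $r(x)=|x-P_j|$ near that node; hence $|\hat x|=|B_S^{-1}(x-P_j)|\le\|B_S^{-1}\|\,r(x)\le C\rho_S^{-1}r(x)$. Using the chain-rule bound $|\hat D^\alpha\hat u|^2\le C\|B_S\|^4\sum_{|\beta|=2}|(D^\beta u)\circ F_S|^2$ for $|\alpha|=2$ and changing variables, one obtains
\begin{equation*}
\sum_{|\alpha|=2}\int_{\hat S}|\hat x|^{2\mu}|\hat D^\alpha\hat u|^2\,d\hat x\le C\,\|B_S\|^4\,|\det B_S|^{-1}\rho_S^{-2\mu}\sum_{|\alpha|=2}\int_S r^{2\mu}|D^\alpha u|^2\,dx.
\end{equation*}
Inserting this together with $\|u-I_hu\|_{L^2(S)}^2=|\det B_S|\,\|\hat u-\hat I\hat u\|_{L^2(\hat S)}^2$ and the lemma above, the factors $|\det B_S|$ cancel and $\|B_S\|^4\le Ch_S^4$ yields exactly \eqref{prop1}. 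For triangles bounded away from the vertices the grading of the mesh (the Grisvard conditions satisfied by $\{T_{n,h}\}$) gives $r(x)\ge c\,\rho_S$ on $S$, so the classical unweighted $\mathbb{P}_1$ estimate $\|u-I_hu\|_{L^2(S)}^2\le Ch_S^4|u|_{H^2(S)}^2$ combined with $r^{-2\mu}\le C\rho_S^{-2\mu}$ gives the same bound directly. The main obstacle is the weighted reference estimate of the second paragraph: one must verify that $|\hat x|^{2\mu}$ is admissible for the whole range of $\mu$ in play and that the Lagrange nodal functional at the degenerate vertex is continuous on the weighted space, since the usual Bramble--Hilbert machinery does not apply verbatim once the weight degenerates at an interpolation node.
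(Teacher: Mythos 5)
The paper itself does not prove this proposition: it is recalled verbatim from Proposition 4 and Lemma 1 of \cite{lancef}, whose proof in turn follows the graded-mesh analysis of Section 8.4 of \cite{grisvard}. Your argument is therefore a genuine reconstruction rather than a paraphrase, and it is a legitimate alternative route. The difference is in how the corner triangles are handled: Grisvard and \cite{lancef} estimate $u-I_h u$ there by explicit one-dimensional Taylor expansions with integral remainder along the edges combined with Hardy's inequality, which is elementary, avoids any compactness argument, and never needs to discuss the weight as a Muckenhoupt weight; you instead push everything into a single weighted Deny--Lions/Bramble--Hilbert lemma on the reference triangle and then scale. Your scaling bookkeeping is correct (the factors $|\det B_S|$ cancel, $\|B_S\|^4\le Ch_S^4$, $|\hat x|\le\|B_S^{-1}\|\,r(x)\le C\rho_S^{-1}r(x)$ produces exactly the $\rho_S^{-2\mu}$ in \eqref{prop1}), so the entire burden sits on the reference-element lemma, which you state but do not prove. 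That lemma is true and provable exactly as you sketch --- the seminorm annihilates $\mathbb{P}_1$, the space $\{v\in H^1(\hat S):\ |\hat x|^{\mu}D^2v\in L^2(\hat S)\}$ embeds compactly into $H^1(\hat S)$ and continuously into $C^0(\overline{\hat S})$ for $\mu<1$ (so the nodal functional at the degenerate vertex is bounded and Peetre--Tartar applies) --- but you should make the restriction $\mu<1$ explicit; it is harmless here because $\mu$ may always be taken in $(\frac{1}{4},\frac{1}{2})$, consistently with \eqref{boundsigma}. Two smaller points: for the triangles not touching a vertex, the bound $\inf_S r\ge c\,\rho_S$ does not come from the Grisvard grading condition but from conformity plus shape regularity (a triangle without $P_j$ as a node is separated from $P_j$ by a neighbouring corner triangle of comparable size), and you should also note that $r(x)=|x-P_j|$ on a corner triangle only once $h$ is small enough that distinct vertices of $\Omega$ are well separated at the mesh scale. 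With those points filled in, your proof is complete; what the explicit Taylor--Hardy route of \cite{grisvard} buys in exchange is a self-contained elementary argument with transparent constants, while yours generalizes more readily to other weights and higher-order elements.
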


\begin{prop} Let $u(t)$ and $I_h(u(t))$ be as above. Then for every $t\in [0,T]$ there exists a constant $C>0$ such that
\begin{equation}\label{prop2}
\|u(t)-I_h(u(t))\|^2_{L^2(\Omega)}\leq C\,h^4\|u(t)\|^2_{H^2_{\mu}}.
\end{equation}
\end{prop}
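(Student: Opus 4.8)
The plan is to derive the global bound by summing the local estimate \eqref{prop1} over the triangulation and then absorbing the geometric prefactor into $h^4$ by means of the mesh grading. Writing
$$\|u(t)-I_h(u(t))\|^2_{L^2(\Omega)}=\sum_{S\in T_{n,h}}\|u(t)-I_h(u(t))\|^2_{L^2(S)},$$
the naive strategy would be to insert \eqref{prop1} on every $S$ and hope that $h_S^4\rho_S^{-2\mu}\le C h^4$ uniformly. This cannot hold on the coarse elements far from the vertices: there shape-regularity alone only gives $\rho_S\ge c\,h_S$, whence $h_S^4\rho_S^{-2\mu}\le C\,h_S^{4-2\mu}\approx C\,h^{4-2\mu}$, which is \emph{larger} than $h^4$. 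Hence a splitting of $T_{n,h}$ is unavoidable.

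First I would split the elements into a family $\mathcal{T}_{\mathrm{f}}$ of triangles whose closure avoids a fixed neighbourhood of the reentrant vertices and a complementary family $\mathcal{T}_{\mathrm{v}}$ of triangles clustering at the vertices. On $\mathcal{T}_{\mathrm{f}}$ the weight $r$ is bounded below by a positive constant $r_0$, so the solution is genuinely $H^2$ there and $r^{2\mu}$ is comparable to a constant; the standard (unweighted) interpolation inequality $\|u-I_h u\|_{L^2(S)}\le C\,h_S^2\,|u|_{H^2(S)}$ together with $h_S\le h$ and $\int_S|D^2u|^2\le r_0^{-2\mu}\int_S r^{2\mu}|D^2u|^2$ yields a contribution bounded by $C\,h^4\|u\|^2_{H^2_{\mu}(\Omega)}$. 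On $\mathcal{T}_{\mathrm{v}}$ I would instead keep the weighted estimate \eqref{prop1} and invoke the compliance of the mesh with the Grisvard conditions from \cite{cefalolancia} and \cite{lancef}: the grading refines the triangles towards the vertices with an exponent tuned to $\mu$, designed precisely so that $h_S^4\rho_S^{-2\mu}\le C\,h^4$ holds on the refined elements (the refinement being strongest at the vertices, where $h_S$ scales like $h^{1/(1-\mu)}$, so that $h_S^{4-2\mu}=h^{(4-2\mu)/(1-\mu)}\le h^4$ since $(4-2\mu)/(1-\mu)\ge 4$). Summing \eqref{prop1} over $\mathcal{T}_{\mathrm{v}}$ and using $\sum_S\int_S r^{2\mu}|D^2u|^2=\int_\Omega r^{2\mu}|D^2u|^2\le\|u\|^2_{H^2_{\mu}(\Omega)}$ then produces a second contribution of the same order. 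Adding the two contributions gives \eqref{prop2}. The regularity $u\in H^2_{\mu}(\Omega)$ with $\mu>1/4$ required on both families is available from Theorem \ref{regolare} and the subsequent remark, and the continuity of $u$ on $\overline\Omega$ guarantees that $I_h u$ is well defined.

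The step I expect to be the main obstacle is the control of the geometric prefactor $h_S^4\rho_S^{-2\mu}$ on the elements adjacent to the reentrant vertices: this is exactly where shape-regularity is insufficient and where one must use the explicit grading and the Grisvard conditions of the mesh. Verifying that the refinement exponent is tuned so that $h_S^4\rho_S^{-2\mu}\le C\,h^4$ on every refined element, while simultaneously ensuring that the far elements carry a comparable weight $r^{2\mu}\approx 1$, is the technical heart of the estimate; once the prefactor is under control, the remaining steps reduce to routine summation over $S$ and the embedding of the weighted norm.
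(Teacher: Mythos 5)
The first thing to say is that the paper does not prove this proposition at all: it is one of the interpolation estimates explicitly \emph{recalled} from Proposition 4, Lemma 1 and Theorem 5.1 of \cite{lancef}, so there is no internal argument to compare yours against. Your reconstruction is the standard Grisvard-type argument and its overall architecture (local estimate, split of the triangulation, absorption of the geometric prefactor via the mesh grading) is the right one.

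There is, however, a flaw in how you organize the decomposition. The correct split is not ``triangles contained in a fixed neighbourhood of the reentrant vertices'' versus the rest, but ``triangles touching a vertex'' versus ``triangles not touching any vertex''. On a triangle $S$ touching a vertex your computation is fine: shape regularity gives $h_S^4\rho_S^{-2\mu}\le C\,h_S^{4-2\mu}$, and the grading $h_S\lesssim h^{1/(1-\mu)}$ gives $h_S^{4-2\mu}\le C\,h^{(4-2\mu)/(1-\mu)}\le C\,h^4$. But for a refined triangle that lies near a vertex without touching it, the Grisvard condition only controls $h_S$ through $\inf_S r$, namely $h_S\lesssim h\,(\inf_S r)^{\mu}$, and the bound $h_S^4\rho_S^{-2\mu}\le C\,h_S^{4-2\mu}\le C\,h^4$ you assert for the whole near-vertex family simply fails on its outer layers (where $h_S\sim h$ and $h_S^{4-2\mu}\sim h^{4-2\mu}\gg h^4$); no choice of a fixed neighbourhood radius avoids this. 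The repair is exactly the mechanism you already use on the far family: on \emph{every} triangle not touching a vertex one has $r\ge\inf_S r>0$, hence the unweighted estimate gives $\|u-I_hu\|^2_{L^2(S)}\le C\,h_S^4\,(\inf_S r)^{-2\mu}\int_S r^{2\mu}|D^2u|^2\,dx$, and the graded condition $h_S\lesssim h\,(\inf_S r)^{\mu}$ (tuned for the $H^1$ error and a fortiori sufficient for the $L^2$ error) yields $h_S^4(\inf_S r)^{-2\mu}\le C\,h^4(\inf_S r)^{2\mu}\le C\,h^4$. With the two-case split ``touching / not touching'' the summation closes and gives \eqref{prop2}; as written, your intermediate elements fall through the cracks.
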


\noindent We now give an optimal error estimate with respect to the norm of $L^2([0,T];V(\Omega,K_n))$ for piecewise linear polynomials only.
\begin{teo}\label{stimaerrore} Let $u$ be the solution of \eqref{pbforte2} and $u_{n,h}$ be the discrete solution of \eqref{semidiscreto}. Then, for each $t\in [0,T]$, it holds that
\begin{center}\label{fordis}
$\displaystyle\|u(t)-u_{n,h}(t)\|_{L^2(\Omega,m)}^2+\int_0^t \|u(\tau)-u_{n,h}(\tau)\|_{V(\Omega,K_n)}^2\,d\tau\leq\|u_0-u_h^0\|^2_{L^2(\Omega,m)}+C\,h^2\int_0^t \|f(\tau)\|^2_{L^2(\Omega,m)}\,d\tau$,
\end{center}
where $C$ is a suitable constant independent of $h$.
\end{teo}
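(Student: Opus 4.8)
The plan is to run a standard Galerkin energy argument, adapted to the mixed measure $m=dx+ds$, to the nonlocal form $E$, and to the graded mesh. First I would write the weak formulation of problem \eqref{pbforte2} and subtract the semi-discrete problem \eqref{semidiscreto}: since $V_{n,h}\subset V(\Omega,K_n)$, choosing the same test function $v_h\in V_{n,h}$ in both yields the Galerkin orthogonality relation
\begin{equation}\notag
\left(\frac{d}{dt}(u-u_{n,h})(t),v_h\right)_{L^2(\Omega,m)}+E\big(u(t)-u_{n,h}(t),v_h\big)=0\qquad\forall\,v_h\in V_{n,h}.
\end{equation}
I would then split the error $e:=u-u_{n,h}$ as $e=\rho+\theta$, where $\rho:=u-I_h u$ is the interpolation error and $\theta:=I_h u-u_{n,h}\in V_{n,h}$ is an admissible test function.

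Next I would test the orthogonality relation with $v_h=\theta$ and substitute $e=\rho+\theta$, obtaining
\begin{equation}\notag
\left(\frac{d\theta}{dt},\theta\right)_{L^2(\Omega,m)}+E(\theta,\theta)=-\left(\frac{d\rho}{dt},\theta\right)_{L^2(\Omega,m)}-E(\rho,\theta).
\end{equation}
On the left I would use $\left(\frac{d\theta}{dt},\theta\right)_{L^2(\Omega,m)}=\frac12\frac{d}{dt}\|\theta\|^2_{L^2(\Omega,m)}$ and the coercivity of $E$ from Proposition \ref{coer}, namely $E(\theta,\theta)\geq\bar C\|\theta\|^2_{V(\Omega,K_n)}$. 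On the right I would bound both terms by Cauchy--Schwarz and Young's inequality, together with the continuity of $E$ (Proposition \ref{coer}), i.e. $|E(\rho,\theta)|\leq C\|\rho\|_{V(\Omega,K_n)}\|\theta\|_{V(\Omega,K_n)}$, choosing the Young parameter small enough to absorb the resulting $\|\theta\|^2_{V(\Omega,K_n)}$ into the coercive term on the left. The nonlocal contribution $\theta_2$ causes no extra difficulty here, since it is already folded into the continuity and coercivity of $E$.

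Integrating in time over $(0,t)$ and applying Gronwall's lemma to the residual $\|\theta\|^2_{L^2(\Omega,m)}$ term, I would arrive at a bound of the form
\begin{equation}\notag
\|\theta(t)\|^2_{L^2(\Omega,m)}+\int_0^t\|\theta(\tau)\|^2_{V(\Omega,K_n)}\,d\tau\leq C\left(\|\theta(0)\|^2_{L^2(\Omega,m)}+\int_0^t\Big(\|\rho(\tau)\|^2_{V(\Omega,K_n)}+\big\|\tfrac{d\rho}{d\tau}\big\|^2_{L^2(\Omega,m)}\Big)\,d\tau\right).
\end{equation}
At this point the interpolation estimates enter: \eqref{eqstima} and \eqref{prop2} control $\|\rho\|_{V(\Omega,K_n)}$ by $h$ times the weighted and boundary norms $\|u\|_{H^2_\mu(\Omega)}$ and $\|u\|_{H^2(K_n)}$, which is precisely where the graded, Grisvard-compliant mesh and the regularity collected in Remark \ref{sompropsol} are essential, since $u\notin H^2(\Omega)$. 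Using that $I_h$ commutes with $\partial_t$, the term $\frac{d\rho}{dt}=\frac{du}{dt}-I_h\frac{du}{dt}$ is treated analogously via \eqref{prop2}. Finally I would invoke the a priori regularity estimate \eqref{stima3} to replace the integrated $H^2_\mu(\Omega)$ and $H^2(K_n)$ norms by $\|f\|_{L^2(\Omega,m)}$ and the data, recover the overall rate $h^2$, bound $\|\theta(0)\|$ by $\|u_0-u_h^0\|$ plus an interpolation term through the triangle inequality, and conclude using $\|e\|\leq\|\rho\|+\|\theta\|$ together with the interpolation bound on $\rho$ itself.

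The step I expect to be the main obstacle is the term $\left(\frac{d\rho}{dt},\theta\right)_{L^2(\Omega,m)}$: controlling the time derivative of the interpolation error requires regularity of $\frac{du}{dt}$ compatible with the weighted estimate \eqref{prop2} on the graded mesh, and one must check that the mixed measure $m=dx+ds$, which couples the bulk and the boundary parts, does not degrade the $h^2$ rate when passing between them. A cleaner alternative would be to replace the nodal interpolant $I_h$ by the elliptic ($E$-)projection, for which this $\partial_t$-term drops out by orthogonality; but with the interpolation estimates \eqref{eqstima}--\eqref{prop2} already available, the direct route above is the most economical, provided the mesh grading near the reentrant vertices guarantees the optimal interpolation order.
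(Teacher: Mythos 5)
Your overall architecture (Galerkin orthogonality, energy test with coercivity and continuity of $E$, interpolation estimates on the graded mesh, a priori bounds to reach $\|f\|_{L^2(\Omega,m)}$) matches the paper's, but the specific decomposition you choose creates a genuine gap, and it is exactly the one you flag. Splitting $e=\rho+\theta$ with $\rho=u-I_hu$ and testing with $\theta$ leaves the term $\left(\frac{d\rho}{dt},\theta\right)_{L^2(\Omega,m)}$, and to preserve the $O(h^2)$ rate you would need $\left\|\frac{du}{dt}-I_h\frac{du}{dt}\right\|_{L^2(\Omega,m)}\leq C\,h^2\left(\left\|\frac{du}{dt}\right\|_{H^2_\mu(\Omega)}+\left\|\frac{du}{dt}\right\|_{H^2(K_n)}\right)$. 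No such regularity of $\frac{du}{dt}$ is established anywhere in the paper: Sections 3--5 give $u(t)\in H^2_\mu(\Omega)$ with trace in $H^2(K_n)$ and only $\frac{du}{dt}\in L^2([0,T];L^2(\Omega,m))$ (see \eqref{stima3} and Remark \ref{sompropsol}); the nodal interpolant $I_h$ is not even defined on $L^2$ functions, so this term can be neither estimated at order $h^2$ nor crudely absorbed without destroying the rate. Your proposed fallback, the elliptic projection, does not repair this: its orthogonality kills $E(\rho,\theta)$, not $\left(\frac{d\rho}{dt},\theta\right)$, so the same unavailable bound on $u_t-R_hu_t$ reappears.

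The paper's proof (Theorem 5.2 of \cite{lancef}, adapted) avoids the splitting precisely for this reason. One tests the error equation with $v_h=u_{n,h}(t)-I_h(u(t))$ and keeps the \emph{whole} error $e_{n,h}=u-u_{n,h}$, so that the time-derivative contribution appears as $\left(\frac{d e_{n,h}}{dt},u-I_hu\right)_{L^2(\Omega,m)}$: the derivative factor is now needed only in $L^2(\Omega,m)$, where it is controlled by the a priori estimates for $u$ and by the stability bound $\int_0^t\left\|\frac{du_{n,h}}{dt}\right\|^2_{L^2(\Omega,m)}d\tau\leq\int_0^t\|f\|^2_{L^2(\Omega,m)}d\tau$ for the semi-discrete solution, while the other factor is $O(h^2)$ in $L^2(\Omega,m)$ by \eqref{prop2}; a weighted Young inequality $ab\leq\frac{h^2}{2}a^2+\frac{1}{2h^2}b^2$ then distributes the factor $h^2$ onto both. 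In this way only spatial interpolation estimates for $u$ itself, namely \eqref{eqstima} and \eqref{prop2}, are used, and the conclusion follows from \eqref{stima3} and \eqref{intermedia2}. The rest of your argument (coercivity, continuity of $E$ including the nonlocal term $\theta_2$, Gronwall, triangle inequality to return from $\theta$ to $e$) is sound and coincides with the paper's; only the placement of the time derivative must be changed as above.
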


\noindent For the proof we refer to Theorem 5.2 in \cite{lancef} with small suitable changes.\\
We now consider the fully discretized problem, obtained by applying a finite difference scheme, the so-called $\theta$-method, on the time variable. It is well known that the $\theta$-method is unconditionally stable with respect both to the $L^2(\Omega)$ norm and to the $L^2(\Omega,m)$ norm provided $\frac{1}{2}\leq\theta\leq 1$. On the contrary, in the case of $0\leq\theta<\frac{1}{2}$, one has to assume that $\{T_{n,h}\}$ is a quasi-uniform family of triangulations and that a restriction on the time step holds. Since the peculiarity of our mesh $\{T_{n,h}\}$ is not to be quasi-uniform, from now on we assume $\frac{1}{2}\leq\theta\leq 1$. An error estimate between the semi-discrete solution $u_{n,h}(t)$ and the fully discrete one $u^l_{n,h}$ can be obtained as in Theorem 6.1 in \cite{lancef}. From this estimate and Theorem \ref{stimaerrore} we deduce the following convergence result.

\begin{teo} We set $t_l=l\Delta t$ for $l=0,1,\dots,\mathcal{M}$, $\Delta t>0$ being the time step and $\mathcal{M}$ being the integer part of $T/\Delta t$. Assume that $f\in C^{0,\alpha}([0,T]; L^2(\Omega,m))$ and $\frac{\partial f}{\partial t}\in L^2([0,T]\times\Omega, dt\times dm)$. Let $n$ be fixed and $u(t)$ be the solution of problem \eqref{pbforte2}, and let $u^l_{n,h}$ be the fully discretized solution with the same initial datum $u_h^0$ as given by the $\theta$-method with $\frac{1}{2}\leq\theta\leq 1$. Then for every $l=0,1,\dots,\mathcal{M}$
\begin{center}
$\displaystyle\|u(t_l)-u^l_{n,h}\|^2_{L^2(\Omega,m)}\leq \|u_0-u_h^0\|^2_{L^2(\Omega,m)}+C\,h^2 \left(\int_0^T \| f(\tau)\|^2_{L^2(\Omega,m)}\,d\tau\right)+C_\theta\,\Delta t^2\left(\left\|\frac{d u_{n,h}}{dt}(0)\right\|^2_{L^2(\Omega,m)}+\int_0^T \left\|\frac{\partial f}{\partial t} (\tau)\right\|^2_{L^2(\Omega,m)}\,d\tau\right)$,
\end{center}
where $C_\theta$ is a constant independent of $\mathcal{M}$, $\Delta t$ and $h$ and is a non-decreasing function of the continuity constant of $E(\cdot,\cdot)$ and $T$.
\end{teo}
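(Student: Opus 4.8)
The plan is to isolate the spatial and the temporal discretization errors by inserting the semi-discrete solution $u_{n,h}(t)$ of problem \eqref{semidiscreto} as an intermediate object and then to control each piece by a result that is already available. For each time node $t_l=l\Delta t$ I would first write the triangle inequality in the $L^2(\Omega,m)$ norm
\[
\|u(t_l)-u^l_{n,h}\|_{L^2(\Omega,m)}\leq\|u(t_l)-u_{n,h}(t_l)\|_{L^2(\Omega,m)}+\|u_{n,h}(t_l)-u^l_{n,h}\|_{L^2(\Omega,m)}
\]
and square it using $(a+b)^2\leq 2a^2+2b^2$. The first term on the right is the semi-discretization (spatial) error evaluated at $t=t_l$, while the second is the purely temporal error between the continuous-in-time Galerkin solution $u_{n,h}$ and its $\theta$-method approximation $u^l_{n,h}$, both of which live in the fixed finite-dimensional space $V_{n,h}$ and start from the same datum $u_h^0$.

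For the spatial term I would invoke Theorem \ref{stimaerrore} at $t=t_l\leq T$; discarding the nonnegative integral of the $V(\Omega,K_n)$-norm yields directly
\[
\|u(t_l)-u_{n,h}(t_l)\|^2_{L^2(\Omega,m)}\leq\|u_0-u_h^0\|^2_{L^2(\Omega,m)}+C\,h^2\int_0^T\|f(\tau)\|^2_{L^2(\Omega,m)}\,d\tau,
\]
which supplies exactly the first two contributions of the claimed bound. For the temporal term I would appeal to the $\theta$-method analysis carried out as in Theorem 6.1 of \cite{lancef}: the difference $u_{n,h}(t_l)-u^l_{n,h}$ satisfies the discrete evolution with a consistency residual, and for $\frac{1}{2}\leq\theta\leq 1$ the scheme is unconditionally stable in $L^2(\Omega,m)$, so no mesh-dependent restriction on $\Delta t$ is needed. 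A Taylor expansion of the residual, together with the regularity hypotheses $f\in C^{0,\alpha}([0,T];L^2(\Omega,m))$ and $\frac{\partial f}{\partial t}\in L^2([0,T]\times\Omega,dt\times dm)$, and the use of the semi-discrete equation to rewrite the second time derivative of $u_{n,h}$ in terms of $\frac{d u_{n,h}}{dt}(0)$ and $\frac{\partial f}{\partial t}$, produces
\[
\|u_{n,h}(t_l)-u^l_{n,h}\|^2_{L^2(\Omega,m)}\leq C_\theta\,\Delta t^2\left(\left\|\frac{d u_{n,h}}{dt}(0)\right\|^2_{L^2(\Omega,m)}+\int_0^T\left\|\frac{\partial f}{\partial t}(\tau)\right\|^2_{L^2(\Omega,m)}\,d\tau\right),
\]
with $C_\theta$ independent of $\mathcal{M}$, $\Delta t$ and $h$ by virtue of the unconditional stability. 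Adding the two estimates and relabelling constants (in the spirit of the convention that $C$ denotes possibly different constants) gives the thesis.

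The main obstacle will be the temporal estimate, not the spatial one. One has to verify that the consistency error of the $\theta$-method is genuinely of order $\Delta t$ in the squared $L^2(\Omega,m)$ norm and, above all, that the stability constant $C_\theta$ does not deteriorate as the mesh is refined. This is precisely where the restriction $\frac{1}{2}\leq\theta\leq 1$ is essential: for $0\leq\theta<\frac{1}{2}$ unconditional stability fails and one would have to impose a CFL-type bound on $\Delta t$ tied to the smallest triangles, which is delicate because the family $\{T_{n,h}\}$ is \emph{not} quasi-uniform. The remaining technical point is to control the initial-data term $\left\|\frac{d u_{n,h}}{dt}(0)\right\|_{L^2(\Omega,m)}$ through the semi-discrete equation evaluated at $t=0$, which ties it back to $u_h^0$ and $f(0)$.
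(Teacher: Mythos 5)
Your proposal is correct and follows essentially the same route as the paper, which proves this result precisely by combining the semi-discrete estimate of Theorem \ref{stimaerrore} with the $\theta$-method error bound between $u_{n,h}(t_l)$ and $u^l_{n,h}$ obtained as in Theorem 6.1 of \cite{lancef}, under the unconditional stability guaranteed by $\frac{1}{2}\leq\theta\leq 1$. The only cosmetic discrepancy is that the crude inequality $(a+b)^2\leq 2a^2+2b^2$ introduces a factor $2$ in front of $\|u_0-u_h^0\|^2_{L^2(\Omega,m)}$, which is absorbed by the usual relabelling of constants.
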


\begin{oss} We point out that the norm $\left\|\frac{d u_{n,h}}{dt}(0)\right\|^2_{L^2(\Omega,m)}$ appearing in the above theorem can be estimated by $\|Au_0+f(0)\|^2_{L^2(\Omega,m)}$. Indeed, proceeding as in Remark 11.3.1 in \cite{quarval} with suitable changes, we take $u_h^0=\Pi^k_{1,h}(u_0)$, where $\Pi^k_{1,h}$ is the "elliptic projection operator". Hence, taking into account \eqref{corr}, we get
\begin{center}
$\displaystyle\left\|\frac{d u_{n,h}}{dt}(0)\right\|^2_{L^2(\Omega,m)}=\left(Au_0+f(0),\frac{d u_{n,h}}{dt}(0)\right)_{L^2(\Omega,m)}$,
\end{center}
and the thesis follows from Cauchy-Schwarz inequality.
\end{oss}

\section{Numerical results and conclusions}\label{sec6}
\setcounter{equation}{0}

\noindent In this section we present some numerical results concerning the transmission problem defined at the end of Section \ref{sec4}. We consider the domain illustrated in Figure \ref{fig:domain}. A highly conductive prefractal interface $K_n = K_{n,a} \cup K_{n,b}$, delimiting a non-convex polygonal domain $\Omega_1$, is placed at the center of a square domain $\Omega_2$ to study the heat transmission across the prefractal. In order to appreciate its role in the transmission process, we consider the nonlocal term $\theta_2(u)$ active only on the portion $K_{n,a}$ of the prefractal interface (in red in the figure). Defining symmetric conditions with respect to the geometry of the problem (in terms of boundary conditions and heat sources), we will be able to compare the heat flux that crosses the interface where the nonlocal term is present with the heat flux that crosses the interface where the nonlocal term is not active, and evaluate, numerically, the influence of the nonlocal term in the heat exchange process.

\begin{figure}
    \centerline{\includegraphics[scale=.5]{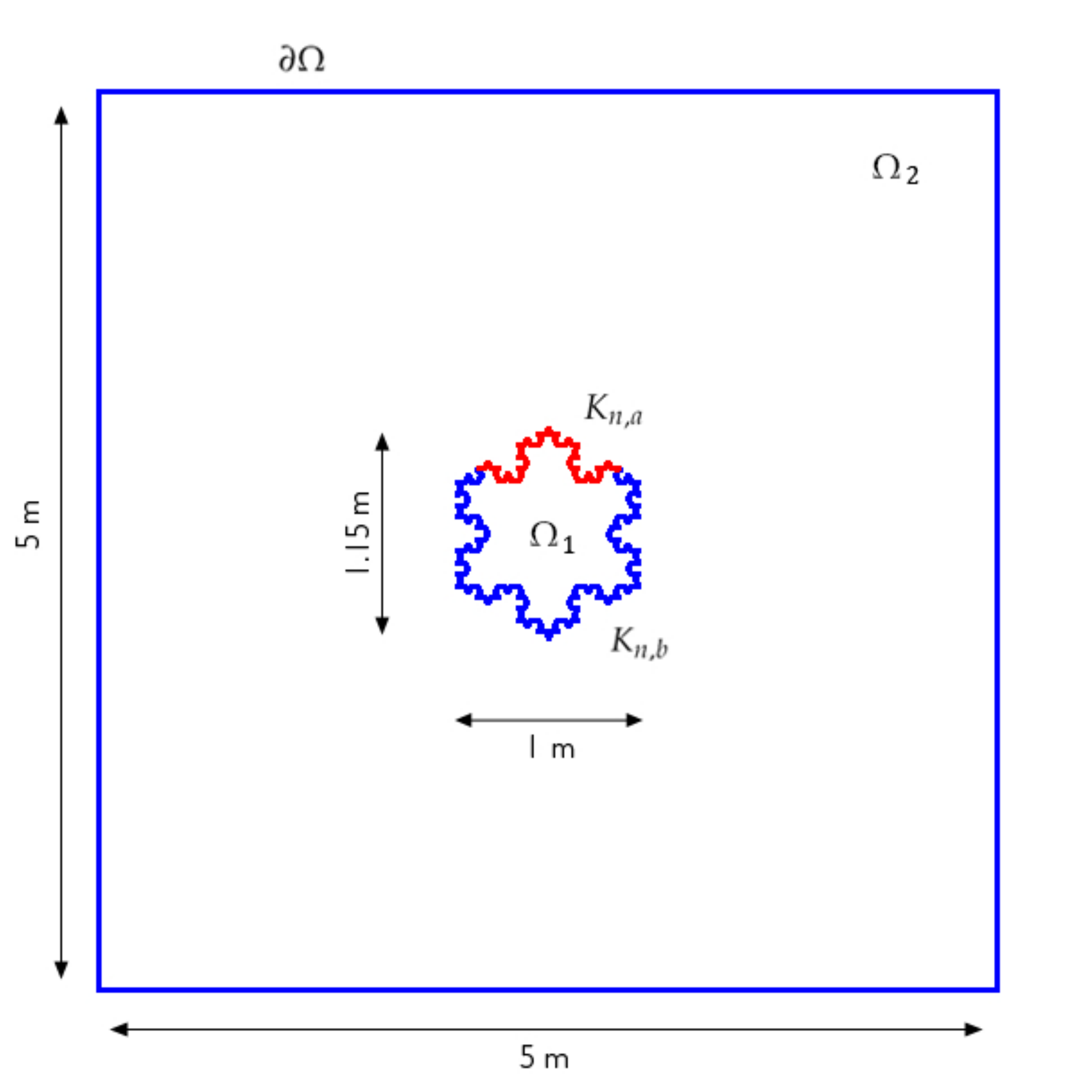}}
	\caption{The domain of the problem.}
	\label{fig:domain}
\end{figure}
\noindent

\noindent The dimensional equations of the problem are:
\begin{center}
$\begin{cases}
\rho \ C_p \ \frac{du}{dt}=k \ \Delta u + f & \text{in $L^2(\Omega)$,}\\[2mm]
\rho_s  \ C_{p,s} \ \frac{du}{dt}=-(\nu_1 \ k_1 \ \nabla u_1 + \nu_2 \ k_2 \ \nabla u_2) + k_s \ (\Delta_{K_{n,a}} u -\theta_2(u)) +f\quad &\text{in $L^2 (K_{n,a})$},\\[2mm]
\rho_s \ C_{p,s} \ \frac{du}{dt}=-(\nu_1 \ k_1 \ \nabla u_1 + \nu_2 \ k_2 \ \nabla u_2) + k_s \ \Delta_{K_{n,b}} u +f\quad &\text{in $L^2 (K_{n,b})$},\\[2mm]
u(0,x)=u_0(x) &\forall\,x \in\overline\Omega, \\[2mm]
u(t,x)=0 & \forall\,t, \forall\,x \in \partial \Omega
\end{cases}$
\end{center}
where
\begin{itemize}
\item $\Omega = \Omega_1 \cup \Omega_2$;
\item $\rho$ is the material density in the bulk domain $\Omega$ (in Kg/m$^3$);
\item $\rho_s$ is the material density per meter in the boundary domain $K_{n}$ (in Kg/m$^2$);
\item $C_p$ and $C_{p,s}$ are the heat capacity at constant pressure (in J/(Kg $\cdot$ K));
\item $k$ is the thermal conductivity in $\Omega$ (in W/(m $\cdot$ K)); $k_1 = k|_{\Omega_1}$ and $k_2 = k|_{\Omega_2}$;
\item $k_s$ is the thermal conductivity per meter in $K_n$ (in  W/K);
\item $\nu_1$ and $\nu_2$ are the outwards normal vectors on $K_n$ for $\Omega_1$ and $\Omega_2$ respectively;
\item the term $f$ represents a thermal source (in W/m$^3$);
\item $u$ is the unknown variable: the temperature in Kelvin degrees; \\$u_1 = u|_{\Omega_1}$ and $u_2 = u|_{\Omega_2}$.
\end{itemize}

\noindent The term $b u$ which appears in the equations of the problem defined in Section \ref{sec4} has been omitted here ($b=0$) to emphasize the role of the nonlocal term $\theta_2(u)$ in the transmission problem. The operator $\theta_2(u)$ is defined by the duality pairing between $H^{-\frac{1}{2}} (K_n)$ and $H^{\frac{1}{2}} (K_n)$. For every $u,v \in H^{\frac{1}{2}} (K_n)$, we define
\begin{center}
$\displaystyle\langle\theta_2 (u),v\rangle_{H^{-\frac{1}{2}} (K_n), H^{\frac{1}{2}} (K_n)}=\iint_{K_n\times K_n}\frac{(u(x)-u(y))(v(x)-v(y))}{|x-y|^2}\,ds(x)\,ds(y)$.
\end{center}

\noindent Observe that the term $\langle\theta_2 (u),v\rangle$ can be rewritten in the following way:
\begin{center}
$\displaystyle\langle\theta_2(u),v\rangle=\iint_{K_n\times K_n}\frac{(u(x)-u(y))v(x)}{|x-y|^2}\,ds(x)\,ds(y)-\iint_{K_n\times K_n}\frac{(u(x)-u(y))v(y)}{|x-y|^2}\,ds(x)\,ds(y)=\iint_{K_n\times K_n}\frac{(u(x)-u(y))v(x)}{|x-y|^2}\,ds(x)\,ds(y)+\iint_{K_n\times K_n}\frac{(u(y)-u(x))v(y)}{|x-y|^2}\,ds(x)\,ds(y)=2\iint_{K_n\times K_n}\frac{(u(x)-u(y))v(x)}{|x-y|^2}\,ds(x)\,ds(y)=2\int_{K_n} (Iu)(x)v(x)\,ds(x)$,
\end{center}
where $\displaystyle(Iu)(x):=\int_{K_n}\frac{(u(x)-u(y))}{|x-y|^2}\,ds(y)$. The last expression has been exploited for the implementation of the problem in the weak form. The simulations have been performed on Comsol V.3.5a, on a desktop computer with a quad-core Intel processor (i5-2320) running at 3.00 GHz
and equipped with 8 GB RAM.

\noindent Table \ref{table:physical_coeff} shows the numerical values used for the parameters above defined.

\begin{table}[h]
\begin{center}
\begin{tabular}{ c | c | c | c | c }
  $\rho$ & $\rho_s$ & $C_p$ & $C_{p,s}$ & $k_s$ \\
  \hline			
  8000 & 21000 & 450 & 150 & $10^6$
\end{tabular}
\caption{Numerical values used in the simulations for the physical coefficients.}
\label{table:physical_coeff}
\end{center}
\end{table}

\noindent Instead, the thermal conductivity $k$ has been defined variable in $\Omega$ as shown in Figure \ref{fig:domain_conductivity_def}. The domain has been ideally divided into eight sectors. The thermal conductivity is constant within each sector and variable from one sector to the subsequent one, so as to have alternations of very low values ($k = 1$: isolating material) and high values ($k = 1000$: good conductive material) between adjacent sectors.

\noindent The alternation of high and low values of the thermal conductivity in adjacent sectors separated by the prefractal layer is used to force the heat flow \emph{along} the prefractal on the east and west parts of the barrier, and \emph{through} the barrier on the north and south parts.

\begin{figure}
    \centerline{\includegraphics[scale=.35]{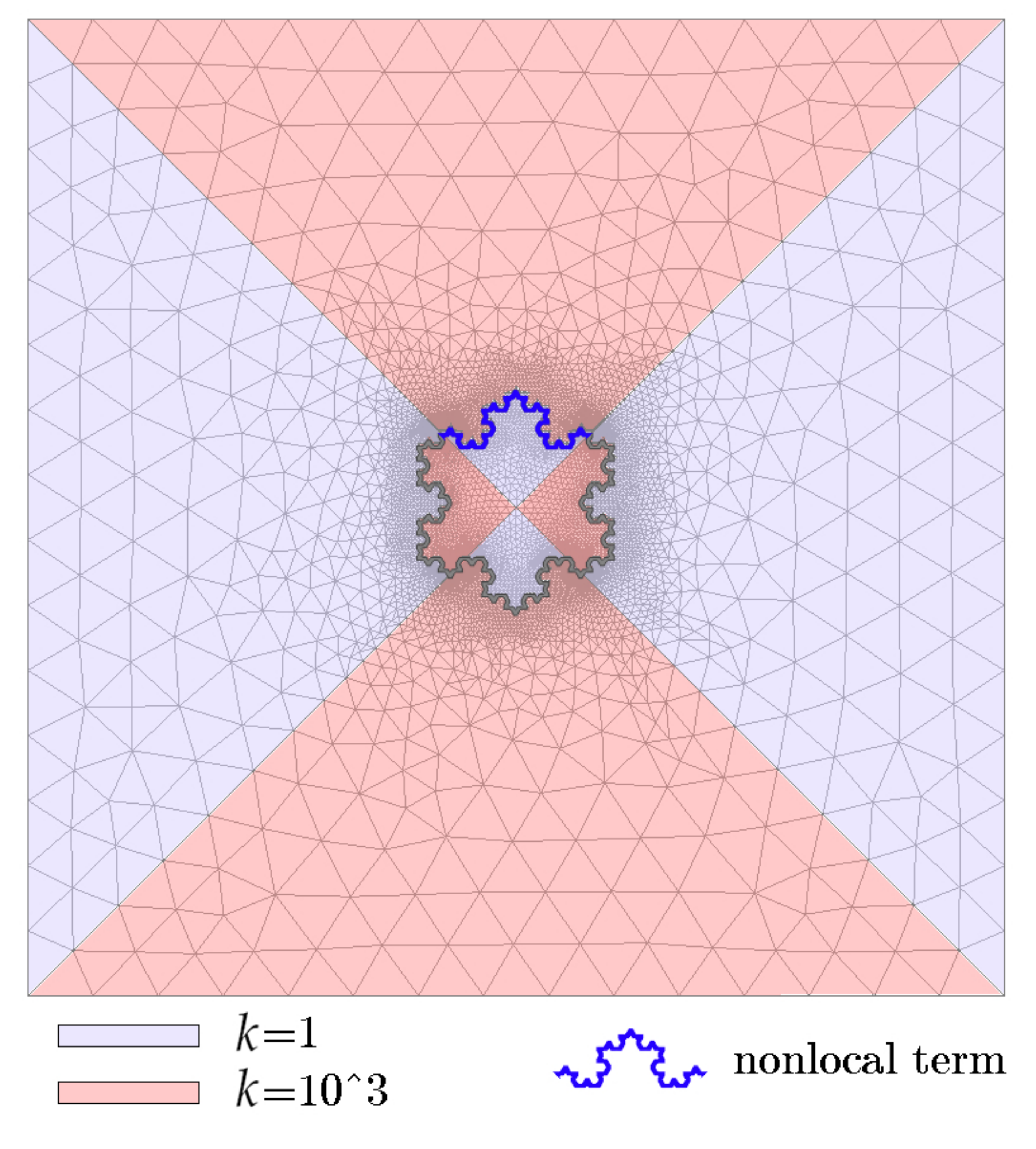}}
	\caption{Definition of the thermal conductivity in the bulk domain $\Omega$.}
	\label{fig:domain_conductivity_def}
\end{figure}
\noindent

\noindent The thermal source is defined as a 2D gaussian curve with a very low variance, in order to represent a flame concentrated at the center of $\Omega_1$:
\[ f(x) = 10^5 \ e^{-0.5 ((x_1-\bar{x}_1)^2+(x_2-\bar{x}_2)^2)/0.001}
\]
where $(\bar{x}_1, \bar{x}_2)$ are the coordinates of the center of $\Omega_1$. Figure \ref{fig:source_term} shows a 3D representation of the source term.

\begin{figure}
    \centerline{\includegraphics[scale=.9]{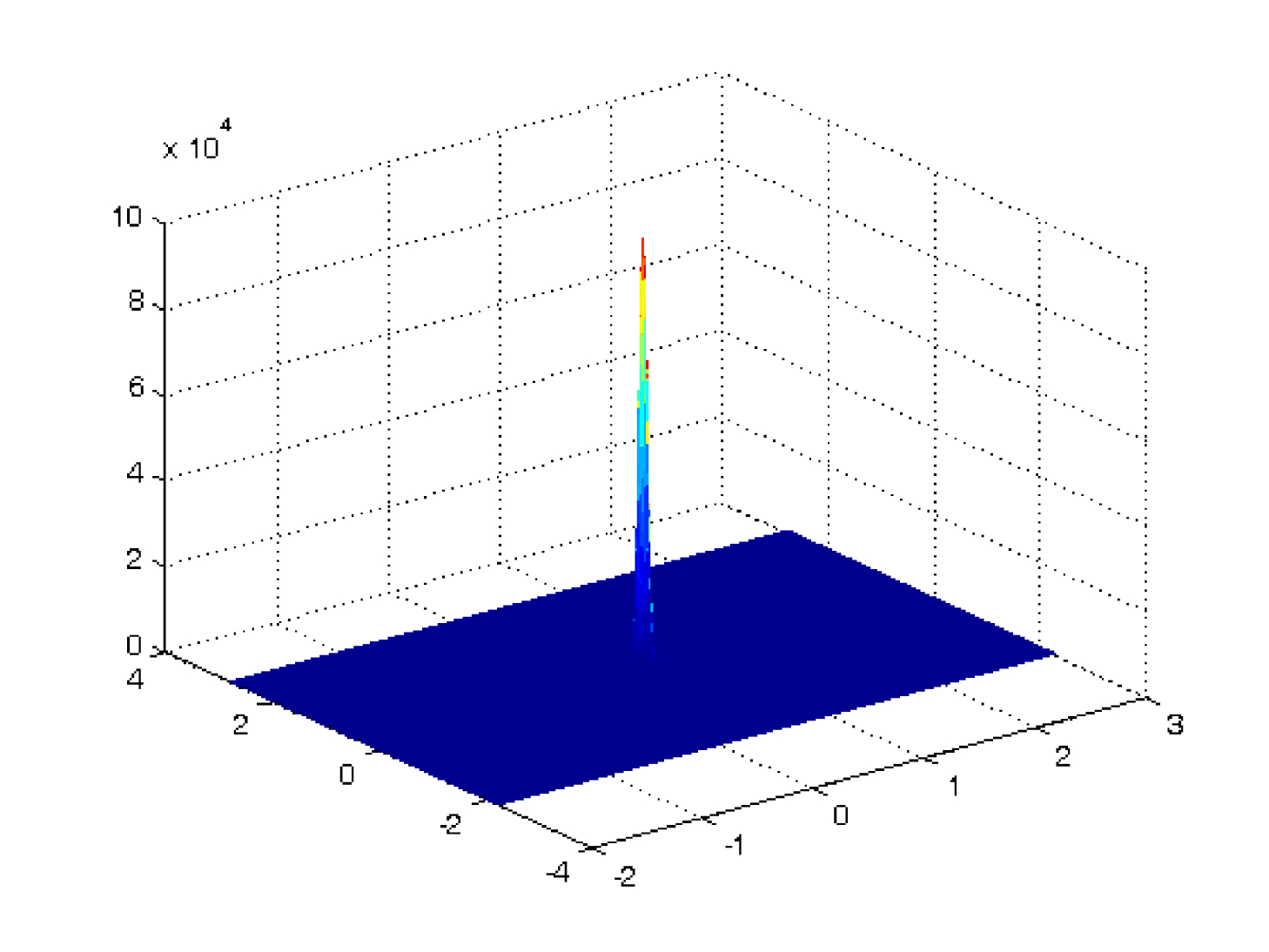}}
	\caption{The source term $f(x)$.}
	\label{fig:source_term}
\end{figure}
\noindent

\noindent Taking into account our choices on the location of the source term and the boundary conditions, the heat flows from the center of $\Omega_1$ (where the heat source has the maximum) towards the domain $\Omega_2$ and reaches the boundary $\partial \Omega$ where the temperature is kept constant at 0 (Dirichlet conditions). In the east and west sectors in $\Omega_1$, the heat produced by the source travels in the domain pushed by a high conductivity and reaches the prefractal barrier along the shortest possible path (ideally a straight line, which in the simulation takes the form of a slightly curved line because of numerical errors induced by the finite triangulation of the domain). As the barrier is reached, only a small part of the heat passes through it, because on the other side of the barrier there are two low conductivity areas that are holding the thermal flow.
The heat mainly flows \emph{along} the barrier (which is by assumption a highly conductive layer) until it reaches the north and south sectors, where, beyond the barrier there are again high conductivity areas.

\noindent Summarizing, the heat moves from the center of the domain $\Omega$ to the boundary $\partial \Omega$, and crosses the fractal layer mainly on the north and south sectors. The difference of the flow entity along these two main directions is due the role of the nonlocal term. The numerical simulations confirm that the nonlocal term is responsible for a larger flux across the barrier in the north sector.

\noindent Figure \ref{fig:streamlines} shows the main streamlines of the heat flux for the stationary solution.

\begin{figure}
    \centerline{\includegraphics[scale=.55]{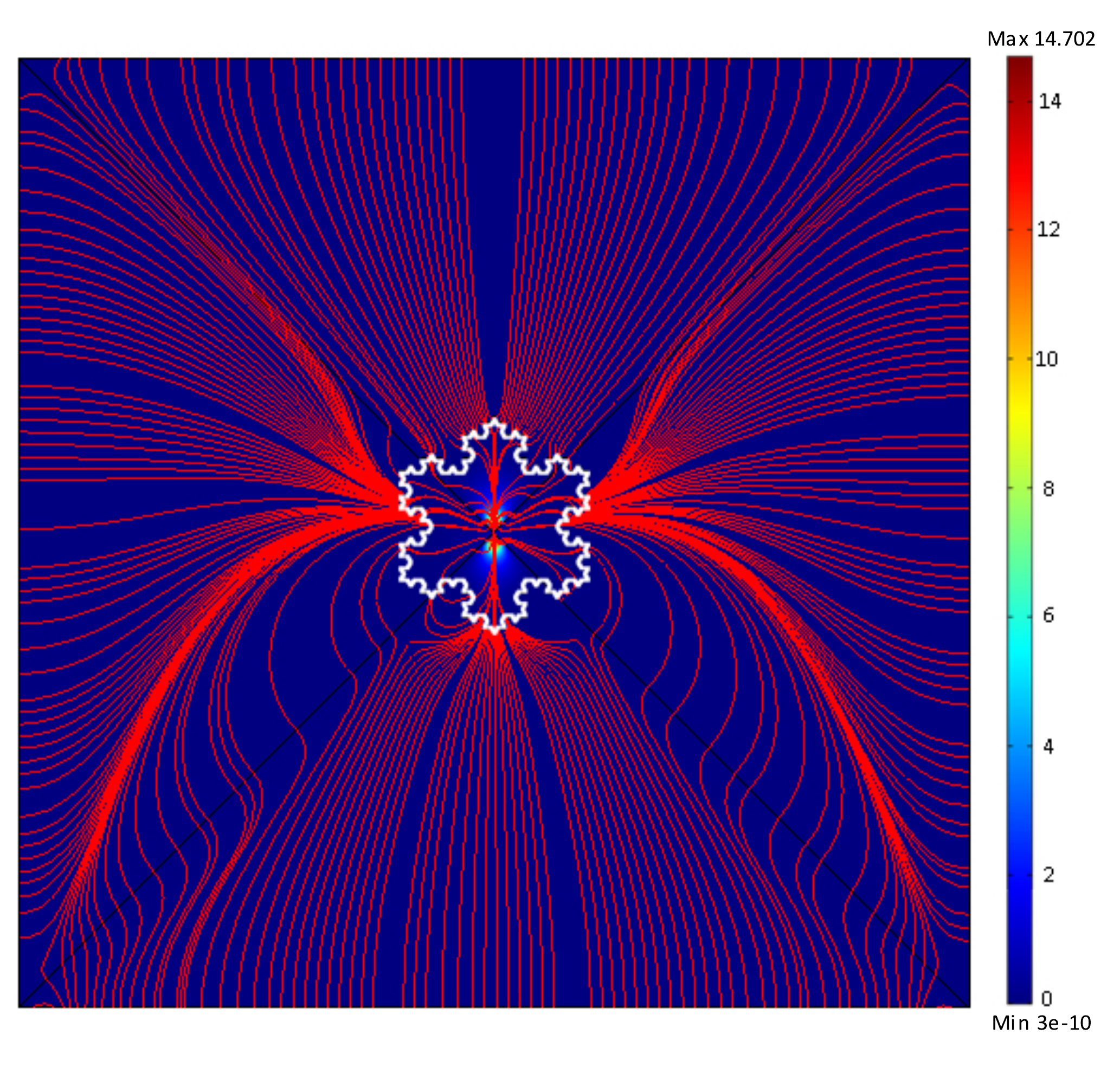}}
	\caption{Heat flux streamlines at the stationary condition.}
	\label{fig:streamlines}
\end{figure}
\noindent

\begin{figure}
    \centerline{\includegraphics[scale=.35]{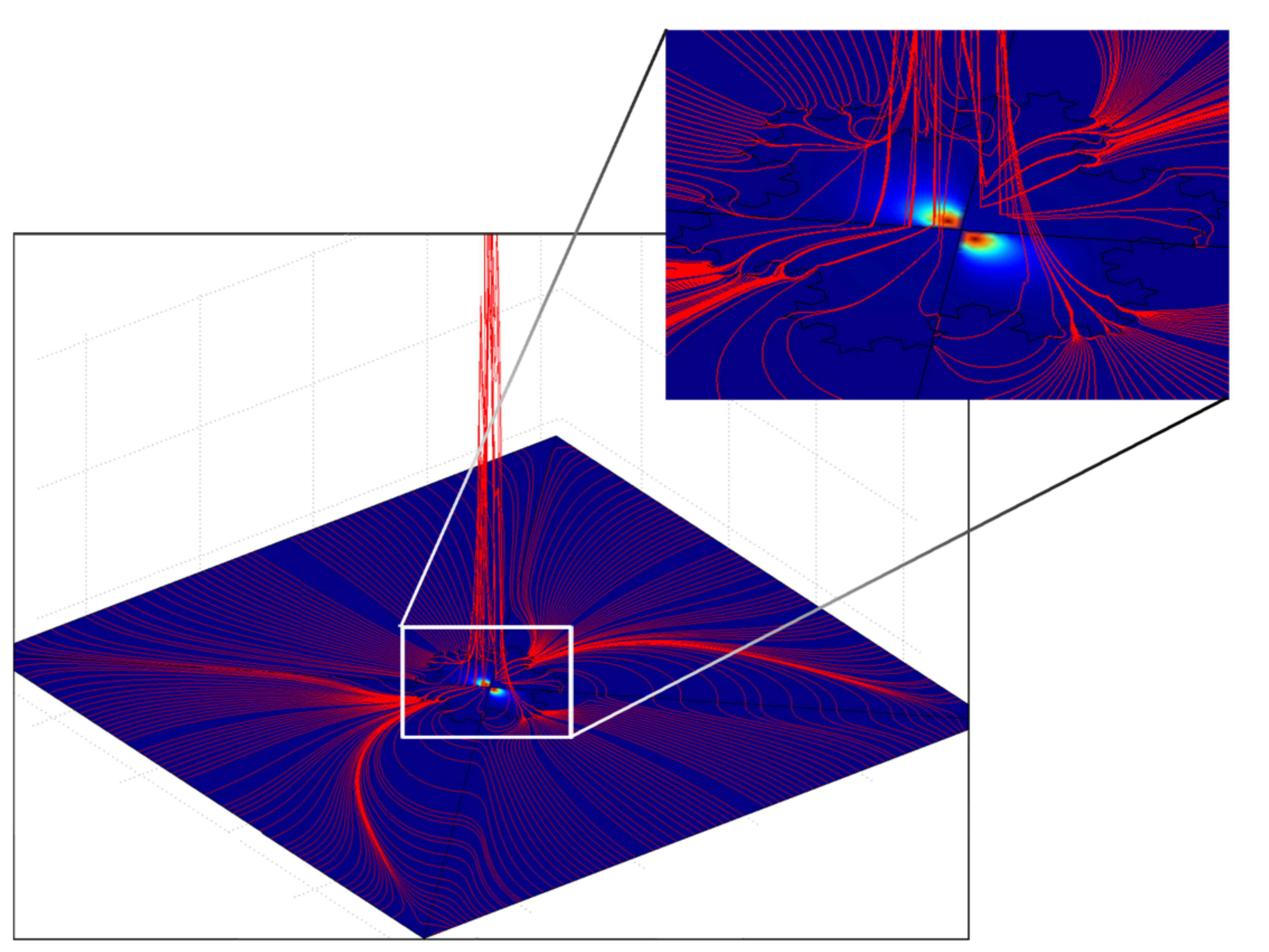}}
	\caption{A 3D representation of the heat flux streamlines. The height of the curves is proportional to the amplitude of the heat flux.}
	\label{fig:streamlines3D}
\end{figure}
\noindent

\noindent The streamlines have been drawn with a density in the domain proportional to the magnitude of the vector field to which they are tangent. Observe that the prefractal is crossed by much more lines in the north sector than everywhere else. This means that the amplitude of the heat flux that crosses the barrier in the north sector is much higher than in other sectors, and this is due to the presence of the nonlocal term.

\noindent Figure \ref{fig:streamlines3D} shows a three-dimensional representation of the same streamlines. The height of the curves is proportional to the magnitude of the heat flow. This figure confirms that the heat flux across the barrier in the east and west sectors is negligible (the corresponding curves are almost completely flat). Most of the flux across the barrier takes place in the north and south sectors. But the former is populated by much more lines, in virtue of the fact that the nonlocal term acting in the north part of the prefractal is responsible of a larger heat flux across the barrier.

\noindent We conclude by noticing that the same results may be obtained also defining the problem on different domains. The Koch curve could be replaced by a more general symmetric prefractal of any order, or even by a prefractal of mixture type \cite{haodong}. As already pointed out in \cite{lancef}, the fractal geometry helps to achieve a larger heat flux across the barrier. Our experimental results suggest that by drawing a prefractal barrier of a proper material characterized by non-constant heat conductivity (which may be described by the nonlocal term $\theta_2(u)$) one could obtain a highly conductive layer with increased capability to drain the heat.

\bigskip

\noindent {\bf Acknowledgements.} The authors have been supported by the Gruppo
Nazionale per l'Analisi Matematica, la Probabilit\`a e le loro
Applicazioni (GNAMPA) of the Istituto Nazionale di Alta Matematica
(INdAM).

\medskip

\end{document}